\newtheorem{thm}{Theorem}
\newtheorem{la}[thm]{Lemma}
\newtheorem{mainlem}[thm]{Main Lemma}
\newtheorem{cor}[thm]{Corollary}
\newtheorem{sclaim}{Claim}[thm]
\theoremstyle{definition}
\newtheorem{df}[thm]{Definition}
\newtheorem{que}[thm]{Question}
\theoremstyle{remark}
\newenvironment{ls}{\begin{itemize}}{\end{itemize}}
\newenvironment{pf}{\begin{proof}}{\end{proof}}
\newcommand{\ger}[1]{\ensuremath{\mathfrak {#1}}}
\newcommand{\scr}[1]{\ensuremath{\mathcal {#1}}}
\newcommand{\bld}[1]{\ensuremath{\mathbf {#1}}}
\newcommand{\bbb}[1]{\ensuremath{\mathbb {#1}}}
\renewcommand{\phi}{\varphi}
\newcommand{\set}[2]{\ensuremath{\{#1:#2\}}}
\newcommand{\notarrow}{\kern .42em\not\kern -.42em\longrightarrow}
\renewcommand{\th}{\ensuremath{{}^{\text{th}}}}
\newcommand{\rr}{\ensuremath{\ger{rr}}}
\newcommand{\rrf}{\ensuremath{\ger{rr}_f}}
\newcommand{\rri}{\ensuremath{\ger{rr}_i}}
\newcommand{\rro}{\ensuremath{\ger{rr}_o}}
\newcommand{\ssi}{\ensuremath{\ss_i}}
\newcommand{\sso}{\ensuremath{\ss_{o}}}
\newcommand{\salmost}{\ensuremath{\ger s_{\mathrm{almost}}}}
\newcommand{\stotal}{\ensuremath{\ger s_{\mathrm{total}}}}
\newcommand{\sr}{\ensuremath{\ger{sr}}}
\newcommand{\sri}{\ensuremath{\sr_i}}
\newcommand{\sro}{\ensuremath{\sr_{o}}}
\newcommand{\cov}[1]{\ensuremath{\bld{cov}(\scr{#1})}}
\newcommand{\non}[1]{\ensuremath{\bld{non}(\scr{#1})}}
\newcommand{\N}{\mathbb N}
\newcommand{\LL}{\mathbb L}
\newcommand{\stem}{{\mathrm{stem}}}
\newcommand{\sub}{\subseteq}
\newcommand{\ha}{\,{}\hat{}\,}
\newcommand{\sem}{\setminus}
\newcommand{\omoms}{[\omega]^\omega}
\newcommand{\forces}{\Vdash}
\newcommand{\omlom}{\omega^{<\omega}}
\newcommand{\rk}{{\mathrm{rk}}}
\newcommand{\re}{{\upharpoonright}}
\renewcommand{\P}{{\mathcal P}}
\newcommand{\PP}{{\mathbb P}}
\newcommand{\QQ}{{\mathbb{Q}}}
\newcommand{\B}{{\mathcal B}}
\newcommand{\omom}{\omega^\omega}
\renewcommand{\succ}{\mathrm{succ}}
\newcommand{\noprint}[1]{\relax}
\title{The Subseries Number}
\author[Brendle]{J\"org Brendle}
\address[J.~Brendle]{Graduate School of System Informatics, Kobe
  University, 1--1   Rokkodai, Nada-ku, 657-8501 Kobe,  Japan} 
\email{brendle@kobe-u.ac.jp}
\author[Brian]{Will Brian}
\address[W.~Brian]{Department of Mathematics and Statistics, University of North Carolina at Charlotte, 9201 University City Blvd., Charlotte, NC 28223-0001, U.S.A.}
\email{wbrian.math@gmail.com}
\urladdr{http://wrbrian.wordpress.com}
\author[Hamkins]{Joel David Hamkins}
\address[J.~D.~Hamkins]{Mathematics, The Graduate Center of the City
  Univeristy of New York, 365 Fifth Avenue, New York, NY 10016,
  U.S.A. and Mathematics, College of Staten Island of CUNY, Staten
  Island, NY 10314, U.S.A.}
\email{jhamkins@gc.cuny.edu}
\urladdr{http://jdh.hamkins.org}
\thanks{The authors wish to thank Jonathan Verner for
  several insightful discussions about the present topic, and Andreas Blass for his comments on an early version of this paper. 
}
\begin{document}

\begin{abstract}

Every conditionally convergent series of real numbers has a divergent subseries.
  How many subsets of the natural numbers are needed so that
  every conditionally convergent series diverges on the subseries corresponding to one of these sets? 
  The answer to this question is defined to be the subseries number, a new cardinal characteristic of the continuum. This cardinal is bounded below by $\aleph_1$ and above by the cardinality of the continuum, but it is not provably equal to either. We define three natural variants of the subseries number, and compare them with each other, with their corresponding rearrangement numbers, and with several well-studied cardinal characteristics of the continuum. Many consistency results are obtained from these comparisons, and we obtain another by computing the value of the subseries number in the Laver model.
\end{abstract}

\maketitle

\section{Introduction}          \label{intro}

Let $\sum_{n \in \N} a_n$ be a convergent series of real numbers. The series is conditionally convergent if, and only if, there is some $A \subseteq \N$ such that the subseries $\sum_{n \in A}a_n$ is no longer convergent. In other words, conditionally convergent series always admit divergent subseries, and this property characterizes exactly those convergent series that converge conditionally.

Given a convergent series $\sum_{n \in \N}a_n$, one may view each $A \subseteq \N$ as a test of whether the convergence of the series is conditional: if $\sum_{n \in A}a_n$ converges then the series has passed the test, and we learn nothing of whether its convergence is conditional or absolute, but if $\sum_{n \in A}a_n$ diverges then our test has revealed that the original series is only conditionally convergent. How large a battery of tests of this kind do we need so that every conditionally convergent series is revealed to be conditionally convergent by one of these tests? 

This paper explores this question, along with several related questions.  We begin, in
Section~\ref{sec:defs}, by defining the \emph{subseries number}, the
minimal cardinality of a family $\mathcal A$ of subsets of $\N$ needed to ensure that for every conditionally
convergent series $\sum_{n \in \N}a_n$, the subseries $\sum_{n \in A}a_n$ diverges for some $A \in \mathcal A$. We will define two other cardinal numbers in a similar fashion, by requiring the subseries in question to diverge in some particular way, whether by increasing or decreasing without bound, or by oscillation.

The subseries numbers are related to the rearrangement numbers, which were defined and explored in \cite{Hardy} and \cite{rearrangement}. In the present work, we will see how the subseries numbers relate to the rearrangment numbers, and we will find bounds for the subseries numbers in terms of other classical cardinal characteristics. We will use these bounds to separate, in some cases, the subseries numbers from each other, from their corresponding rearrangement numbers, from classical cardinal characteristics of the continuum, and from $\ger c$.

In the next section we will define the three subseries numbers, prove some basic facts about them, and summarize the results of this paper.

\section{Definitions, basic facts, and a summary of results} \label{sec:defs} 

We denote the \emph{subseries number} by \ss, defined as follows.

\begin{df}      \label{df-ss}
  $\ss$ is the smallest cardinality of any family $\mathcal A$ of
      subsets of \bbb N such that, for every conditionally
      convergent series $\sum_{n \in \N}a_n$ of real numbers, there is some
      $A \in \mathcal A$ such that the subseries $\sum_{n \in A}a_n$ diverges.
\end{df}

In this definition, the subseries might diverge to $+\infty$ or to
$-\infty$, or it might diverge by oscillation.  If we specify one of
these options, we get more specific subseries numbers.

\begin{df}      \label{df-rrsub}
\hskip1pt
\begin{ls}
\item $\ssi$ is defined like \ss\ except that $\sum_{n \in A}a_n$ is
    required to diverge to $\infty$ or to $-\infty$.
\item \sso\ is defined like \ss\ except that $\sum_{n \in A}a_n$ is
    required to diverge by oscillation.
\end{ls}
\end{df}


The definition of $\ss$ was first suggested by the third author on MathOverflow \cite{Hamkins}. 
For convenience, let us recall here the definitions of two of the rearrangement numbers as well:

\begin{df}      \label{df-rr}
$\ $
\begin{ls}
  \item   \rr\ is the smallest cardinality of any family $\mathcal C$ of
      permutations of \bbb N such that, for every conditionally
      convergent series $\sum_na_n$ of real numbers, there is some
      permutation $p\in \mathcal C$ for which the rearrangement
      $\sum_na_{p(n)}$ no longer converges to the same limit.
\item \rri\ is defined like \rr\ except that $\sum_na_{p(n)}$ is
    required to diverge to $+\infty$ or to $-\infty$.
\end{ls}
\end{df}

Observe that the subseries numbers $\ss$ and $\ssi$ were defined in deliberate analogy with their corresponding rearrangement numbers, $\rr$ and $\rri$. The definition that seems to be missing, that of a rearrangement number $\rro$ analogous to $\sso$, was given in \cite{rearrangement}, but it was quickly proven that $\rr = \rro$, making the definition of $\rro$ redundant. The analogous equality does not seem to hold for the subseries numbers, so we will need to treat $\ss$ and $\sso$ separately. On the other hand, when considering permutations of terms rather than subseries as in the definitions of the rearrangement numbers, we had yet another option for how a permutation $p$ can reveal a series $\sum_{n \in \N}a_n$ to be conditionally convergent: that permuting terms makes the rearranged series $\sum_{n \in \N}a_{p(n)}$ converge to a different finite value. The version of the rearrangement number corresponding to this type of series disruption, namely $\rrf$, does not have a clear analog as a subseries number. 

If $\mathcal A$ is a family of subsets of $\N$ witnessing that every conditionally convergent series has a subseries  going to $\pm\infty$, then it automatically witnesses that every conditionally convergent series has a divergent subseries. This simple observation shows that $\ss \leq \ssi$, and a similar observation shows $\ss \leq \sso$:

\begin{thm}
$\ss \leq \ssi$ and $\ss \leq \sso$.
\end{thm}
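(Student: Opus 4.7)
The plan is to observe that both inequalities follow from the fact that a witness for the stronger property is automatically a witness for the weaker property, since divergence to $\pm\infty$ and divergence by oscillation are both special cases of divergence.

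Concretely, for the first inequality, I would take any family $\mathcal{A}$ of subsets of $\N$ witnessing the defining property of $\ssi$, that is, such that for every conditionally convergent series $\sum_{n\in\N}a_n$ there exists $A\in\mathcal{A}$ with $\sum_{n\in A}a_n$ diverging to $+\infty$ or to $-\infty$. Since divergence to $\pm\infty$ is in particular divergence, the same family $\mathcal{A}$ witnesses the defining property of $\ss$. Therefore $\ss\leq |\mathcal{A}|$, and taking the infimum over all such $\mathcal{A}$ yields $\ss\leq\ssi$.

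For the second inequality, I would run the identical argument with $\sso$ in place of $\ssi$: a family $\mathcal{A}$ witnessing that every conditionally convergent series has some subseries indexed by an element of $\mathcal{A}$ diverging by oscillation also witnesses that every conditionally convergent series has a divergent subseries indexed by an element of $\mathcal{A}$, because divergence by oscillation is a form of divergence. Hence $\ss\leq\sso$.

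There is no real obstacle here; the only thing to verify is that the definitions $\ssi$ and $\sso$ are well-defined cardinals at all, that is, that witnessing families exist. This is guaranteed because every conditionally convergent series actually does admit a subseries diverging to $+\infty$ (and one diverging by oscillation), so taking $\mathcal{A}=\P(\N)$ always works; hence both $\ssi$ and $\sso$ are defined and at most $\ger{c}$, and the comparison above is meaningful.
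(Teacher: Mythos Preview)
Your argument is correct and matches the paper's own reasoning exactly: the paper simply observes that a family witnessing divergence to $\pm\infty$ (respectively, by oscillation) automatically witnesses divergence, and states the theorem without further proof.
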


Another simple observation is that all of our subseries numbers are at most $\ger c$, the cardinality of the continuum.

\begin{thm}
$\ssi \leq \ger c$ and $\sso \leq \ger c$. Consequently, $\ss \leq \ger c$ as well.
\end{thm}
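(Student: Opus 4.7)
The plan is to take $\mathcal A = \P(\N)$, which has cardinality $\ger c$, and show that this single family witnesses both $\ssi \leq \ger c$ and $\sso \leq \ger c$. Since $\P(\N)$ contains every subset of $\N$, the task reduces to the following: for each conditionally convergent series $\sum_n a_n$, produce (a) \emph{some} $A \subseteq \N$ with $\sum_{n \in A} a_n$ diverging to $\pm\infty$, and (b) \emph{some} $A \subseteq \N$ with $\sum_{n \in A} a_n$ diverging by oscillation. The bound $\ss \leq \ger c$ will then follow immediately from the preceding theorem $\ss \leq \ssi$.

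For (a) I would invoke the classical fact that for any conditionally convergent series $\sum_n a_n$, the positive-part subseries $\sum_{n : a_n > 0} a_n$ equals $+\infty$ and the negative-part subseries equals $-\infty$. Hence $A := \{n \in \N : a_n > 0\}$ already works. For (b) the construction is a straightforward variant of the idea behind Riemann's rearrangement theorem, applied to subseries rather than rearrangements. Let $P = \{n : a_n > 0\}$ and $N = \{n : a_n < 0\}$, both enumerated in increasing order. Using that $\sum_P a_n = +\infty$ and $\sum_N a_n = -\infty$, I would recursively choose alternating finite blocks: a finite initial segment of the unused elements of $P$ just long enough to push the running partial sum of $\sum_{n \in A} a_n$ above $+1$, then a finite initial segment of the unused elements of $N$ just long enough to push the running partial sum below $-1$, and so on indefinitely. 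The resulting $A$ is a subset of $\N$ whose partial sums cross $+1$ and $-1$ infinitely often, so $\sum_{n \in A} a_n$ diverges by oscillation.

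There is no real obstacle here: both subclaims are immediate from standard facts about conditional convergence, and the only mildly delicate point is confirming that the greedy block construction for (b) never gets stuck, which is guaranteed by the divergence of the positive and negative parts. The argument does not even need $a_n \to 0$, although of course that holds since the original series converges.
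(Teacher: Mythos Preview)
Your argument for part (a) is fine and matches the paper's. Your argument for part (b), however, has a genuine gap.

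In a subseries $\sum_{n\in A}a_n$ the terms appear in the natural order of their indices, not in the order in which you select them. Your greedy construction always takes an \emph{initial segment of the unused} elements of $P$, then an initial segment of the unused elements of $N$, and so on. Since each stage consumes at least one new index and there are infinitely many stages of each parity, every element of $P$ and every element of $N$ is eventually used. Hence the resulting set is $A=P\cup N=\{n:a_n\neq 0\}$, and the subseries $\sum_{n\in A}a_n$ is term-for-term the original convergent series (with the zero terms removed). The ``running partial sums'' you track during the construction are the partial sums of a particular \emph{rearrangement} of the series, not of the subseries $\sum_{n\in A}a_n$; you have reproduced the Riemann rearrangement construction rather than a subseries construction.

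The easy fix, and what the paper sketches, is to build $A$ interval by interval rather than by initial segments of $P$ and $N$: choose $0=M_0<M_1<M_2<\cdots$ and set $A\cap(M_{2j-2},M_{2j-1}]=P\cap(M_{2j-2},M_{2j-1}]$ and $A\cap(M_{2j-1},M_{2j}]=N\cap(M_{2j-1},M_{2j}]$, picking each $M_i$ large enough that the partial sum at $M_i$ crosses the desired threshold. Now the opposite-sign terms in each interval are genuinely discarded, and the partial sums of $\sum_{n\in A}a_n$ in natural order really do oscillate.
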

\begin{pf}
The ``consequently'' part follows from the first part and the previous theorem.

Every conditionally convergent series has a subseries diverging to $\infty$ (for example, the sum of its positive terms). Thus $\mathcal P(\N)$ is a family of sets with the properties required in the definition of $\ssi$, and it follows that $\ssi \leq \ger c$.

Similarly, every conditionally convergent series has a subseries diverging by oscillation (such a subseries can be found by interleaving long stretches of negative terms with long stretches of positive terms). Thus $\mathcal P(\N)$ is a family of sets with the properties required in the definition of $\sso$, and it follows that $\sso \leq \ger c$.
\end{pf}

In the next section we will show that $\ss$, hence all three of the subseries numbers, is uncountable. Thus these three numbers qualify as cardinal characteristics of the continuum in the sense of \cite{hdbk}.

We end this section with a summary of the results that we will prove in the subsequent sections. Most of these results can be (and are) stated as inequalities comparing the subseries numbers with other cardinal characteristics of the continuum. The definitions of these other cardinals will be stated as needed later in the paper. We refer the reader to \cite{hdbk} for a thorough treatment of all the classical cardinal characteristics mentioned here (and others), and how they relate to one another. In what follows we prove:
\begin{multicols}{2}
\begin{ls}
\item $\ss \geq \ger s$
\item $\ss \geq \cov L$
\item $\ssi \geq \cov M$
\item $\sso \leq \non M$
\item[$\circ$] $\rr \leq \max \{ \ger b, \ss \}$
\item[$\circ$] $\rri \leq \max \{ \ger d, \ssi \}$
\item $\sso \leq \max \{ \ger b, \ss \}$
\end{ls}
\end{multicols}

\begin{center}
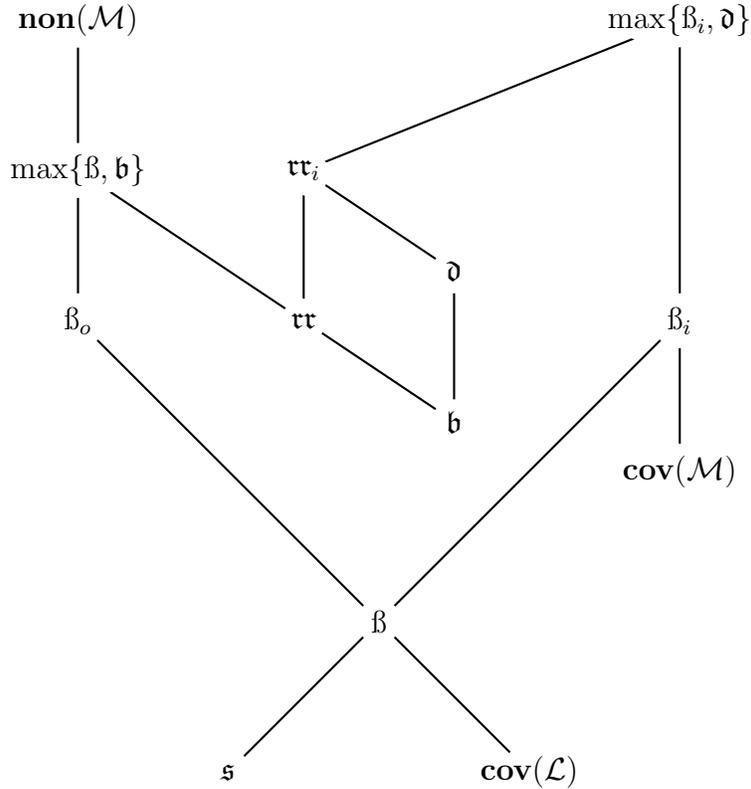
\begin{figure}
\begin{tikzpicture}[style=thick, xscale=2,yscale=2]

\draw (2,1) -- (3,2);
\draw (4,1) -- (3,2);
\draw (3,2) -- (1,4);
\draw (3,2) -- (5,4);
\draw (5,3) -- (5,4);
\draw (1,4) -- (1,5);
\draw (2.5,4) -- (2.5,5);
\draw (2.5,4) -- (1,5);
\draw (5,4) -- (5,6);
\draw (1,5) -- (1,6);
\draw (2.5,5) -- (5,6);

\draw (3.5,3.33) -- (2.5,4);
\draw (3.5,3.33) -- (3.5,4.33);
\draw (3.5,4.33) -- (2.5,5);

\draw [fill=white,white] (2,1) circle (4pt);  \node at (2,1) {$\ger s$};
\draw [fill=white,white] (4,1) circle (5pt);  \node at (4,1) {$\cov L$};
\draw [fill=white,white] (3,2) circle (4pt);  \node at (3,2) {$\ss$};
\draw [fill=white,white] (5,3) circle (5pt);  \node at (5,3) {$\cov M$};
\draw [fill=white,white] (1,4) circle (5pt);  \node at (1,4) {$\sso$};
\draw [fill=white,white] (5,4) circle (5pt);  \node at (5,4) {$\ssi$};
\draw [fill=white,white] (2.5,4) circle (4pt);  \node at (2.5,4) {$\rr$};
\draw [fill=white,white] (2.52,4.98) circle (4pt);  \node at (2.5,5) {$\rri$};
\draw [fill=white,white] (1,6) circle (5pt);  \node at (1,6) {$\non M$};
\draw [fill=white,white] (1,5) ellipse (10pt and 5pt);  \node at (1,5) {$\max \{\ss,\ger b\}$};
\draw [fill=white,white] (4.95,6) ellipse (10pt and 5pt);  \node at (5,6) {$\max \{\ssi,\ger d\}$};

\draw [fill=white,white] (3.5,3.33) circle (4pt);  \node at (3.5,3.33) {$\ger b$};
\draw [fill=white,white] (3.5,4.33) circle (4pt);  \node at (3.5,4.33) {$\ger d$};

\end{tikzpicture}
\caption{The subseries numbers compared to the rearrangement numbers and other small cardinals}
\end{figure}
\end{center}

Those inequalities we know to be consistently strict are marked with a filled-in circle. All the inequalities are summarized in visual form in a Hasse diagram in Figure 1. Not depicted in the diagram are the classical inequalities $\ger s \leq \ger d$ and $\cov M \leq \ger d$, which are proved in \cite{hdbk}, and the inequality $\cov L \leq \rr$, which was proved in \cite{rearrangement}.

By comparing these results with known facts about the random real model, the Cohen model, and the Mathias model, one may show easily that some of these inequalities are consistently strict. For example, in the random real model one has $\ger s = \aleph_1$ and $\cov L = \ger c$, so it follows that in this model we also have $\ger s < \ss$. These sorts of deductions will be made explicit in the relevant sections below. Of the four trivial inequalities mentioned above, namely 
\begin{multicols}{2}
\begin{ls}
\item $\ss \leq \ssi$
\item[$\circ$] $\ss \leq \sso$
\item[$\circ$] $\ssi \leq \ger c$
\item $\sso \leq \ger c$,
\end{ls}
\end{multicols}
\noindent we know that only two are not provably reversible (once again, it is the two marked with filled-in circles). In addition to these inequalities and the readily deduced consistency results that follow from them, we will also prove in Section~\ref{sec:Laver} that
\begin{ls}
\item consistently $\ss, \sso < \rr$.
\end{ls}
We prove this by showing this inequality holds in the Laver model. In fact, it is necessary only to prove that $\sso = \aleph_1$ in the Laver model, because $\ger b \leq \rr$, and it is well known that $\ger b = \ger c$ in the Laver model.

Our proof that $\sso=\aleph_1$ in the Laver model, presented in Section~\ref{sec:Laver}, is somewhat technical, and we expect that it will be accessible only to specialists who are intimately familiar with forcing arguments. The rest of the paper is intended to be accessible to a broader audience. No knowledge of forcing is required outside of Section~\ref{sec:Laver}. Some familiarity with Polish spaces and with classical cardinal characteristics of the continuum is likely to be helpful, but we do not assume the reader is an expert in these things.

\section{Padding with Zeros: $\ger s \leq \ss$} \label{pad1}

In this section, we obtain our first of two lower bounds for $\ss$ by showing that $\ger s \leq \ss$. It follows that $\ger s$ is a lower bound for all three of the subseries numbers. The main idea behind the proof is to begin with some conditionally convergent series, and then to produce a new conditionally convergent series by inserting a large number of zeros between consecutive terms of the original. This technique was introduced in \cite{rearrangement}, where it was used to show that $\ger b \leq \rr$. 

\begin{df}
Let $A$ and $B$ be infinite subsets of $\N$. $A$ is said to \emph{split} $B$ if both $B \cap A$ and $B \setminus A$ are infinite. The \emph{splitting number}, denoted $\ger s$, is the smallest cardinality of a family $\mathcal A$ of subsets of $\N$ such that every infinite set $B$ is split by some $A \in \mathcal A$.
\end{df}

\begin{thm}     \label{aboves:thm}
  $\ger s \leq \ss$.
\end{thm}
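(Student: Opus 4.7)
The plan is a standard ``padding with zeros'' construction, paralleling the proof that $\ger b \le \rr$ cited above. Suppose $\mathcal A$ is a family of subsets of $\N$ with $|\mathcal A| < \ger s$; I want to produce a conditionally convergent series whose subseries $\sum_{n \in A} a_n$ converges for every $A \in \mathcal A$, which will show that $\mathcal A$ is not a witness for $\ss$.

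Fix any conditionally convergent series $\sum_k b_k$ (the alternating harmonic series $b_k = (-1)^k/(k+1)$ will do). Because $|\mathcal A| < \ger s$, the family $\mathcal A$ is not splitting, so there exists an infinite set $B \subseteq \N$ that is not split by any $A \in \mathcal A$; that is, for every $A \in \mathcal A$, either $A \cap B$ is finite or $B \setminus A$ is finite. Enumerate $B$ in increasing order as $\{n_0 < n_1 < n_2 < \dots\}$, and define a new series $\sum_n a_n$ by setting $a_{n_k} = b_k$ and $a_n = 0$ for $n \notin B$. Since $\sum_n a_n$ has the same partial sums (along the subsequence indexed by $B$, and all intermediate partial sums are equal to the nearest previous one) as $\sum_k b_k$, it is conditionally convergent.

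Now I check that this series defeats $\mathcal A$. Fix $A \in \mathcal A$. If $A \cap B$ is finite, then $\sum_{n \in A} a_n$ has only finitely many nonzero terms and so trivially converges. If $B \setminus A$ is finite, then $\sum_{n \in A} a_n$ differs from $\sum_n a_n$ by the finitely many nonzero terms indexed by $B \setminus A$, and so also converges. Thus $\sum_{n \in A} a_n$ converges for every $A \in \mathcal A$, so $\mathcal A$ does not witness $\ss \le |\mathcal A|$. This gives $\ss \ge \ger s$.

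There is no real obstacle here: the only thing to verify is that padding an inconvergent series with zeros yields a series with the same sum and the same conditional-convergence status, and that non-splitting of $B$ by $A$ forces the subseries indexed by $A$ to be essentially either a finite chunk of $\sum_k b_k$ or a cofinite chunk (up to zeros) — both of which are convergent.
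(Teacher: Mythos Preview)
Your proof is correct and is essentially identical to the paper's: both find an infinite $B$ unsplit by every $A\in\mathcal A$, transplant a fixed conditionally convergent series onto $B$ via padding with zeros, and then observe that each $A\in\mathcal A$ yields a subseries that is either finitely supported or a cofinite tail of the original.
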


\begin{pf}
We must show that, given a set $\mathcal A$ of subsets of $\N$ with $|\mathcal A| < \ger s$, there is a conditionally convergent series $\sum_{n \in \N}a_n$ such that, for every $A \in \mathcal A$, the subseries $\sum_{n \in A}a_n$ remains convergent. To do this, we will begin with any conditionally convergent series $\sum_{n \in \N}b_n$, for example the alternating harmonic series $\sum_{n \in \N} (-1)^n/n$, and modify it by inserting a large number of zeroes between consecutive terms. 

If $\mathcal A$ is a family of subsets of $\N$ with $|\mathcal A| < \ger s$, then there is some infinite $B \subseteq \N$ that is not split by any $A \in \mathcal A$. Let $e_B: \N \rightarrow \N$ be the unique increasing function enumerating the elements of $B$.

If $\sum_{n \in \N}b_n$ is any conditionally convergent series, then we define a series $\sum_{n \in \N}a_n$ by setting
\begin{align*}
a_n =
\begin{cases}
b_k \ \  & \text{if } n = e_B(k) \\
0 \ \  & \text{if } n \notin B.
\end{cases}
\end{align*}
The series $\sum_{n \in \N}a_n$ has the same nonzero terms as $\sum_{n \in \N}b_n$, in the same order; the only difference is that many zeros have been inserted. In particular, $\sum_{n \in \N}a_n$ is conditionally convergent.

If $A \in \mathcal A$, then there are two possibilities: because $A$ does not split $B$, one of either $B \cap A$ or $B \setminus A$ is finite. If $B \cap A$ is finite, then $\sum_{n \in A}a_n$ has only finitely many nonzero terms, and it follows that this subseries is convergent. On the other hand, if $B \setminus A$ is finite then the nonzero terms of $\sum_{n \in A}a_n$ are exactly the same as the nonzero terms of $\sum_{n \in \N}b_n$, and in the same order, except that finitely many of these terms may have been deleted; it follows that $\sum_{n \in A}a_n$ is convergent. Thus, in either case, $\sum_{n \in A}a_n$ is convergent.
\end{pf}

\begin{cor}
All three of the subseries numbers are uncountable.
\end{cor}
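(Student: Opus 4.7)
The plan is to reduce the corollary to the uncountability of the splitting number $\ger s$, since Theorem~\ref{aboves:thm} just established $\ger s \leq \ss$, and we already have $\ss \leq \ssi$ and $\ss \leq \sso$ from the first theorem in Section~\ref{sec:defs}. So any of the three subseries numbers is bounded below by $\ger s$, and it therefore suffices to argue that $\ger s \geq \aleph_1$.

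The uncountability of $\ger s$ is classical, but for a self-contained proof I would give the standard diagonal argument. Given any countable family $\{A_n : n \in \N\}$ of subsets of $\N$, I would construct by recursion a decreasing sequence of infinite sets $\N = B_0 \supseteq B_1 \supseteq B_2 \supseteq \dots$ such that, for each $n$, either $B_{n+1} \subseteq A_n$ or $B_{n+1} \cap A_n = \emptyset$: since at least one of $B_n \cap A_n$ or $B_n \setminus A_n$ must be infinite, simply let $B_{n+1}$ be whichever one is. Then I would diagonalize by choosing $b_n \in B_n$ with $b_0 < b_1 < b_2 < \dots$, and setting $B = \{b_n : n \in \N\}$. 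By construction $B \setminus B_n$ is finite for each $n$, and since $B_{n+1}$ is either contained in or disjoint from $A_n$, one of $B \cap A_n$ or $B \setminus A_n$ is finite. Hence no $A_n$ splits $B$, so $\{A_n : n \in \N\}$ fails the defining property of a splitting family, and $\ger s > \aleph_0$.

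There is really no obstacle here: the only substantive content is Theorem~\ref{aboves:thm}, which has just been proved, together with the classical diagonal construction of an unsplit pseudointersection of a countable family. Combining these gives $\aleph_1 \leq \ger s \leq \ss \leq \min\{\ssi,\sso\}$, which is exactly the statement of the corollary.
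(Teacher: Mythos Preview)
Your argument is correct and follows exactly the intended route: the paper states the corollary without proof immediately after Theorem~\ref{aboves:thm}, relying on $\ger s \leq \ss \leq \min\{\ssi,\sso\}$ together with the well-known fact that $\ger s$ is uncountable. You simply supply the standard diagonal construction of an unsplit set that the paper leaves implicit.
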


\section{Randomly signed series: $\cov L \leq \ss$} \label{random} 

In this section we obtain our second lower bound for $\ss$ by showing that $\cov L \leq \ss$. The main idea behind the proof is to assign to each member of the measure space $2^\N$ a conditionally convergent series, and then to show that any given $A \subseteq \N$ gives rise to a divergent subseries of only a null set of these conditionally convergent series. This idea was used in \cite{rearrangement} to prove the corresponding inequality for the rearrangement number, $\cov L \leq \rr$. 

\begin{df}      \label{lebesgue:df}
Recall that $2^\N$, the set of all infinite sequences of zeros and ones, when equipped with the usual product topology, is known as the \emph{Cantor space}. The \emph{Lebesgue measure} on $2^\N$ is generated by declaring each basic open set of the form$\{s \in 2^\N : s(n) = i\}$ (where $n \in \N$ and $i \in \{0,1\}$) to have measure $1/2$. 
The \emph{covering number of the null ideal}, denoted $\cov L$, is the smallest cardinality of a collection $\mathcal A$ of subsets of $2^\N$ such that each $N \in \mathcal A$ has Lebesgue measure $0$, and $\bigcup \mathcal A = 2^\N$. In other words, $\cov L$ is the smallest number of null sets required to cover the Cantor space.
\end{df}

The value of \cov L would be unchanged if we used the real line equipped with its usual measure, or indeed any Polish space equipped with a continuous measure, in place of the Cantor space in the previous definition. 

We shall need a result of Rademacher~\cite{rade},
stated as a lemma below, about infinite series with randomly chosen
signs. 

\begin{la}[Rademacher]
Let $(c_n:n\in\bbb N)$ be any sequence of real numbers.  Let
$C \subseteq2^{\bbb N}$ be the set of all $s \in 2^\N$ for which $\sum_{n \in \N}(-1)^{s(n)}c_n$ converges.  Then the Lebesgue
measure of $C$ is 1 if $\sum_{n \in \N}{c_n}^2$ converges and 0 otherwise. 
\end{la}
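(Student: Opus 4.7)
The plan is to interpret $s \in 2^{\N}$ probabilistically. Under Lebesgue measure the coordinates $s(n)$ are independent fair coin flips, so $\varepsilon_n(s) := (-1)^{s(n)}$ are independent Rademacher $\pm 1$ variables with mean $0$ and variance $1$. Let $S_N = \sum_{n \le N} \varepsilon_n c_n$; this is a sum of independent mean-zero random variables with $E[S_N^2] = \sum_{n \le N} c_n^2$, and $C$ is precisely the event that $(S_N)_N$ converges. Since membership of $s$ in $C$ does not depend on any finite prefix of $s$, the set $C$ is a tail event, so Kolmogorov's zero-one law already gives that its Lebesgue measure is either $0$ or $1$. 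The task is to decide which.

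For the case $\sum c_n^2 < \infty$, I would apply Kolmogorov's maximal inequality to the increments: for each $M < N$ and each $\lambda > 0$,
\[
P\Bigl(\max_{M < k \le N}\,|S_k - S_M| > \lambda\Bigr) \;\le\; \frac{1}{\lambda^2} \sum_{M < n \le N} c_n^2 .
\]
Letting first $N \to \infty$ and then $M \to \infty$ shows that $(S_N)$ is almost surely Cauchy, hence convergent, so $C$ has measure $1$.

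For the case $\sum c_n^2 = \infty$, the zero-one law reduces the task to ruling out measure $1$. If $c_n \not\to 0$, then $\varepsilon_n c_n \not\to 0$ pointwise, so the series diverges for every $s$ and $C = \emp$. Otherwise $c_n \to 0$, so $|c_n| \le 1$ beyond some finite index, and Kolmogorov's three-series theorem (with truncation at $1$) makes almost-sure convergence equivalent to $\sum c_n^2 < \infty$, which fails by hypothesis; hence $C$ has measure $0$. An alternative route is via characteristic functions: if $S_N \to S$ almost surely, then $\varphi_{S_N}(t) = \prod_{n \le N} \cos(c_n t)$ converges pointwise to the continuous function $\varphi_S(t)$, which is nonzero in some neighborhood of $0$; this forces $\sum c_n^2 < \infty$, a contradiction.

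The main obstacle is the divergent case: the zero-one reduction is cheap, but actually showing $\mu(C) \ne 1$ requires one genuinely nontrivial classical tool --- the three-series theorem, characteristic-function analysis, or a Paley--Zygmund-style lower bound on the partial sums. The convergent case, by contrast, is a single clean application of Kolmogorov's maximal inequality to a sum of orthogonal random variables, so the whole lemma hinges on finding the right structural fact for the direction that distinguishes $\sum c_n^2 = \infty$ from $\sum c_n^2 < \infty$.
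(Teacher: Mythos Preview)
Your argument is correct. Note, however, that the paper does not actually prove this lemma: it is quoted as a classical result of Rademacher with a citation to the original 1922 paper, and is used as a black box in the proof of Theorem~\ref{abovecovN:thm}. So there is no ``paper's own proof'' to compare against.

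On its own merits, your proof is the standard modern treatment. The probabilistic reinterpretation of Lebesgue measure on $2^{\bbb N}$ as i.i.d.\ fair coin flips is exactly right, and the zero--one law cleanly reduces the problem to deciding which alternative holds. For the convergent case, Kolmogorov's maximal inequality is the natural tool and your application is correct. For the divergent case, both routes you sketch are valid: the three-series theorem works once you have handled the case $c_n \not\to 0$ separately (so that truncation at level $1$ is eventually vacuous), and the characteristic-function argument via $\prod \cos(c_n t)$ is a legitimate alternative. Either way the proof is complete. Rademacher's original argument predates Kolmogorov's general theory and proceeds via orthogonal-function methods specific to the Rademacher system, so your approach is in fact more streamlined than the historical one.
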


In other words, if we attach signs randomly to the terms of the series
$\sum_nc_n$, the result will converge almost surely if $\sum_n{c_n}^2$
converges, and it will diverge almost surely otherwise.  

\begin{thm}     \label{abovecovN:thm}
$\cov L \leq \ss$.
\end{thm}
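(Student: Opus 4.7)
The plan is to use Rademacher's lemma twice: once to get a large family of conditionally convergent randomly-signed series indexed by $2^\N$, and again to show that each fixed $A \subseteq \N$ kills only a null set of them.

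First, I would fix a sequence $(c_n)$ of positive reals satisfying $\sum_n c_n^2 < \infty$ but $\sum_n c_n = \infty$; the concrete choice $c_n = 1/n$ works. For each $s \in 2^\N$, consider the series $\sum_n (-1)^{s(n)} c_n$. By Rademacher's lemma applied to $(c_n)$, the set $C \subseteq 2^\N$ of $s$ for which $\sum_n (-1)^{s(n)} c_n$ converges has full Lebesgue measure. For every $s \in C$, the resulting series is conditionally convergent, since its terms have absolute values $c_n$ and $\sum_n c_n = \infty$.

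Next, fix any $A \subseteq \N$. Applying Rademacher's lemma to the subsequence $(c_n)_{n \in A}$, and using that $\sum_{n \in A} c_n^2 \leq \sum_n c_n^2 < \infty$, we conclude that the set
\[
N_A = \bigl\{\, s \in 2^\N : \textstyle\sum_{n \in A} (-1)^{s(n)} c_n \text{ diverges}\,\bigr\}
\]
has Lebesgue measure zero. This is the key observation: on the probability space $2^\N$, the subseries on $A$ converges almost surely, regardless of which $A$ we chose.

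Now suppose $\mathcal{A}$ is a family of subsets of $\N$ with $|\mathcal{A}| < \cov L$. Then $(2^\N \setminus C) \cup \bigcup_{A \in \mathcal{A}} N_A$ is a union of fewer than $\cov L$ many Lebesgue null sets (using that $\cov L \geq \aleph_1$ to absorb the single extra set $2^\N \setminus C$), so by the definition of $\cov L$ it cannot cover $2^\N$. Pick any $s$ outside this union. Then $\sum_n (-1)^{s(n)} c_n$ is a conditionally convergent series, and for every $A \in \mathcal{A}$ the subseries $\sum_{n \in A} (-1)^{s(n)} c_n$ converges. Hence $\mathcal{A}$ fails to witness $\ss$, proving $\ss \geq \cov L$.

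The argument is essentially forced once we have Rademacher's lemma in hand, so there is no real obstacle; the only point requiring a bit of care is choosing $(c_n)$ so that $\sum c_n^2$ converges (so Rademacher delivers almost sure convergence of every subseries) while simultaneously $\sum c_n$ diverges (so the resulting series are conditionally, not absolutely, convergent), and then combining the null set $2^\N \setminus C$ with the family $\{N_A : A \in \mathcal{A}\}$ using the uncountability of $\cov L$.
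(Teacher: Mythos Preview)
Your proof is correct and follows essentially the same approach as the paper: both use Rademacher's lemma with $c_n = 1/n$ to show that each $A \subseteq \N$ produces a divergent subseries only on a null set of sign sequences, then avoid fewer than $\cov L$ null sets. The only cosmetic difference is that where you add the null set $2^\N \setminus C$ to the family and invoke $\cov L \geq \aleph_1$, the paper instead assumes without loss of generality that $\N \in \mathcal A$, which accomplishes the same thing since $\mathrm{Div}_\N = 2^\N \setminus C$.
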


\begin{pf}
To begin, fix $A \subseteq \N$, and let us consider the question of whether we should expect a randomly signed harmonic series to converge or diverge on $A$. In other words, we would like to know the Lebesgue measure of the set
$$\mathrm{Div}_A = \textstyle \{s \in 2^\N : \sum_{n \in A}(-1)^{s(n)}/n \text{ diverges}\}.$$
Because the series $\sum_{n \in \N}1/n^2$ converges (and has only positive terms), the subseries $\sum_{n \in A}1/n^2$ converges too. Thus, by Rademacher's theorem, the Lebesgue measure of $2^\N \setminus \mathrm{Div}_A$ is $1$, so that the measure of $\mathrm{Div}_A$ is $0$.

Now consider any family $\mathcal A$ of fewer than \cov L subsets of $\N$. We will find a conditionally convergent series, of the form $\sum_{n \in \N}(-1)^{s(n)}/n$, such that, for any $A \in \mathcal A$, the subseries $\sum_{n \in A}(-1)^{s(n)}/n$ converges. This suffices to show that $\cov L \leq \ss$. 

Without loss of generality, we may assume $\N \in \mathcal A$. For each $A \in \mathcal A$, the set $\mathrm{Div}_A$ has Lebesgue measure $0$ in $2^\N$. It follows that there is some $s \in 2^\N$ that is not in $\mathrm{Div}_A$ for any $A \in \mathcal A$ (because otherwise we would have fewer than $\cov L$ null sets covering $2^\N$). This choice of $s$ guarantees that the series $\sum_{n \in \N}(-1)^{s(n)}/n$ is conditionally convergent (because $s \notin \mathrm{Div}_\N$), but that, for each $A \in \mathcal A$, the subseries $\sum_{n \in A}(-1)^{s(n)}/n$ converges.
\end{pf}

There is no provable inequality in either direction between \cov L and
\ger s.  Specifically, $\cov L < \ger s$ in the Mathias model and
$\ger s < \cov L$ in the random real model.  It follows that the lower bounds for
\ss\ in Theorems~\ref{aboves:thm} and \ref{abovecovN:thm} are
independent, and each of them can consistently be strict: $\ger s < \ss$ in the random real model, and $\cov L < \ss$ in the Mathias model.

\section{Generic sets: $\sso \leq \non M$} \label{cohen1} 

In this section we show that $\sso \leq \non M$. It follows that $\non M$ is also an upper bound for $\ss$, and that $\ss$ and $\sso$ are both consistently smaller than $\ger c$. Let us begin by recalling the definition of $\non M$:

\begin{df}      \label{baire:df} 
  A subset $M$ of a complete metric space $X$ is \emph{meager} (also
  called \emph{first category}) if it can be covered by countably many
  closed sets with empty interiors in $X$.  A \emph{comeager} set is
  the complement of a meager set; equivalently, it is a set that
  includes the intersection of countably many dense open subsets of
  $X$.  When $X$ is the Cantor space $2^{\bbb N}$, we denote the
  family of meager subsets of $2^{\bbb N}$ by \scr M.  The \emph{uniformity
    of Baire category}, denoted \non M, is the minimum cardinality of a
  non-meager subset of $2^{\bbb N}$.
\end{df}

The value of \non M would be unchanged if we used the real line, the Baire space $\N^\N$, or any other Polish space (provided it contains no isolated points) in
place of the Cantor space in the previous definition. 

\begin{thm}     \label{belownonM:thm}
$\sso \leq \non M$.
\end{thm}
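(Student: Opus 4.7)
The plan is to exploit the standard duality: a set of size $\non M$ meets every comeager set, so it suffices to show that for every conditionally convergent series $\sum_{n} a_n$, the collection of $s \in 2^\N$ whose induced subseries diverges by oscillation is comeager. Then, fixing any non-meager $X \subseteq 2^\N$ with $|X| = \non M$ and identifying each $s \in X$ with $A_s = \{n : s(n)=1\}$, the family $\{A_s : s \in X\}$ witnesses $\sso \leq \non M$.

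For the comeager claim, fix a conditionally convergent series $\sum_n a_n$ and, for $s \in 2^\N$, write $T_k(s) = \sum_{n \leq k,\,s(n)=1} a_n$ for the $k$th partial sum of the subseries indexed by $s$. For each pair $N, M \in \N$ define
\[
U_{N,M} = \bigl\{ s \in 2^\N : \text{there exist } k, k' > N \text{ with } T_k(s) - T_{k'}(s) > M \bigr\}.
\]
The set $U_{N,M}$ is open, since any witness $(k,k')$ for $s \in U_{N,M}$ remains a witness for every $s'$ agreeing with $s$ on coordinates $\leq \max(k,k')$. For density, given a basic open set determined by $t \in 2^{\ell}$ with $\ell > N$, I will extend $t$ to a sequence in $U_{N,M}$. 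Since $\sum_n a_n$ is conditionally convergent, the sums $\sum\{a_n : n > \ell,\,a_n > 0\}$ and $\sum\{a_n : n > \ell,\,a_n < 0\}$ are $+\infty$ and $-\infty$ respectively. So we may first set $s(n)=1$ for the positive $a_n$ (and $s(n)=0$ for the negative ones) on an initial block past $\ell$ until the partial sum $T_k(s)$ has been raised by more than $M+1$ beyond $T_\ell(t)$, and then continue by selecting only negative terms on a subsequent block until the partial sum $T_{k'}(s)$ has been lowered back down by more than $M+1$. The resulting pair $k,k' > N$ witnesses membership in $U_{N,M}$, regardless of how $s$ is completed afterwards.

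Running the same argument with the roles of positive and negative reversed (or applying the above to $-a_n$) shows that the symmetric set with $T_{k'}(s) - T_k(s) > M$ is also open and dense, so the intersection
\[
G \;=\; \bigcap_{N,M \in \N} U_{N,M} \,\cap\, (\text{its symmetric counterpart})
\]
is comeager. For any $s \in G$, the partial sums $T_k(s)$ satisfy $\limsup_k T_k(s) = +\infty$ and $\liminf_k T_k(s) = -\infty$, so the subseries $\sum_{n \in A_s} a_n$ diverges by oscillation, i.e.\ $G \subseteq \mathrm{Osc}$.

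Since $X$ is non-meager, $X$ cannot be contained in the meager set $2^\N \setminus G$, hence $X \cap G \neq \emptyset$; any $s \in X \cap G$ produces an $A_s$ in our family on which the subseries oscillates. The main obstacle is really just the density argument above, which is a clean finitary consequence of conditional convergence — once it is in hand, the proof reduces to the standard non-meager-meets-comeager observation.
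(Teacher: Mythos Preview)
Your overall strategy matches the paper's: show that for each conditionally convergent $\sum_n a_n$ the set of $s$ whose subseries oscillates is comeager, then use that a non-meager set meets every comeager set. The gap is in your choice of open dense sets. Your ``symmetric counterpart'' $\{s:\exists k,k'>N,\ T_{k'}(s)-T_k(s)>M\}$ is literally the same set as $U_{N,M}$, since $k$ and $k'$ are existentially quantified and may be swapped. More importantly, $\bigcap_{N,M}U_{N,M}$ is not contained in the oscillating set: if $T_k(s)\to+\infty$ (for instance, take $s$ selecting exactly the positive terms), then for any $N,M$ you may set $k'=N+1$ and choose $k$ large enough that $T_k(s)-T_{k'}(s)>M$, so $s\in G$ yet $\liminf_k T_k(s)=+\infty$. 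Your conclusion ``$\limsup=+\infty$ and $\liminf=-\infty$'' therefore does not follow.

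The easy repair, which is exactly what the paper does, is to track absolute height rather than differences: set $U_k=\{s:\exists m,\ T_m(s)\geq k\}$ and $V_k=\{s:\exists m,\ T_m(s)\leq -k\}$. These are open by the same finite-coordinate argument, and dense by extending with positive (respectively negative) terms only. Now $s\in\bigcap_k(U_k\cap V_k)$ genuinely forces $\sup_m T_m(s)=+\infty$ and $\inf_m T_m(s)=-\infty$, hence oscillation. Your density argument already contains the ingredients for this; you just need to aim the open sets at the partial sums themselves rather than at their successive differences.
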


\begin{pf}
Recall that we may identify $2^\N$ with $\mathcal P(\N)$, the power set of $\N$, via characteristic functions. In this way we may view $\mathcal P(\N)$ as a Polish space, and we may sensibly talk about meager, comeager, and non-meager sets of subsets of $\N$.

To prove the theorem, we will show that if $\sum_{n \in \N}a_n$ is a conditionally convergent series, then a ``generic'' subset $A$ of $\N$ gives rise to a subseries $\sum_{n \in A}a_n$ that diverges by oscillation. In other words, the set of all $A \subseteq \N$ that have this property is a comeager subset of $\mathcal P(\N)$. Consequently, if $\mathcal A$ is any non-meager subset of $\mathcal P(\N)$, then there is some $A \in \mathcal A$ such that $\sum_{n \in A}a_n$ diverges by oscillation. Since this is true for any conditionally convergent series, it follows that $\sso \leq \non M$.

Thus, to prove the theorem, it suffices to show that, for any conditionally convergent series $\sum_{n \in \N}a_n$,
$$\textstyle \{A \subseteq \N : \sum_{n \in A}a_n \text{ diverges by oscillation}\}$$
is a comeager subset of $\mathcal P(\N)$.

Fix a conditionally convergent series $\sum_{n \in \N}a_n$. Let $k \in \N$ and define
\[
U_k = \textstyle \{ A \subseteq \N : \sum_{n \in A \cap [1,m]} a_n \geq k \text{ for some } m\}
\]
We claim that for every $k \in \N$, $U_k$ is dense and open in $\mathcal P(\N)$. 

If $A \in U_k$, then there is
$m \geq k$ such that $\sum_{n \in A \cap [1,m]}a_n \geq k$, and any $A'$ that agrees
with $A$ on the interval $[1,m]$ is also in $U_k$. Recalling the definition of the product topology on $2^\N$, it follows that $U_k$ is open.

Next suppose that $A_0 \subseteq [1,\ell]$. Because $\sum_{n \in \N}a_n$ is conditionally convergent, if $P = \{n : a_n > 0\}$ then $\sum_{n \in P}a_n = \infty$. Letting $A = A_0 \cup (P \cap (\ell,\infty))$, we have $\sum_{n \in A}a_n = \infty$, which in particular implies $A \in U_k$. Again, recalling the definition of the product topology on $2^\N$, this shows that $U_k$ is dense in $2^\N$.

Similarly, for each $k \in \N$ define
\[
V_k = \textstyle \{ A \subseteq \N : \sum_{n \in A \cap [1,m]} a_n \leq -k \text{ for some } m\}
\]
By arguing in the same way as for $U_k$, we see that each $V_k$ is an open dense subset of $2^\N$.

The set $\mathcal O = \bigcap_{k \in \N}(U_k \cap V_k)$ is a countable intersection of dense open sets, and therefore is a comeager subset of $\mathcal P(\N)$. It is clear that if $A \in \mathcal O$ then $\sum_{n \in A}a_n$ diverges by oscillation.
\end{pf}

Our proof shows that for a given conditionally convergent series $\sum_{n \in \N}a_n$, there is a comeager set of $A \subseteq \N$ with the property that the subseries $\sum_{n \in A}a_n$ diverges by oscillation. In fact, we showed a bit more than this: there is a comeager set of $A \subseteq \N$ with the property that the subseries $\sum_{n \in A}a_n$ diverges by oscillation in the strongest possible way, namely
$$\limsup_{m \to \infty} \sum_{n \in A \cap [1,m]}a_n = \infty \qquad \text{and} \qquad \liminf_{m \to \infty} \sum_{n \in A \cap [1,m]}a_n = -\infty.$$

The inequality proved in this section can be strict. This follows from the main theorem of Section~\ref{sec:Laver} below, which states that $\sso = \aleph_1$ in the Laver model. It is well-known that $\non M = \aleph_2$ in that model, thus showing the consistency of $\sso < \non M$.

\section{More Polish spaces: $\ssi \geq \cov M$}

In this section we show that $\ssi \geq \cov M$. The idea of the proof is similar to that of Theorem~\ref{abovecovN:thm} in Section~\ref{random}, where we showed $\ss \geq \cov L$. We will begin by defining a Polish space $K$, and a way of associating to each $x \in K$ a conditionally convergent series $\sum_{n \in \N}a^x_n$. We will show that for any given $A \subseteq \N$, the set of all series for which $A$ defines a subseries going to $\infty$ determines a meager subset of $K$, and this will be used to conclude that $\ssi \geq \cov M$.

\begin{df}      \label{baire:df} 
  The \emph{covering number for Baire category}, denoted by $\cov M$, is the minimum cardinality of a family $\mathcal A$ of meager subsets of the Cantor space $2^\N$ with the property that $\bigcup \mathcal A = 2^\N$.
\end{df}

The value of \cov M would be unchanged if we used the real line, the Baire space $\N^\N$, or any other Polish space (provided it contains no isolated points) in
place of the Cantor space in the previous definition. For the proof below, the Polish space we will use is in fact homeomorphic to the Cantor space, but it will not be the usual representation as the product space $2^\N$.

\begin{thm}     \label{abovecovM:thm}
$\ssi \geq \cov M$.
\end{thm}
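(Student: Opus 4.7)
The plan is to adapt the proof of Theorem~\ref{abovecovN:thm} ($\cov L \leq \ss$), replacing Lebesgue measure by Baire category. I will define a Polish space $K$ homeomorphic to the Cantor space together with a conditionally convergent series $\sum_n a_n^x$ for each $x \in K$, such that for every $A \subseteq \N$ the set
\[
E_A \;=\; \bigl\{ x \in K : \textstyle\sum_{n \in A} a_n^x \text{ diverges to } +\infty \text{ or to } -\infty \bigr\}
\]
is meager in $K$. Given any $\mathcal A$ with $|\mathcal A| < \cov M$, the union $\bigcup_{A \in \mathcal A} E_A$ will then be meager, hence a proper subset of $K$; any $x$ outside this union supplies a conditionally convergent series witnessing that $\mathcal A$ fails to realize $\ssi$.

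For the construction, I partition $\N$ into consecutive pairs $I_k = \{2k-1, 2k\}$ and set $K = \prod_{k \in \N}\{-1, +1\}$, viewed as a Cantor space indexed by the pairs rather than by the points of $\N$. For $\sigma = (\sigma_k) \in K$, define $a_{2k-1}^\sigma = \sigma_k/k$ and $a_{2k}^\sigma = -\sigma_k/k$. The block sums vanish, within-block partial sums are bounded by $1/k \to 0$, and $\sum_n |a_n^\sigma| = \sum_k 2/k = \infty$; hence $\sum_n a_n^\sigma$ is conditionally convergent (with sum $0$) for every $\sigma \in K$.

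To show $E_A$ is meager, let $S = \{k : |A \cap I_k| = 1\}$. Pairs $I_k$ with $k \notin S$ contribute $0$ to $\sum_{n \in A} a_n^\sigma$ regardless of $\sigma$, while each $k \in S$ contributes $\pm \sigma_k/k$, the outer sign being determined by $A$ alone. If $\sum_{k \in S} 1/k < \infty$, then $\sum_{n \in A} a_n^\sigma$ converges absolutely for every $\sigma$, so $E_A = \emptyset$. Otherwise, for each $N \in \N$ set
\[
V_N \;=\; \bigl\{ \sigma \in K :\ \exists\, m > N,\ \textstyle\sum_{n \in A \cap [1,m]} a_n^\sigma \leq 0 \bigr\}.
\]
Each $V_N$ is open since, for fixed $m$, the partial sum depends on only finitely many coordinates of $\sigma$. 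Each $V_N$ is dense: from any cylinder specifying $\sigma_1, \dots, \sigma_K$, one can extend by choosing $\sigma_k$ for $k \in S$ with $k > K$ so that the contribution of the $k$-th pair is $-1/k$, and since $\sum_{k \in S,\, k > K} 1/k = \infty$ the running partial sum is eventually driven below $0$. Hence $\bigcap_N V_N$ is comeager, which rules out $\sum_{n \in A} a_n^\sigma \to +\infty$; the symmetric argument with $\geq 0$ in place of $\leq 0$ rules out divergence to $-\infty$, so $E_A$ is meager.

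The main obstacle is the tension between ensuring that every element of the parameter space produces a conditionally convergent series and retaining enough freedom for the density argument to succeed uniformly in $A$. The pair-block design resolves it: the cancellation inside each pair is rigid enough to make $\sum_n a_n^\sigma$ conditionally convergent for every $\sigma$, while the single free sign $\sigma_k$ on each pair provides exactly the flexibility needed to steer the partial subsums across $0$ arbitrarily late, no matter which $A$ one is testing against.
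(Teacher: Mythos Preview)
Your proof is correct, and it takes a genuinely different route from the paper's.

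The paper builds its Polish space from intervals $I_k$ of length $2k$ with terms $\pm 1/k^2$; a point $x$ selects which $k$ positions in $I_k$ carry the positive sign. To show that $\mathrm{Inf}_A=\{x:\sum_{n\in A}a_n^x=\infty\}$ is meager, the paper invokes the topological zero--one law for tail sets together with a symmetry argument: the sign-flipping homeomorphism $h(x)(k)=I_k\setminus x(k)$ carries $\mathrm{Inf}_A$ into its complement, so $\mathrm{Inf}_A$ cannot be comeager and hence must be meager. Your construction instead uses length-$2$ blocks with terms $\pm\sigma_k/k$, reducing the subseries on $A$ to a signed series $\sum_{k\in S}\epsilon_k^A\sigma_k/k$ over the set $S$ of blocks meeting $A$ in exactly one point; you then run a direct Baire-category argument, exhibiting explicit dense open sets $V_N$ (and their reflections) whose intersection witnesses that the partial sums cross $0$ infinitely often.

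What each approach buys: your argument is more elementary---it avoids the zero--one law entirely and makes the meagerness completely explicit via a hands-on density extension. The paper's approach is less computational at the density step (symmetry plus zero--one does the work in one stroke) but imports an extra tool. Both yield exactly the same conclusion, and both fit the template of parametrizing conditionally convergent series by a Cantor-type space and showing that the ``bad'' set for any single $A$ is meager.
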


\begin{pf}

To begin, let us define a sequence of intervals as follows. Set $I_1 = [1,2]$, $I_2 = (2,6]$, $I_3 = (6,12]$, and in general let $I_k$ be the interval of length $2k$ that is adjacent (on the right) to $I_{k-1}$.

Let $D_k$ denote the set of all $k$-element subsets of $I_k$, and let us consider $D_k$ as a topological space by giving it the discrete topology (so up to homeomorphism, $D_k$ is simply the discrete space on $2k \choose k$ points). Let $K = \prod_{k \in \N}D_k$. As a set, $K$ consists of functions with domain $\N$ mapping each $k$ to some $k$-element subset of $I_k$. As a topological space, $K$ is homeomorphic to the Cantor space.

For each $x \in K$, we define a sequence $a^x_n$ as follows:
\begin{align*}
a^x_n = 
\begin{cases}
1/k^2 & \text{ if } n \in I_k \cap x(k) \\
-1/k^2 & \text{ if } n \in I_k \setminus x(k),
\end{cases}
\end{align*}
That is, on the interval $I_k$ our sequence $a^x_n$ will consist of $2k$ terms, $k$ of them equal to $1/k^2$ and $k$ of them equal to $-1/k^2$, with $x(k)$ telling us which terms are positive and which ones negative.

\vspace{3mm}

\noindent \emph{Claim:} $\sum_{n \in \N}a^x_n$ converges conditionally to $0$ for every $x \in K$.

\vspace{3mm}

\noindent \emph{Proof of claim.} Fix $x \in K$. For any given $k \in \N$, we have $\sum_{n \in I_k}a^x_n = 0$, because the positive and negative terms cancel. Now let $m \in \N$, and let us consider the partial sum $\sum_{n \leq m}a^x_n$. If $m \in I_k$, then
$$\sum_{n \leq m}a^x_n = \sum_{j < k}\sum_{n \in I_j}a^x_n + \sum_{n \in I_k \cap [1,m]}a^x_n = \sum_{n \in I_k \cap [1,m]}a^x_n.$$
Observe that $|\sum_{n \in I_k \cap [1,m]}a^x_n| \leq 1/k$, because $1/k$ is the total of all positive terms in $I_k$ and $-1/k$ is the total of all negative terms in $I_k$. Thus the partial sums $\sum_{n \leq m}a^x_n$ of the series $\sum_{n \in \N}a^x_n$ approach $0$ as $m$ grows large, which means that $\sum_{n \in \N}a^x_n = 0$. To see that the convergence is conditional, simply note that the sum of all the positive terms of the series is infinite:
$$\sum_{a^x_n > 0}a^x_n = \sum_{k \in \N} \left( \sum_{n \in I_k, a^x_n > 0}a^x_n \right) = \sum_{k \in \N}1/k = \infty.$$
This completes the proof of the claim. \hfill \qed \vspace{3mm}

Thus we have a Polish space $K$, and a way of associating a conditionally convergent series to every $x \in K$. For every $A \subseteq \N$, define
$$\mathrm{Inf}_A = \textstyle \{x \in K : \sum_{n \in A}a^x_n = \infty\}.$$

\vspace{3mm}

\noindent \emph{Claim:} For every $A \subseteq \N$, $\mathrm{Inf}_A$ is meager in $K$.

\vspace{3mm}

\noindent \emph{Proof of claim.} Notice that if $x,y \in K$ and if $x(k) = y(k)$ for all but perhaps finitely many $k \in \N$, then $x \in \mathrm{Inf}_A$ if and only if $y \in \mathrm{Inf}_A$. In other words, modifying a point of $K$ at finitely many coordinates cannot change whether it is in $\mathrm{Inf}_A$. This property is sometimes expressed by saying that $\mathrm{Inf}_A$ is a \emph{tail set} in $K$.

By the zero-one law for Baire Category (see Theorem 21.3 in \cite{Oxtoby}), every tail set in $K$ is either meager or co-meager. Thus, to prove the claim, it suffices to show that $\mathrm{Inf}_A$ is not co-meager for any $A \subseteq \N$.

Define a homeomorphism $h: K \to K$ by setting $h(x(k)) = I_k \setminus x(k)$ for all $k \in \N$. It is clear that $h$ is a homeomorphism from $K$ to itself. It is also clear that $a^{h(x)}_n = -a^x_n$ for all $n \in \N$ and all $x \in K$; in other words, $h$ has the effect of changing the sign of every term of the series $\sum_{n \in \N}a^x_n$. From this observation it follows that if $\sum_{n \in \N}a^x_n = \infty$, then $\sum_{n \in \N}a^{h(x)}_n = -\infty$. Thus $h$ maps $\mathrm{Inf}_A$ into its complement $K \setminus \mathrm{Inf}_A$.

If $\mathrm{Inf}_A$ were co-meager in $K$, then the image of $\mathrm{Inf}_A$ under $h$ would also be co-meager in $K$, because $h$ is a homeomorphism. But the intersection of two co-meager sets cannot be empty, so this would mean $\mathrm{Inf}_A \cap h[\mathrm{Inf}_A] \neq \emptyset$. This is not the case, by the previous paragraph. Thus $\mathrm{Inf}_A$ is not co-meager, finishing the proof of the claim.  \hfill \qed

\vspace{3mm}

To complete the proof of the theorem, consider any family $\mathcal A$ of fewer than \cov M subsets of $\N$. For each $A \in \mathcal A$, the set $\mathrm{Inf}_A$ is meager in $K$. It follows that there is some $x \in K$ that is not in $\mathrm{Inf}_A$ for any $A \in \mathcal A$ (because otherwise we would have fewer than $\cov M$ meager sets covering $K$). This choice of $x$ guarantees that it is not the case that $\sum_{n \in \N}a^x_n = \infty$ for any $A \in \mathcal A$. On the other hand, $\sum_{n \in \N}a^x_n$ converges conditionally to $0$, so we have found a conditionally convergent series, namely $\sum_{n \in \N}a^x_n$, such that, for any $A \in \mathcal A$, it is not the case that $\sum_{n \in A}a^x_n = \infty$. It follows that $\cov M \leq \ssi$. 
\end{pf}

Let us note that the corresponding result for rearrangement numbers, that $\rri \geq \cov M$, was proved in \cite{rearrangement}, though the proof there is fundamentally different (and easier); it hinges on the proof that $\rro = \rr$, and the analogue of this result does not seem to hold for the subseries numbers (although a version of it will be obtained in Section~\ref{ssovsss}).

In the random real model, $\cov M = \aleph_1$ while $\cov L = \ger c$. Using Theorem~\ref{abovecovN:thm}, $\cov L \leq \ss \leq \ssi$, and it follows that $\ssi > \cov M$ in the random real model. Thus the inequality proved in this section is consistently strict.

In the Cohen model, $\cov M = \ger c$ while $\non M = \aleph_1$. By Theorems~\ref{belownonM:thm} and \ref{abovecovM:thm}, it follows that $\sso < \ssi$ in the Cohen model. Thus the inequality $\ss \leq \ssi$ is consistently strict.

\section{Sparse sets: $\rr \leq \max \{\ss,\ger b\}$ and $\rri \leq \max \{\ssi,\ger d\}$} \label{ssvsrr} 

In this section we explore the relationship between the subseries numbers and their corresponding rearrangement numbers. We will prove that $\rr \leq \max \{\ss,\ger b\}$ and $\rri \leq \max \{\ssi,\ger d\}$. Very roughly, the main idea behind the proof is that the introduction of $\ger b$ and $\ger d$ allows us to take a divergent subseries and stretch its complement out onto a sufficiently sparse set, and in this way to build from it a rearranging permutation.

Let us begin by recalling the definitions of $\ger b$ and $\ger d$:

\begin{df}      \label{bd:df}
For functions $f,g:\bbb N\to\bbb N$, define $f\leq^*g$ to mean that
$f(n)\leq g(n)$ for all but finitely many $n\in\bbb N$.
\begin{ls}
\item The \emph{bounding number} \ger b is the minimum cardinality of a family
\scr B of functions $f:\bbb N\to\bbb N$ such that no single $g$ is
$\geq^*$ all $f\in\scr B$.
\item The \emph{dominating number} \ger d is the
minimum cardinality of a family \scr D of functions $f:\bbb N\to\bbb
N$ such that every $g:\bbb N\to\bbb N$ is $\leq^*$ at least one member
of \scr D.
\end{ls}
\end{df}

In the proof presented below, it is inconvenient to work directly with the definitions of $\ger b$ and $\ger d$. Instead we will use an alternative characterization in terms of families of ``sparse'' subsets of $\N$. This characterization is given in a slightly different form by Blass in Theorem 2.10 of \cite{hdbk}, and we refer the reader there for a proof. The earliest source for this characterization of $\ger b$ and $\ger d$ seems to be R. C. Solomon's \cite{sol}.

\begin{la}\label{lem:b&d}
Given $A, B \subseteq \N$, we say that $A$ is sparser than $B$ provided that, other than perhaps finitely often, there is not more than one element of $A$ between any two elements of $B$.
\begin{ls}
\item $\ger b$ is the minimum cardinality of a family $\mathcal B$ of subsets of $\N$ such that no single subset of $\N$ is sparser than every member of $\mathcal B$.
\item $\ger d$ is the minimum cardinality of a family $\mathcal D$ of subsets of $\N$ such that for any given subset of $\N$, some member of $\mathcal D$ is sparser.
\end{ls}
\end{la}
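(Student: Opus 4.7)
The plan is to reduce the lemma to the classical characterizations of $\ger b$ and $\ger d$ as the minimum cardinalities of, respectively, $\leq^*$-unbounded and $\leq^*$-dominating families in $\omom$, by identifying each infinite $B \subseteq \N$ with its increasing enumeration $e_B \in \omom$. The argument rests on two observations.

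First, a \emph{one-way translation}: if $A$ is sparser than $B$, then all but finitely many intervals $(b_i,b_{i+1}]$ contribute at most $2$ elements to $A$, yielding $|A \cap [0,b_n]| \leq 2n + C$ cofinitely for some constant $C=C(A,B)$. Consequently $a_{3k} > b_k$ for every sufficiently large $k$, so the single function $k\mapsto a_{3k}$ $\leq^*$-dominates $e_B$.

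Second, a \emph{greedy construction}: for any strictly increasing $h \in \omom$, define $D_h \subseteq \N$ recursively by $d_0 = 0$ and $d_{n+1} = d_n + h(d_n+1) + 1$. I claim $D_h$ is sparser than every $B \subseteq \N$ with $e_B \leq^* h$. Indeed, if $d_n, d_{n+1}$ were both in $(b_i, b_{i+1})$ for some large enough $i$, then $h(d_n+1)+1 = d_{n+1}-d_n < b_{i+1} - b_i \leq h(i+1)$ combined with strict monotonicity of $h$ would force $d_n \leq i-1$; but $d_n > b_i \geq i$ forces $d_n \geq i+1$, a contradiction.

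With these tools in hand, the lemma follows routinely. For $\ger b$: take an unbounded $\mathcal F$ of strictly increasing functions with $|\mathcal F|=\ger b$ and set $\mathcal B=\{\mathrm{range}(f): f\in\mathcal F\}$; any $A$ sparser than every $B\in\mathcal B$ would make $k\mapsto a_{3k}$ a common $\leq^*$-upper bound for $\mathcal F$, which is impossible. Conversely, if $|\mathcal B|<\ger b$, an $h$ bounding all $e_B$ yields $D_h$ sparser than every $B\in\mathcal B$. The $\ger d$ case is dual: a dominating $\mathcal F$ gives $\mathcal D=\{D_f:f\in\mathcal F\}$ witnessing the sparser-property (for any infinite $A$, pick $f \in \mathcal F$ with $e_A \leq^* f$, then $D_f$ is sparser than $A$), while any witness $\mathcal D$ produces a $\leq^*$-dominating family $\{k\mapsto d_{3k}: D\in\mathcal D\}$ via the one-way translation. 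The only delicate point is verifying the greedy construction, whose crux is the trivial inequality $b_i \geq i$.
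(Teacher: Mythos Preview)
The paper does not actually prove this lemma; it cites Theorem~2.10 of Blass's handbook chapter \cite{hdbk} and Solomon's paper \cite{sol} and ``refer[s] the reader there for a proof,'' so there is no in-paper argument to compare yours against. Your reduction to the standard $\leq^*$-characterizations of $\ger b$ and $\ger d$ via the one-way counting bound and the greedy set $D_h$ is correct and is essentially the classical argument one finds in those references.

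One minor point worth making explicit: as literally stated the lemma quantifies over \emph{all} subsets of $\N$, but any finite $A$ is vacuously sparser than every infinite $B$ (once $b_i>\max A$ the interval $(b_i,b_{i+1})$ contains no element of $A$), so the first bullet can only hold if ``subset'' is read as ``infinite subset.'' You are implicitly assuming this when you invoke $e_A$ and the function $k\mapsto a_{3k}$; it would be cleaner to say so at the outset rather than leaving it to the reader.
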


It will be convenient to define a specific type of permutation of the natural numbers, which we call a \emph{shuffle}. This definition is taken from Section 11 of \cite{rearrangement}, where shuffles are studied in a different context.

\begin{df}      \label{shuffle:df}
Let $A$ and $B$ be two infinite, coinfinite subsets of \bbb N.  The
\emph{shuffle} determined by $A$ and $B$ is the permutation $s_{A,B}$
of \bbb N that maps $A$ onto $B$ preserving order and maps
$\bbb N\setminus A$ onto $\bbb N\setminus B$ preserving order.  That is,
\[
s_{A,B}(n)=
\begin{cases}
  k\th\text{ element of }B&\text{ if }n\text{ is the }k\th\text{
    element of }A\\
  k\th\text{ element of }\bbb N\setminus B&\text{ if }n\text{ is the
  }k\th\text{ element of }\bbb N\setminus A
\end{cases}
\]
\end{df}

The usual proof of the Riemann rearrangement theorem makes use only of shuffles: the relative order of the positive terms remains unchanged, as does the relative order of the negative terms, and it is by splicing these two sets into each other in some way that one may rearrange a conditionally convergent series to diverge in any prescribed manner.

\begin{thm}     \label{ssvsrr:thm}
$\ $
\begin{enumerate}
\item $\rr \leq \max \{\ss,\ger b\}$. 
\item $\rri \leq \max \{\ssi,\ger d\}$. 
\end{enumerate}
\end{thm}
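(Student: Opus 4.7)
I would prove~(1); the argument for~(2) will be analogous, with $\ssi$ and $\ger d$ replacing $\ss$ and $\ger b$. First, fix families $\mathcal A$ of size $\ss$ witnessing Definition~\ref{df-ss} and $\mathcal B$ of size $\ger b$ witnessing Lemma~\ref{lem:b&d}, with every member of both families infinite and coinfinite. The plan is to show that the family of shuffles $\{s_{B,A}:A\in\mathcal A,\ B\in\mathcal B\}$, of cardinality at most $\max\{\ss,\ger b\}$, witnesses~$\rr$. Given a conditionally convergent series $\sum_{n\in\N}a_n$ with limit $L$, the task then reduces to finding some $s_{B,A}$ in this family whose induced rearrangement fails to converge to $L$.

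The first concrete step is a bookkeeping calculation. Pick $A\in\mathcal A$ with $\sum_{n\in A}a_n$ divergent, enumerate $A=\{a_0<a_1<\cdots\}$ and $\N\setminus A=\{d_0<d_1<\cdots\}$, and set $P_k=\sum_{j<k}a_{a_j}$ and $Q_k=\sum_{j<k}a_{d_j}$. Unfolding the definition of $s_{B,A}$, the $N$-th partial sum of $\sum_{n}a_{s_{B,A}(n)}$ works out to $P_{\alpha(N)}+Q_{\beta(N)}$, where $\alpha(N)=|B\cap N|$ and $\beta(N)=N-\alpha(N)$. Because the original series converges while $P$ diverges, $Q$ must diverge as well; so $P_\alpha+Q_\beta$ is a genuine ``$\infty-\infty$'' form whose asymptotic behavior is controlled by how $B$ apportions the two parameters.

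The next step is to attach to the pair $(A,\sum_n a_n)$ a control set $C_A\subseteq\N$ that witnesses the failure of $\{Q_k\}$ to be Cauchy: for some $\epsilon>0$, one picks $p_0<q_0<p_1<q_1<\cdots$ with $|Q_{q_i}-Q_{p_i}|>\epsilon$ for every $i$, and converts these indices back to $\N$ via the $d_{(\cdot)}$ enumeration. Applying Lemma~\ref{lem:b&d} to $C_A$ produces $B\in\mathcal B$ with $C_A$ not sparser than~$B$, so infinitely many pairs of consecutive elements of $B$ straddle two consecutive points of~$C_A$. A careful reading of $P_\alpha+Q_\beta$ at the endpoints of these ``fat'' $B$-gaps---where the $P$-contribution changes only by a single $A$-term $a_{a_k}\to0$ while the $Q$-contribution sweeps over a genuine oscillation of $\{Q_k\}$---will show that the rearranged partial sums fail the Cauchy criterion at infinitely many $N$, hence cannot converge.

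For (2) the same skeleton applies, but now $A\in\mathcal A$ makes $\sum_{n\in A}a_n$ diverge to $\pm\infty$, so $C_A$ is replaced by a growth-rate function quantifying how fast $P$ tends to $\pm\infty$, and Lemma~\ref{lem:b&d} for $\ger d$ supplies $B$ whose gaps eventually dominate that function, so that $|Q_\beta|$ dwarfs~$P_\alpha$ and the rearranged partial sums escape to $\pm\infty$. The main obstacle in both parts is the calibration of $C_A$ (or the growth-rate function): because $P$ and $Q$ diverge in opposite directions, the sum $P_\alpha+Q_\beta$ allows cancellation in principle, and the witnessing control set must isolate unidirectional changes in $Q$ that cannot be masked by the simultaneous evolution of $P$ within a single $B$-gap, so that the sparseness guaranteed by $\ger b$ (resp.\ $\ger d$) actually translates into non-convergence (resp.\ divergence to $\pm\infty$) of the rearrangement.
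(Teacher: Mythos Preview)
Your overall strategy---build shuffle permutations indexed by $\mathcal A\times\mathcal B$ and appeal to the sparseness characterization of $\ger b$ and $\ger d$---is the paper's as well. But you dualize: you send the $A$-subseries onto $B$ and the complement onto $\N\setminus B$, so that in a $B$-gap the $P$-part is frozen and the $Q$-part moves. The paper does the opposite, putting $A$ onto the dense set $\N\setminus B$ (via $p_{A,B}=s_{A,\N\setminus B}^{-1}$), so the rearrangement is essentially the divergent $A$-subseries with only isolated extra terms spliced in; this makes part~(2) almost immediate.

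There is a genuine gap in your definition of $C_A$. In a $B$-gap $(b_i,b_{i+1}]$ the $Q$-index runs over $\beta(N)=N-(i+1)\in[b_i-i,\,b_{i+1}-i-1]$, so to see the oscillation $|Q_{q_j}-Q_{p_j}|>\epsilon$ there you need $p_j,q_j$ to lie in that range of \emph{$Q$-indices}. Converting ``back to $\N$ via the $d_{(\cdot)}$ enumeration'' gives $C_A=\{d_{p_j},d_{q_j}:j\}$, and the condition $d_{p_j},d_{q_j}\in(b_i,b_{i+1})$ says nothing about where $p_j,q_j$ sit relative to $b_i-i$, since the offset $d_k-k=|A\cap[0,d_k)|$ depends on $A$, not on $B$. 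The repair is to leave $C_A=\{p_j,q_j:j\}$ in $Q$-index space and close $\mathcal B$ under the shift $\{b_i\}\mapsto\{b_i+i\}$---exactly the device the paper introduces (on the $A$-side) via its $m_k^A=|[1,m_k]\cap A|$ and its Claims~1 and~2. For~(2), your plan ``$|Q_\beta|$ dwarfs $P_\alpha$'' must simultaneously control both divergent sequences: since neither $P$ nor $Q$ is monotone, the growth function has to encode both (e.g.\ $g(i)$ large enough that $Q_k<-P_i-i$ for all $k\ge g(i)$). This can be made to work, but it is noticeably more delicate than the paper's route, where placing the $\pm\infty$-divergent subseries on the dense side means each inserted extra term is a single $a_n\to 0$, and escape to $\pm\infty$ follows directly.
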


\begin{pf}
Both parts of this theorem are proved by variations of the same argument. We will therefore undertake to prove both parts simultaneously. Where necessary, we will break the argument into cases, whenever separate considerations are necessary for proving these two statements.

Let $\mathcal A$ be a family of subsets of $\N$ such that, for any conditionally convergent series $\sum_{n \in \N}a_n$, the subseries $\sum_{n \in A}a_n$ diverges for some $A \in \mathcal A$. For the proof of $(2)$, let us further assume that $\sum_{n \in A}a_n = \infty$ for some $A \in \mathcal A$.

Let $\mathcal B$ be a family of subsets of $\N$. For the proof of $(1)$, we will suppose that $\mathcal B$ satisfies the first part of Lemma~\ref{lem:b&d}: no single subset of $\N$ is sparser than every $B \in \mathcal B$. For the proof of $(2)$, we will suppose instead that $\mathcal B$ satisfies the second part of Lemma~\ref{lem:b&d}: for every subset of $\N$, some $B \in \mathcal B$ is sparser.

We will find a family $\mathcal C$ of permutations of $\N$ such that $|\mathcal C| \leq |\mathcal A| \cdot |\mathcal B|$ and, for every conditionally convergent series $\sum_{n \in \N}a_n$, the rearrangement $\sum_{n \in \N}a_{p(n)}$ diverges, and moreover, for the proof of $(2)$, we will show that in fact $\sum_{n \in \N}a_{p(n)} = \infty$. This suffices to prove the theorem.

We may (and do) also assume that every $A \in \mathcal A$ is neither finite nor co-finite. This assumption is without loss of generality, because no (co-)finite subset of $\N$ is any use for defining a divergent subseries of a conditionally convergent series; thus removing the (co-)finite subsets from $\mathcal A$ does not change whether or not it has the required properties. Similarly, we may (and do) assume that every $B \in \mathcal B$ is neither finite nor co-finite. Also, for reasons that become apparent later in the proof, let us assume (again, without loss of generality) that $\mathcal B$ is closed under the operation
$$\{b_1,b_2,b_3,\dots\} \mapsto \{b_1+1,b_2+2,b_3+3,\dots,b_i+i,\dots\},$$
where $b_1 < b_2 < b_3 < \dots < b_i < \dots$.

For each $A \in \mathcal A$ and $B \in \mathcal B$, define the permutation $p_{A,B}$ to be the inverse of the shuffle $s_{A,\N \setminus B}$. By our assumptions about the members of $\mathcal A$ and $\mathcal B$, this function is well-defined for every $A \in \mathcal A$ and $B \in \mathcal B$. The idea is that the rearrangement $\sum_{n \in \N}a_{p_{A,B}(n)}$ looks like the subseries $\sum_{n \in A}a_n$ written onto the set $\N \setminus B$, with the leftover terms, those indexed by members of $\N \setminus A$, written on the (very sparse) set $B$. Let
$$\mathcal C = \{p_{A,B} : A \in \mathcal A \text{ and } B \in \mathcal B\}.$$
It is clear that $|\mathcal C| \leq |\mathcal A| \cdot |\mathcal B|$, so it remains to show that, for every conditionally convergent series $\sum_{n \in \N}a_n$, the rearrangement $\sum_{n \in \N}a_{p(n)}$ diverges in the required way for some $p \in \mathcal C$.

Let $\sum_{n \in \N}a_n$ be a conditionally convergent series, and fix $A \in \mathcal A$ such that $\sum_{n \in A}a_n$ diverges; furthermore, for the proof of $(2)$, let us assume that the divergence is in the required manner, i.e. $\sum_{n \in A}a_n = \infty$. 

We will now define a sparse subset of $\N$ that is meant to capture the rate at which the subseries $\sum_{n \in A}a_n$ diverges. We consider three cases:
\begin{itemize}
\item If $\sum_{n \in A}a_n = \infty$, then the partial sums $\sum_{n \in A \cap [1,m]}a_n$ increase without bound. We may therefore find an increasing sequence $m_1,m_2,m_3,\dots$ of natural numbers such that
$$ \sum_{n \in A \cap (m_k,m_{k+1}]}a_n > 1$$
for all $k \in \N$, and furthermore
$$ \sum_{n \in A \cap (m_k,j]}a_n > 1$$
for all $j > m_{k+1}$. (This second condition is only used in the proof of $(2)$, and can be ignored for the proof of $(1)$.)
\item If $\sum_{n \in A}a_n = -\infty$, then the partial sums $\sum_{n \in A \cap [1,m]}a_n$ decrease without bound.  We may therefore find an increasing sequence $m_1,m_2,m_3,\dots$ of natural numbers such that
$$ \sum_{n \in A \cap (m_k,m_{k+1}]}a_n < -1$$
for all $k \in \N$.
\item If $\sum_{n \in A}a_n$ diverges by oscillation, then we may find some $c > 0$ such that the partial sums $\sum_{n \in A \cap [1,m]}a_n$ undergo infinitely many oscillations of size greater than $c$. More precisely, we may find an increasing sequence 
$m_1, m_1', m_2, m_2', m_3, m_3', \dots$
of natural numbers such that, for every $k \in \N$,
$$ \sum_{n \in A \cap (m_k,m_k']}a_n > c \qquad \text{and} \qquad \sum_{n \in A \cap (m_k',m_{k+1}]}a_n < -c.$$
\end{itemize}

Note that proving $(1)$ requires us to consider all three of these cases, as we do not have any information on the manner of divergence of the subseries $\sum_{n \in A}a_n$. Proving $(2)$ requires us to consider only the first case. 
Let $M^A = \{m_1^A,m_2^A,m_3^A,\dots\}$, where
$$m_k^A = |[1,m_k] \cap A|$$
for all $k \in \N$.

\vspace{3mm}

\noindent \emph{Claim 1:} Let $B \in \mathcal B$ and $A \in \mathcal A$, and let the maps $i \mapsto a_i$ and $i \mapsto b_i$ be the unique increasing enumerations of $A$ and $B$, respectively. If $n \in \N$ and $b_{\ell} < n < b_{\ell+1}$ for some $\ell \in \N$, then $p_{A,B}(n) = a_{n-\ell}$.

\vspace{3mm}

\noindent \emph{Proof of claim.} 
This follows immediately from the definitions.
\hfill \qed

\vspace{3mm}

\noindent \emph{Claim 2:} Let $A \in \mathcal A$, let $B_0 \in \mathcal B$, and let
$B = \{b_1+1,b_2+2,b_3+3,\dots\}$
where $i \mapsto b_i$ is the unique increasing enumeration of $B_0$. If the interval $(m_k^A,m_{k+1}^A]$ does not contain any members of $B_0$, then the terms $a_n$ with $n \in A \cap (m_k,m_{k+1}]$ will appear in order and consecutively in the rearranged series $\sum_{n \in \N}a_{p_{A,B}(n)}$.

\vspace{3mm}

\noindent \emph{Proof of claim.} 
Suppose $(m_k^A,m_{k+1}^A] \cap B_0 = \emptyset$ and let $\ell = |[1,m_k^A] \cap B_0|$, so that
$$b_\ell + \ell < m_k^A+\ell < m_{k+1}^A+\ell < b_{\ell+1}+\ell+1$$
where $b_i$ denotes the $i^{\mathrm{th}}$ element of $B_0$ as above. Let $e_A: \N \to \N$ denote the unique increasing enumeration of $A$. Applying Claim 1,
\begin{align*}
p_{A,B}(m_k^A+\ell+j) & = e_A(m_k^A+\ell+j-\ell) = e_A(m_k^A+j) \\
& = \text{the }j^{\mathrm{th}} \text{ member of }A \cap (m_k,m_{k+1}]
\end{align*}
for all $j \in [1,m^A_{k+1}-m_k^A]$.
\hfill \qed

\vspace{3mm}

To finish the proof of the theorem, we will consider two cases, according to whether $\mathcal B$ is assumed to satisfy the first part of Lemma~\ref{lem:b&d}, for the proof of $(1)$, or the second part, for the proof of $(2)$. 

\vspace{3mm}

\noindent \emph{Case 1: $M^A$ is not sparser than every $B \in \mathcal B$.}

\vspace{3mm}

Fix $B_0 \in \mathcal B$ such that $M^A$ is not sparser than $B_0$, and let
$$B = \{b_1+1,b_2+2,b_3+3,\dots\}$$
where $i \mapsto b_i$ is the unique increasing enumeration of $B_0$. Recall that $\mathcal B$ is closed under this transformation, so that $B \in \mathcal B$. We claim that the rearranged series $\sum_{n \in \N}a_{p_{A,B}(n)}$ diverges.

By the definition of ``sparser than'' there are infinitely many values of $k$ such that the interval $(m_k^A,m_{k+1}^A]$ does not contain any members of $B_0$. By claim 2, for each such interval the terms $a_n$ with $n \in A \cap (m_k,m_{k+1}]$ will appear in order and consecutively in the rearranged series $\sum_{n \in \N}a_{p_{A,B}(n)}$. 
If $\sum_{n \in A}a_n = \infty$, then this observation, together with our choice of the $m_k$, guarantees that the partial sums of the rearranged series $\sum_{n \in \N}a_{p_{A,B}(n)}$ will infinitely often increase by $1$, which implies that $\sum_{n \in \N}a_{p_{A,B}(n)}$ diverges. (Note: it does not follow that this series diverges to $\infty$. It is possible that while the partial sums of the rearranged series infinitely often increase by one, they always decrease later on in such a way that the rearrangement diverges by oscillation.) Similarly, if $\sum_{n \in A}a_n = -\infty$ then the partial sums of the rearranged series $\sum_{n \in \N}a_{p_{A,B}(n)}$ will infinitely often decrease by $1$, again implying that $\sum_{n \in \N}a_{p_{A,B}(n)}$ diverges. Lastly, if $\sum_{n \in A}a_n$ diverges by oscillation, then there is some $c > 0$ such that the partial sums of the rearranged series $\sum_{n \in \N}a_{p_{A,B}(n)}$ will infinitely often oscillate by $c$, once again implying that $\sum_{n \in \N}a_{p_{A,B}(n)}$ diverges. This completes the proof of $(1)$.

\vspace{3mm}

Before moving on to the second case, we will articulate one more claim. The proof is omitted, as it is nearly identical to the proof of Claim 2.

\vspace{3mm}

\noindent \emph{Claim 3:} Let $A \in \mathcal A$, let $B_0 \in \mathcal B$, and let
$B = \{b_1+1,b_2+2,b_3+3,\dots\}$
where $i \mapsto b_i$ is the unique increasing enumeration of $B_0$. If the interval $(m_k^A,m_{k+1}^A]$ contains exactly one member of $B_0$, say $b_\ell = m_k^A+j$, then
\[
p_{A,B}(m_k^A+\ell-1+i)=
\begin{cases}
  i\th \text{ element of } A \cap (m_k,m_{k+1}) \\ 
  \qquad\qquad\qquad\qquad \text{ if } i < j \\
  (i-1)^{\mathrm{st}} \text{ element of } A \cap (m_k,m_{k+1}) \\ 
  \qquad\qquad\qquad\qquad \text{ if } j < i \leq m^A_{k+1}+1-m^A_k
\end{cases}
\]
In other words, the terms $a_n$ with $n \in A \cap (m_k,m_{k+1}]$ will appear in the rearranged series $\sum_{n \in \N}a_{p_{A,B}(n)}$ consecutively, except that there is an extra term inserted.

\vspace{3mm}

\noindent \emph{Case 2: some $B \in \mathcal B$ is sparser than $M^A$.}

\vspace{3mm}

Fix $B_0 \in \mathcal B$ such that $B$ is sparser than $M^A$, and let
$$B = \{b_1+1,b_2+2,b_3+3,\dots\}$$
where $i \mapsto b_i$ is the unique increasing enumeration of $B_0$. Observe that $B \in \mathcal B$.
Because $(1)$ has already been proved and we are now aiming only at a proof of $(2)$, we will assume that $\sum_{n \in A}a_n = \infty$. We claim that $\sum_{n \in \N}a_{p_{A,B}(n)} = \infty$ as well.

For all but finitely many values of $k$, the interval $(m_k^A,m_{k+1}^A]$ contains at most one point of $B_0$. Together with claims 2 and 3, this implies that we may partition $\N$ into intervals $I_1,I_2,I_3,\dots$ such that for all but finitely many values of $k$, $p_{A,B}$ maps $I_k$ to the set $A \cap (m_k,m_{k+1}]$ in an order-preserving fashion, with perhaps one exception, namely a single $j \in I_k$ mapping to some $n \notin A$. For all but finitely many $n$ we have $|a_n| < \nicefrac{1}{2}$; thus, with finitely many exceptions, if $j \in I_k$ with $p_{A,B}(j) \notin A$, then $|a_{p_{A,B}}(j)| < \nicefrac{1}{2}$.

Thus $\N$ can be divided into intervals $I_k$, and on all but finitely many of these intervals we have either
\begin{enumerate}[(i)]
\item all partial sums of $\sum_{n \in I_k}a_{p_{A,B}(n)}$ and $\sum_{n \in A \cap (m_k,m_{k+1}]}a_n$ are identical (this happens if $(m^A_k,m^A_{k+1}]$ contains no members of $B_0$), or
\item the partial sums of $\sum_{n \in I_k}a_{p_{A,B}(n)}$ and $\sum_{n \in A \cap (m_k,m_{k+1}]}a_n$ differ by less than $\nicefrac{1}{2}$ (this happens if $(m^A_k,m^A_{k+1}]$ contains one member of $B_0$).
\end{enumerate}

Together with our choice of the $m_k$, this is enough to conclude that $\sum_{n \in \N}a_{p_{A,B}(n)} = \infty$, completing the proof of $(2)$.
\end{pf}

We do not know whether either of the inequalities proved in this section can be strict. It was proved in \cite{rearrangement} that $\ger b \leq \rr$, so the consistency of $\rr < \max\{\ss,\ger b\}$ would imply the consistency of $\rr < \ss$. Even this latter, ostensibly easier problem remains open:

\begin{que}
Is $\rr < \ss$ consistent?
\end{que}

The twin question of whether $\ss < \rr$ is consistent will be answered affirmatively in Section~\ref{sec:Laver}.

\section{More sparse sets: $\sso \leq \max \{\ss,\ger b\}$} \label{ssovsss} 

In this section we present another argument involving sparse sets, akin to the proof in the previous section. This time we will prove a nontrivial relationship between two of the subseries numbers, $\ss$ and $\sso$.

\begin{thm}\label{thm:ssovsss}
$\sso \leq \max \{\ss,\ger b\}$.
\end{thm}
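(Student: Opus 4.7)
The plan is to adapt the sparse-set technique of Theorem~\ref{ssvsrr:thm}(1), producing subsets of $\N$ rather than permutations and arranging for oscillation in place of arbitrary divergence. Let $\mathcal A$ witness $\ss$ (closed under complementation without loss of generality) and let $\mathcal B$ witness $\ger b$ in the sparse-set sense of Lemma~\ref{lem:b&d}. For each pair $(A,B) \in \mathcal A \times \mathcal B$, letting $G_j = (b_{j-1}, b_j]$ denote the $j$-th gap of $B = \{b_1 < b_2 < \cdots\}$ (with $b_0 = 0$), define
\[
C_{A,B} \;=\; \bigcup_{j \text{ even}} (A \cap G_j) \;\cup\; \bigcup_{j \text{ odd}} ((\N \setminus A) \cap G_j)
\]
and let $C'_{A,B}$ be the parity-swapped variant. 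The proposed witness for $\sso$ is $\mathcal C = \mathcal A \cup \{C_{A,B}, C'_{A,B} : A \in \mathcal A,\, B \in \mathcal B\}$, of cardinality at most $|\mathcal A| \cdot |\mathcal B| = \max\{\ss, \ger b\}$.

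To verify, fix a conditionally convergent $\sum_n a_n$ and use $\ss$ to find $A \in \mathcal A$ with $\sum_{n \in A} a_n$ divergent. If that subseries already oscillates, $A \in \mathcal A \subseteq \mathcal C$ suffices. Otherwise, by complement-closure I may assume $\sum_{n \in A} a_n = +\infty$, whence $\sum_{n \in \N \setminus A} a_n = -\infty$. Choose ``block'' checkpoints $m_1 < m_2 < \cdots$ so that on each interval $(m_{k-1}, m_k]$ the $A$-subseries gains at least $1$ and the $(\N \setminus A)$-subseries loses at least $1$; let $M = \{m_k\}$. Use the sparse-set characterization of $\ger b$ to pick $B \in \mathcal B$ with $M$ not sparser than $B$, which supplies infinitely many ``good'' gaps $G_j$ containing at least two (and, by first thinning $M$ and reapplying $\ger b$, as many as needed) elements of $M$. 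A careful telescoping, with boundary contributions absorbed by the checkpoint spacing, then yields $\sum_{n \in A \cap G_j} a_n \geq 1$ and $\sum_{n \in (\N \setminus A) \cap G_j} a_n \leq -1$ on every good gap. Consequently the partial sums $S_j = \sum_{n \in C_{A,B} \cap [1, b_j]} a_n$ rise by at least $1$ across every good even-indexed gap and fall by at least $1$ across every good odd-indexed gap, producing oscillation whenever both parities of good gaps are infinite.

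The principal technical obstacle is the parity distribution of good gaps: $\ger b$ guarantees infinitely many good gaps but not that both parity classes are infinite. I plan to address this by enlarging $\mathcal B$ under certain closure operations at no cardinality cost, notably the every-other-element map $\{b_1, b_2, b_3, \ldots\} \mapsto \{b_2, b_4, b_6, \ldots\}$ which merges adjacent pairs of gaps, and the finite-perturbation map $B \mapsto B \setminus \{b_1\}$ which shifts every gap index. Together with the inclusion of both parity choices $C_{A,B}$ and $C'_{A,B}$ in $\mathcal C$, a case analysis on the possible parity distributions of good gaps should yield, in every case, some $C \in \mathcal C$ whose partial sums rise by $\geq 1$ infinitely often and fall by $\geq 1$ infinitely often, forcing the required oscillation.
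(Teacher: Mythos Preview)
Your parity-based scheme has a genuine gap at exactly the point you flag as ``the principal technical obstacle,'' and the proposed closure operations do not close it. Concretely: once $B$ is fixed, the set $X \subseteq \N$ of good gap indices can be an arbitrary infinite set, and you need $X$ to meet both parity classes infinitely. Removing $b_1$ shifts all indices by one, so if $X$ is (say) all even you now get all odd --- still only one parity. The every-other-element map sends the $j$-th new gap to $G_{2j-1}\cup G_{2j}$, so if $X=\{4,16,64,\ldots\}$ the new good-gap set is $\{2,8,32,\ldots\}$, again all even; iterating gets you nowhere. Including both $C_{A,B}$ and $C'_{A,B}$ does not help either: if $X$ has only one parity, one of these subseries has partial sums that only rise across good gaps and the other only fall, and neither oscillates. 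What you are really asking for is that the family $\{\text{evens},\text{odds}\}$ split every possible $X$, which is exactly a splitting problem and cannot be solved by two sets.

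The paper's proof supplies the missing ingredient: it brings in a genuine splitting family $\mathcal S$ and forms sets $C_{A,B,S}$ where $S\in\mathcal S$ (not parity) decides which gaps use $A$ and which use $\N\setminus A$. After building $X$ one then picks $S\in\mathcal S$ splitting $X$, guaranteeing infinitely many ``up'' gaps and infinitely many ``down'' gaps. The cardinality stays bounded because $\ger s\leq\ss$ by Theorem~\ref{aboves:thm}, so $|\mathcal A|\cdot|\mathcal B|\cdot|\mathcal S|\leq\max\{\ss,\ger b\}$. You should also note that the paper's checkpoint condition is much stronger than yours: it requires $\sum_{n\in A\cap(m_k,m_{k+1}]}a_n>\sum_{n\in[1,m_k]}|a_n|+1$, so that the block gain dominates \emph{everything} that came before; this is what makes the partial-sum estimates go through cleanly without a delicate boundary analysis.
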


\begin{la}\label{up/down}
Let $\sum_{n \in \N}a_n$ be a conditionally convergent series, and let $A \subseteq \N$. If $\sum_{n \in A}a_n = \infty$ then $\sum_{n \notin A}a_n = -\infty$, and if $\sum_{n \in A}a_n = -\infty$ then $\sum_{n \notin A}a_n = \infty$.
\end{la}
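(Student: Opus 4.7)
The plan is to reduce the lemma to the elementary fact that if $S_m \to L$ (finite) and $T_m \to \infty$, then $S_m - T_m \to -\infty$. Let $L = \sum_{n \in \N}a_n$, which is a finite real number by hypothesis, and let $S_m = \sum_{n=1}^m a_n$ be the partial sums, so that $S_m \to L$. For each $m$, split this finite sum by index into the two pieces
\[
T_m = \sum_{n \in A \cap [1,m]} a_n \qquad \text{and} \qquad U_m = \sum_{n \in [1,m] \setminus A} a_n,
\]
so that $S_m = T_m + U_m$ holds exactly for every $m$ (this is just a rearrangement of finitely many real numbers).

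The assumption $\sum_{n \in A}a_n = \infty$ is precisely the statement $T_m \to \infty$: the partial sums of the subseries, when indexed by how far one has scanned through $\N$, agree with the partial sums indexed by how many elements of $A$ one has used, because $T_m$ is constant on the intervals between successive elements of $A$. Consequently $U_m = S_m - T_m \to L - \infty = -\infty$, which is the statement $\sum_{n \notin A}a_n = -\infty$. The second half of the lemma is obtained by interchanging the roles of $A$ and $\N \setminus A$ (or equivalently by replacing $a_n$ with $-a_n$). There is no real obstacle; the only point worth flagging for the reader is the identification of $T_m \to \infty$ with the divergence of the subseries to $+\infty$, which is immediate from the definitions.
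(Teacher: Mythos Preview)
Your proof is correct and follows essentially the same approach as the paper: split each partial sum $S_m$ of the full series into the $A$-piece $T_m$ and the complement-piece $U_m$, and use the convergence of $S_m$ together with $T_m\to\infty$ to force $U_m\to-\infty$. The paper carries this out by hand with an $\epsilon$--$M$ estimate while you phrase it as the limit fact $S_m-T_m\to -\infty$, but the argument is the same.
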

\begin{pf}
Suppose $\sum_{n \in \N}a_n = c$. Given any $M > 0$, for large enough $k$ we have $\sum_{n \in A \cap [1,k]}a_n > M$ and $\sum_{n \in [1,k]}a_n$ within $1$ of $c$. Consequently, for large enough $k$ we have $\sum_{n \in [1,k] \setminus A}a_n < c-M+1$. This shows that $\sum_{n \in [1,k] \setminus A}a_n$ decreases without bound whenever $\sum_{n \in A \cap [1,k]}a_n$ increases without bound. Thus $\sum_{n \notin A}a_n = -\infty$ whenever $\sum_{n \in A}a_n = \infty$. A similar argument shows $\sum_{n \notin A}a_n = \infty$ whenever $\sum_{n \in A}a_n = -\infty$.
\end{pf}

This lemma shows that if we know a set $A$ on which our conditionally convergent sum goes to $\infty$, then we know a set, namely $\N \setminus A$, on which it goes to $-\infty$. This simple observation will be crucial to the proof of Theorem~\ref{thm:ssovsss}.

\begin{pf}[Proof of Theorem~\ref{thm:ssovsss}]
Let $\mathcal A$ be a family of subsets of $\N$ such that, for any conditionally convergent series $\sum_{n \in \N}a_n$ the subseries $\sum_{n \in A}a_n$ diverges for some $A \in \mathcal A$. Let $\mathcal B$ be a family of subsets of $\N$ such that no single subset of $\N$ is sparser than every $B \in \mathcal B$. Finally, let $\mathcal S$ be a family of subsets of $\N$ such that every infinite subset of $\N$ is split by some member of $\mathcal S$.

We will find a family $\mathcal C$ of subsets of $\N$ such that $|\mathcal C| \leq |\mathcal A| \cdot |\mathcal B| \cdot |\mathcal S|$ and, for any conditionally convergent series $\sum_{n \in \N}a_n$, the subseries $\sum_{n \in A}a_n$ diverges by oscillation for some $A \in \mathcal C$. This proves that $\sso \leq \max \{\ss,\ger b,\ger s\}$. As $\ger s \leq \ss$ by Theorem~\ref{aboves:thm}, this suffices to prove the theorem.

We may (and do) assume without loss of generality that if $A \in \mathcal A$ then $\N \setminus A \in \mathcal A$.

Let $A \in \mathcal A$, $B \in \mathcal B$, and $S \in \mathcal S$. Let $b_1,b_2,b_3,\dots$ denote the elements of $B$ in increasing order, and define
$$C_{A,B,S} = \left( \bigcup_{n \in S} A \cap (b_n,b_{n+1}] \right) \cup \left( \bigcup_{n \notin S} (b_n,b_{n+1}] \setminus A \right).$$
Let
$$\mathcal C = \mathcal A \cup \{C_{A,B,S} : A \in \mathcal A, B \in \mathcal B, \text{ and }S \in \mathcal S\}.$$
It is clear that $|\mathcal C| \leq |\mathcal A| \cdot |\mathcal B| \cdot |\mathcal S|$, so it remains to prove that for any conditionally convergent series $\sum_{n \in \N}a_n$, the subseries $\sum_{n \in C}a_n$ diverges by oscillation for some $C \in \mathcal C$.

Fix a conditionally convergent series $\sum_{n \in \N}a_n$. If there is some $A \in \mathcal A$ such that the subseries $\sum_{n \in A}a_n$ diverges by oscillation, then we are done, because $A \in \mathcal C$. Thus let us suppose that there is some $A \in \mathcal A$ such that either $\sum_{n \in A}a_n = \infty$ or $\sum_{n \in A}a_n = -\infty$. By replacing $A$ with $\N \setminus A$ if necessary (recall that $\mathcal A$ is closed under complementation), Lemma~\ref{up/down} shows that we may assume $\sum_{n \in A}a_n = \infty$.

Because the partial sums $\sum_{n \in A \cap [1,m]}a_n$ increase without bound, we may use recursion to find an increasing sequence $m_1,m_2,m_3,\dots$ of natural numbers such that
$$ \sum_{n \in A \cap (m_k,m_{k+1}]}a_n > \sum_{n \in [1,m_k]}|a_n|+1$$
for all $k \in \N$. Fix $B \in \mathcal B$ such that $M = \{m_1,m_2,m_3,\dots\}$ is not sparser than $B$.

Let $b_1,b_2,b_3,\dots$ denote the elements of $B$ in increasing order, and define
$$X = \{n \in \N : (b_n,b_{n+1}] \cap M \geq 2\}.$$
Because $M$ is not sparser than $B$, $X$ is infinite. Fix $S \in \mathcal S$ such that both $S \cap X$ and $X \setminus S$ are infinite.

We claim that $\sum_{n \in C_{A,B,S}}a_n$ diverges by oscillation. To see this, we will show that the values of the partial sums $\sum_{n \in C_{A,B,S} \cap [1,m]}a_n$ are infinitely often greater than $1$ and infinitely often less than $0$.

Let $\ell \in S \cap X$, and let $m_k, m_{k+1}$ denote the first two members of $M$ contained in the interval $(b_\ell,b_{\ell+1}]$. By the definition of $C_{A,B,S}$, we have $C_{A,B,S} \cap (m_k,m_{k+1}] = A \cap (m_k,m_{k+1}]$. By this observation, and by our choice of the $m_k$, we have
\begin{align*}
 \sum_{n \in C_{A,B,S} \cap [1,m_{k+1}]}a_n
& =  \sum_{n \in C_{A,B,S} \cap [1,m_k]}a_n +\sum_{n \in C_{A,B,S} \cap (m_k,m_{k+1}]}a_n \\
& \geq  \sum_{n \in C_{A,B,S} \cap [1,m_k]}-|a_n| +\sum_{n \in C_{A,B,S} \cap (m_k,m_{k+1}]}a_n \\
& =  -\sum_{n \in C_{A,B,S} \cap [1,m_k]}|a_n| +\sum_{n \in A \cap (m_k,m_{k+1}]}a_n \\
& > 1.
\end{align*}
Thus infinitely many of the partial sums of the subseries $\sum_{n \in C_{A,B,S}}a_n$ are greater than $1$.

To finish the proof, first pick $N$ large enough that for all $\ell \geq N$, if $m \geq b_\ell$ and $m' > n$, then $\sum_{n \in (m,m']}a_n < 1$. This is possible because $\sum_{n \in \N}a_n$ converges.

Let $\ell \in X \setminus S$ with $\ell \geq N$, and let $m_k, m_{k+1}$ denote the first two members of $M$ contained in the interval $(b_\ell,b_{\ell+1}]$. By the definition of $C_{A,B,S}$, we have $C_{A,B,S} \cap (m_k,m_{k+1}] = (m_k,m_{k+1}] \setminus A$, and by our choice of $N$ we have 
$$\textstyle \sum_{n \in (m_k,m_{k+1}] \setminus A}a_n < 1-\sum_{n \in A \cap (m_k,m_{k+1}]}a_n$$
Applying these observations, we have
\begin{align*}
 \sum_{n \in C_{A,B,S} \cap [1,m_{k+1}]}a_n
& =  \sum_{n \in C_{A,B,S} \cap [1,m_k]}a_n + \sum_{n \in C_{A,B,S} \cap (m_k,m_{k+1}]}a_n \\
& \leq  \sum_{n \in C_{A,B,S} \cap [1,m_k]}|a_n| + \sum_{n \in C_{A,B,S} \cap (m_k,m_{k+1}]}a_n \\
& =  \sum_{n \in C_{A,B,S} \cap [1,m_k]}|a_n| + \sum_{n \in (m_k,m_{k+1}] \setminus A}a_n \\
& <  \sum_{n \in C_{A,B,S} \cap [1,m_k]}|a_n| + \left( 1-\sum_{n \in A \cap (m_k,m_{k+1}]}a_n \right) \\
& < 0.
\end{align*}
Thus infinitely many of the partial sums of the subseries $\sum_{n \in C_{A,B,S}}a_n$ are less than $0$. This completes the proof that $\sum_{n \in C_{A,B,S}}a_n$ diverges by oscillation, which in turn completes the proof of the theorem.
\end{pf}

In the random real model, $\ger b = \aleph_1$ and $\cov L = \ger c$. By Theorem~\ref{abovecovN:thm}, this shows that it is consistent to have $\ger b < \ss$. In Section~\ref{sec:Laver} we will prove the consistency of $\ss < \ger b$. Thus there is no provable inequality between $\ss$ and $\ger b$, which shows that $\max \{ \ss, \ger b \}$ cannot simply be replaced with either $\ss$ or $\ger b$ in the statement of the previous theorem. Furthermore, the results in Section~\ref{sec:Laver} show that
$$\aleph_1 = \sso < \max \{\ss,\ger b\} = \aleph_2$$
in the Laver model, so that the inequality proved in this section is consistently strict.

\section{The Laver model: $\ss = \sso < \ger b = \rr$}\label{sec:Laver}

In this section we will prove that $\sso = \aleph_1$ in the Laver model. It is well-known that $\ger b = \aleph_2 = \ger c$ in the Laver model, so, combined with the inequalities $\ger b \leq \rr$ (which was proved in \cite{rearrangement}) and $\ss \leq \sso$, this result shows the consistency of $\ss < \rr$.

The idea of the proof is that we will define an intermediate cardinal characteristic, which we call the \emph{almost splitting number} and denote $\salmost$, then prove that $\sso \leq \salmost$ (always, not just in the Laver model), and finally prove that $\salmost = \aleph_1$ in the Laver model.

\begin{df}
Let $\bar I = \langle I_k : k \in \N \rangle$ be a sequence of finite subsets of $\N$ with $\max (I_n) < \min (I_{n+1})$ for every $n$. (Usually, in what follows, $\bar I$ will be a partition of $\N$ into finite intervals.) For each $k$, let $B_k \subseteq I_k$, and denote $\bar B = \langle B_k : k \in \N \rangle$. Let $\bar a = \langle a_n : n \in \N \rangle$ denote a sequence of real numbers, with $0 \leq a_n \leq 1$ for every $n$. If $r,s \geq 0$, we say that $(\bar I, \bar B, \bar a)$ is an $(r,s)$\emph{-sequence} provided that 
$$\lim_{k \to \infty} \sum_{n \in B_k}a_n \,=\, r \qquad \text{and} \qquad \lim_{k \to \infty} \sum_{n \in I_k \setminus B_k}a_n \,=\, s.$$
We say that an infinite set $D \subseteq \N$ \emph{almost splits} $(\bar I, \bar B, \bar a)$ if there is an infinite set $E \subseteq \N$ such that 
$$\lim_{k \in E} \sum_{n \in D \cap B_k}a_n \,=\, r \qquad \text{and} \qquad \lim_{k \in E} \sum_{n \in D \cap I_k \setminus B_k}a_n \,=\, 0.$$
We say that an infinite set $D \subseteq \N$ \emph{totally splits} $(\bar I, \bar B, \bar a)$ if there is an infinite set $E \subseteq \N$ such that for all $k \in E$ we have $D \cap I_k = B_k$.
\begin{itemize}
\item The \emph{almost splitting number}, denoted $\salmost$, is the least cardinality of a family $\mathcal D$ of subsets of $\N$ such that, for every countable family $(\bar I^m, \bar B^m, \bar a^m)$ of $(r^m,s^m)$-sequences, $m \in \N$, there is some $D \in \mathcal D$ almost splitting each one of them.
\item The \emph{total splitting number}, denoted $\stotal$, is defined like $\salmost$, except that we require $D$ to totally split the $(r^m,s^m)$-sequence $(\bar I^m, \bar B^m, \bar a^m)$ for every $m \in \N$.
\end{itemize}
\end{df}

If a set $D$ totally splits an $(r,s)$-sequence, then it also almost splits the sequence. It follows that $\salmost \leq \stotal$. 

The cardinal characteristic $\salmost$, though employed only as a supporting actor in our proof below, may have some independent interest. Indeed, $\salmost$ is closely related to the \emph{finitely splitting number} $\ger f \ger s$ that was defined by Kamburelis and W\textpolhook{e}glorz in \cite{K&W}. They proved that $\ger f \ger s = \max \{ \ger b, \ger s \}$. Additionally, it is not hard to show 
$$\max \{ \ger b, \ger s \} \,=\, \ger f \ger s \,\leq\, \stotal \,\leq\, \non M.$$
The first inequality is a direct consequence of the definitions, and the second inequality is proved by an argument similar to that in Theorem~\ref{belownonM:thm} above.

Thus it would seem that slight alterations in the definition of $\salmost$ result in cardinals $\ger f \ger s$ and $\stotal$ that are provably $\geq \ger b$. Despite this, we will show that $\salmost < \ger b$ in the Laver model.

\begin{thm}
$\sso \leq \salmost$.
\end{thm}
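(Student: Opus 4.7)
The plan is to show that $\mathcal D$ witnessing $\salmost$ (closed under complementation, which does not change cardinality) also witnesses $\sso$. Given a conditionally convergent series $\sum_{n\in\N}c_n$, I will associate to it a countable family of $(r,s)$-sequences whose simultaneous almost-splitting by some $D\in\mathcal D$ forces $\sum_{n\in D}c_n$ to diverge by oscillation. After normalizing so that $|c_n|\leq 1$, and with $P=\{n:c_n>0\}$, $N=\{n:c_n\leq 0\}$, I would build a partition $\N=\bigsqcup_k I_k$ into intervals $I_k=(m_{k-1},m_k]$, where $m_k$ is the least $m>m_{k-1}$ with $\sum_{n\in P\cap(m_{k-1},m]}c_n\geq 1$. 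Because $c_n\to 0$ the overshoot vanishes and $\sum_{n\in I_k\cap P}c_n\to 1$; since the full partial sums $\sum_{n\leq m_k}c_n$ tend to the limit $L$ of the series, the total $\sum_{n\in I_k}c_n$ tends to $0$, which also forces $-\sum_{n\in I_k\cap N}c_n\to 1$. Setting $a_n=|c_n|$, the triples $(\bar I,\bar B^+,\bar a)$ with $B^+_k=I_k\cap P$ and $(\bar I,\bar B^-,\bar a)$ with $B^-_k=I_k\cap N$ are both $(1,1)$-sequences.

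The countable family of $(r,s)$-sequences I would associate to $\sum c_n$ consists of $\mathcal S^\pm$ together with auxiliary ``block'' sequences $\mathcal S^{(n)}$ for each $n\in\N$, whose role is to prevent the subseries partial sums from drifting off to $\pm\infty$. For $\mathcal S^{(n)}$ I group intervals into blocks $J^{(n)}_k=I_{2n(k-1)+1}\cup\cdots\cup I_{2nk}$ of length $2n$ and let $B^{(n)}_k$ consist of the positive parts of the first $n$ constituent $I_j$'s together with the negative parts of the latter $n$; with $a_n=|c_n|$ this is a $(2n,2n)$-sequence. By the definition of $\salmost$, some $D\in\mathcal D$ almost-splits the entire countable family simultaneously. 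Almost-splitting $\mathcal S^\pm$ yields disjoint infinite sets $E^\pm$ along which $\delta^D_k:=\sum_{n\in D\cap I_k}c_n\to\pm 1$, so $\delta^D_k\not\to 0$ and $\sum_{n\in D}c_n$ cannot converge. Almost-splitting each $\mathcal S^{(n)}$ pins down $D$'s behavior on the blocks $J^{(n)}_k$ (for $k$ in an infinite subset) so that the contributions of the first $n$ intervals sum to approximately $+n$ and those of the latter $n$ to approximately $-n$, forcing the partial sum of the subseries to return (up to asymptotically vanishing error) to its value at the start of the block.

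The first two steps---normalization and construction of the sequences---are relatively routine. The main technical obstacle will be the last step: verifying that the block-level return-to-start behavior imposed by the $\mathcal S^{(n)}$, each holding only along its own infinite set of block indices, really does rule out divergence of the subseries partial sums to $\pm\infty$. This requires exploiting how the almost-splitting at every scale $n$ constrains $D$'s behavior across increasingly long stretches of intervals, and coordinating this with the basic $\pm 1$ contributions forced by $\mathcal S^\pm$ so as to conclude that neither $\limsup S_m^D=+\infty$ with $\liminf S_m^D=+\infty$ nor the symmetric pathology can occur. Once this is in hand, the non-convergence coming from $\mathcal S^\pm$ together with the boundedness (or proper two-sided oscillation) coming from the block sequences gives divergence by oscillation, completing the proof.
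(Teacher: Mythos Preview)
Your approach has a genuine gap at exactly the point you flag as the main obstacle. The block sequences $\mathcal S^{(n)}$ do not suffice to rule out divergence to $\pm\infty$: almost-splitting only pins down $D$'s behavior on an \emph{uncontrolled} infinite set $E^{(n)}$ of blocks at each scale, and these constraints cannot be glued together. Concretely, there is a set $D$ that almost splits your entire countable family while $\sum_{n\in D}c_n = +\infty$. Build it by deciding, for each interval $I_i$, whether $D\cap I_i = I_i\cap P$ (a ``$+$'' interval, contributing $\delta^D_i\approx +1$) or $D\cap I_i = I_i\cap N$ (a ``$-$'' interval, contributing $\approx -1$). Make almost every interval a ``$+$'', but for each $n$ sprinkle the pattern $+^n-^n$ onto infinitely many blocks $J^{(n)}_k$, choosing all these special blocks disjoint and so sparse that the total number of ``$-$'' intervals below position $N$ is $o(N)$. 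This $D$ almost splits every $\mathcal S^{(n)}$ (via the special blocks), almost splits $\mathcal S^+$ (via the abundant ``$+$'' intervals), almost splits $\mathcal S^-$ (via the inserted ``$-$'' intervals), yet each special block contributes net zero to the partial sums while every other interval contributes $+1$, so the partial sums drift to $+\infty$.

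The paper's proof avoids this by abandoning uniform intervals. It chooses intervals $J_k$ with $\sum_{n\in J_k}|a_n|\approx 5^k$, sets $I_k=J_{2k-1}\cup J_{2k}$, $B_k=(P\cap J_{2k-1})\cup(N\cap J_{2k})$, and normalizes $x_n=|a_n|/5^k$ for $n\in J_k$ to produce a \emph{single} $(1,1)$-sequence. If $D$ almost splits this one sequence, then along the witnessing $E$ one computes $\sum_{n\in D\cap J_{2k-1}}a_n > \tfrac14\cdot 5^{2k-1}$ and $\sum_{n\in D\cap J_{2k}}a_n < -\tfrac14\cdot 5^{2k}$; since $\sum_{\ell<2k-1}5^\ell < \tfrac14\cdot 5^{2k-1}$, each such swing dominates the entire preceding partial sum, forcing the subseries partial sums above $\tfrac14$ and below $-\tfrac14$ infinitely often. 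The geometric growth is the missing idea: it converts a single almost-splitting constraint along one infinite set $E$ into a global oscillation guarantee, with no need to coordinate separate constraints across scales.
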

\begin{proof}
Let $\mathcal D$ be a family of infinite subsets of $\N$ that satisfies the definition of $\salmost$. In fact, we will not need the full force of the definition, but may content ourselves with the following fact: for every $r,s \geq 1$, every $(r,s)$-sequence is almost split by some $D \in \mathcal D$. We will show that $\mathcal D$ also satisfies the definition of $\sso$: for every conditionally convergent series $\sum_{n \in \N} a_n$ of real numbers, there is some $D \in \mathcal D$ such that the subseries $\sum_{n \in D} a_n$ diverges by oscillation.

Let $\sum_{n \in \N}a_n$ be a conditionally convergent series of real numbers. Without loss of generality, we may assume that $-1 \leq a_n \leq 1$ for every $n$. Let
$$P = \set{n \in \N}{a_n \geq 0} \qquad \text{and} \qquad N = \set{n \in \N}{a_n < 0}.$$
Using recursion, we will now define a partition of the natural numbers into finite intervals. To begin, let $J_1$ be the largest interval in $\N$ containing $1$ and having the property that $\sum_{n \in J_1}|a_n| \leq 5$ (this interval is finite because $\sum_{n \in \N}|a_n| = \infty$). Supposing $J_1, J_2, \dots, J_{k-1}$ are already defined, choose $J_k$ to be the largest interval in $\N$ of the form $[\max J_{k-1}+1,\ell]$ and having the property that $\sum_{n \in J_k}|a_n| \leq 5^k$. This recursive construction defines a sequence $\langle J_k : k \in \N \rangle$ of consecutive intervals of natural numbers with the following useful property:

\vspace{3mm}

\noindent \emph{Claim:}
$\displaystyle \lim_{k \to \infty} \left( \frac{5^k}{2} - \sum_{n \in J_k \cap P}a_n \right) = 0 \ $  and 
$\displaystyle \ \lim_{k \to \infty} \left( \frac{5^k}{2} + \sum_{n \in J_k \cap N}a_n \right) = 0$.

\vspace{3mm}

\noindent \emph{Proof of claim.} 
Our construction of the $J_k$ implies 
$$\lim_{k \to \infty} \left( 5^k - \sum_{n \in J_k}|a_n| \right) = \lim_{k \to \infty} |a_{\max J_k +1}| = 0.$$
Because $\sum_{n \in \N}a_n$ converges conditionally,
$$\lim_{k \to \infty} \left( \sum_{n \in J_k \cap P}a_n + \sum_{n \in J_k \cap N}a_n \right) = 0.$$
The claim is readily deduced from these observations. \hfil \qed \vspace{3mm}

In other words, for large enough $k$ we have $\sum_{n \in P \cap J_k}a_n \approx \nicefrac{5^k}{2}$ and $\sum_{n \in N \cap J_k}a_n \approx \nicefrac{-5^k}{2}$.

We are now ready to define the $(r,s)$-sequence to which we will apply our family $\mathcal D$. Let
\begin{itemize}
\item $I_k = J_{2k-1} \cup J_{2k}$ for every $k \in \N$,
\item $B_k = (P \cap J_{2k-1}) \cup (N \cap J_{2k})$ for every $k \in \N$, and
\item $x_n = \nicefrac{|a_n|}{5^k}$ for every $n \in \N$, where $k$ is taken to be the unique natural number with $n \in J_k$.
\end{itemize}
Let $\bar I = \langle I_k : k \in \N \rangle$, $\bar B = \langle B_k : k \in \N \rangle$, and $\bar x = \langle x_n : n \in \N \rangle$, and observe that $(\bar I, \bar B, \bar x)$ is a $(1,1)$-sequence.

By our assumptions about the family $\mathcal D$, there is some $D \in \mathcal D$ that almost splits $(\bar I, \bar B, \bar x)$. We will show that the sum $\sum_{n \in D}a_n$ diverges by oscillation to complete the proof of the theorem.

Fix an infinite set $E \subseteq \N$ such that 
$$\lim_{k \in E} \sum_{n \in D \cap B_k}x_n \,=\, 1 \qquad \text{and} \qquad \lim_{k \in E} \sum_{n \in D \cap I_k \setminus B_k}x_n \,=\, 0.$$

From our claim and the definition of the $x_k$ and $B_k$, it follows that
$$\lim_{k \to \infty} \sum_{n \in J_{2k-1} \cap B_k}x_k \,=\, \lim_{k \to \infty} \sum_{n \in J_{2k} \cap B_k}x_k \,=\, \frac{1}{2}.$$
Combining this with the equation $\lim_{k \in E} \sum_{n \in D \cap B_k}x_n = 1$ and the fact that $B_k \subseteq I_k = J_{2k-1} \cup J_{2k}$, we see that
$$\lim_{k \to \infty} \sum_{n \in D \cap B_k \cap J_{2k-1}}x_n \,=\, \lim_{k \to \infty} \sum_{n \in D \cap B_k \cap J_{2k}}x_n \,=\, \frac{1}{2}.$$
In particular,
\begin{equation}
\sum_{n \in D \cap B_k \cap J_{2k-1}}x_n \,=\, \sum_{n \in D \cap P \cap J_{2k-1}}x_n \,>\, \frac{3}{8} 
\end{equation}
\begin{equation}
\sum_{n \in D \cap B_k \cap J_{2k}}x_n \,=\, \sum_{n \in D \cap N \cap J_{2k}}x_n \,>\, \frac{3}{8} 
\end{equation}
for large enough values of $k$, when $k \in E$. Similarly, we also have
\begin{equation}
\sum_{n \in (D \setminus B_k) \cap J_{2k-1}}x_n \,=\, \sum_{n \in D \cap N \cap J_{2k-1}}x_n \,<\, \frac{1}{8} 
\end{equation}
\begin{equation}
\sum_{n \in (D \setminus B_k) \cap J_{2k}}x_n \,=\, \sum_{n \in D \cap P \cap J_{2k}}x_n \,<\, \frac{1}{8} 
\end{equation}
for sufficiently large $k \in E$. Using our definition of the $x_k$, we may multiply both sides of $(1)$ and $(3)$ by $\pm 5^{2k-1}$ and both sides of $(2)$ and $(4)$ by $\pm 5^{2k}$ to obtain
\begin{equation}\tag{$1'$}
\sum_{n \in D \cap P \cap J_{2k-1}}a_n \,>\, \frac{3}{8} 5^{2k-1}
\end{equation}
\begin{equation}\tag{$2'$}
\sum_{n \in D \cap N \cap J_{2k}}a_n \,<\, -\frac{3}{8} 5^{2k}
\end{equation}
\begin{equation}\tag{$3'$}
\sum_{n \in D \cap N \cap J_{2k-1}}a_n \,>\, -\frac{1}{8} 5^{2k-1}
\end{equation}
\begin{equation}\tag{$4'$}
\sum_{n \in D \cap P \cap J_{2k}}a_n \,<\, \frac{1}{8} 5^{2k}
\end{equation}
for sufficiently large $k \in E$.
Combining $(1')$ with $(3')$ and $(2')$ with $(4')$, we obtain
$$\sum_{n \in D \cap J_{2k-1}}a_n \,>\, \frac{1}{4} 5^{2k-1}$$
$$\sum_{n \in D \cap J_{2k}}a_n \,<\, -\frac{1}{4} 5^{2k}$$
for sufficiently large $k \in E$.

Thus the partial sums of $\sum_{n \in D}a_n$ increase by at least $\nicefrac{5^{2k-1}}{4}$ and then decrease by at least $\nicefrac{5^{2k}}{4}$ on the interval $I_k$, for infinitely many values of $k$, namely all sufficiently large $k \in E$. Furthermore,
$$\sum_{n < \min I_k}|a_n| \,=\, \sum_{\ell \leq 2k-2} \left( \sum_{n \in J_\ell} |a_n| \right) \,<\, \sum_{\ell \leq 2k-2} 5^\ell \,=\, \frac{1}{4}\left( 5^{2k-1}-1 \right)$$
for all $k \in \N$.
Thus, for all sufficiently large $k \in E$, we have
\begin{align*}
\sum_{n \in D,\, n \leq \max J_{2k-1}}a_n \,\geq\, & \left( -\sum_{n < \min I_k}|a_n| \right) + \left( \sum_{n \in D \cap J_{2k-1}}a_n \right) \\
>\, & -\frac{1}{4}\left( 5^{2k-1}-1 \right) + \frac{1}{4} 5^{2k-1} \,=\, \frac{1}{4} \ \ \ \ \text{and} \\ & \\
\sum_{n \in D,\, n \leq \max J_{2k}}a_n \,\leq\, & \left( \sum_{n < \min J_{2k}}|a_n| \right) + \left( \sum_{n \in D \cap J_{2k}}a_n \right) \\
<\, & \frac{1}{4}\left( 5^{2k}-1 \right) - \frac{1}{4} 5^{2k} \,=\, -\frac{1}{4}.
\end{align*}

Thus we see that the partial sums of the series $\sum_{n \in D}a_n$ are infinitely often greater than $\nicefrac{1}{4}$ and infinitely often less than $-\nicefrac{1}{4}$. It follows that $\sum_{n \in D}a_n$ diverges by oscillation.
\end{proof}

In the remainder of this section we assume the reader is familiar with the method of forcing. $\mathbb L$ denotes the \emph{Laver forcing}, which is the set of all Laver trees, ordered by inclusion. A \emph{Laver tree} is a set $T$ of finite sequences of natural numbers such that
\begin{itemize}
\item $T$ is a \emph{tree}, which means that $T$ contains all initial segments of any member of $T$,
\item $T$ has a \emph{stem}, which is a sequence $s \in T$ with the property that every other member of $T$ either extends $s$ or is an initial segment of $s$, and
\item every member of $T$ extending the stem $s$ has infinitely many immediate successors.
\end{itemize}
Because $\mathbb L$ is ordered by inclusion, stronger conditions are trees containing fewer sequences. This notion of forcing is proper. The \emph{Laver model} refers to any model obtained by an $\omega_2$-stage countable-support iteration of the Laver forcing over a model of GCH.

We fix some notation for Laver forcing $\LL$. For $S,T \in \LL$ we write $S \leq_0 T$ if $S \leq T$ and $\stem (S) = \stem (T)$.
For $\sigma \in T$ with $\stem (T) \sub \sigma$, $T_\sigma = \{ \tau \in T : \tau \sub \sigma$ or $\sigma \sub \tau \}$ is
the {\em subtree of $T$ given by $\sigma$}. Clearly $\stem (T_\sigma) = \sigma$. Next, $\succ_T (\sigma) = \{ n
\in \omega : \sigma \ha n \in T \}$ is the {\em successor level} of $\sigma$ in $T$. We say that $F \sub T$ is a 
{\em front} if for every $x \in [T]$ there is a unique $\sigma \in F$ with $\sigma \sub x$. A front is in particular
a maximal antichain in $T$ (but a maximal antichain need not be a front). 

\begin{mainlem}
Assume $T \in \LL$, $r, s \geq 0$, and $\epsilon > 0$ are reals, 
and $\dot I$, $\dot B$, $\dot C$, $(\dot a_k : k \in \dot I)$, $\dot b$, and $\dot c$ are $\LL$-names such
that $T$ forces
\begin{itemize}
\item $\dot I \sub \omega$ is finite, $\dot B \sub \dot I$, $\dot C = \dot I \sem \dot B$,
\item all $\dot a_k$ are reals between $0$ and $1$, $\sum_{k \in \dot B} \dot a_k = \dot b$, $\sum_{k \in \dot C} \dot a_k = \dot c$,
\item $| \dot b - r |, |\dot c - s | < \epsilon$.
\end{itemize}
Also assume that no subtree of $T$ with the same stem decides the value of $\min (\dot I)$. Then there are 
$S \leq_0 T$, a front $F \sub S$, sequences $\bar I_\sigma = (I_{\sigma,n} : n \in \succ_S (\sigma))$,
$\bar B_\sigma = (B_{\sigma,n} : n \in \succ_S (\sigma))$, and $\bar a_\sigma = (a_{\sigma,k} : k \in \omega )$,
and reals $r_\sigma, s_\sigma \geq 0$ for all $\sigma \in S$ with $\stem (S) \sub \sigma$ and $\sigma \subsetneq \tau$
for some $\tau \in F$ such that all $(\bar I_\sigma, \bar B_\sigma, \bar a_\sigma)$ are $(r_\sigma, s_\sigma)$-sequences
and such that whenever $D \in \omoms$ almost splits all $(\bar I_\sigma, \bar B_\sigma, \bar a_\sigma)$, then there is
$S' \leq_0 S$ such that $S'_\sigma = S_\sigma$ for all $\sigma \in F \cap S'$ and
\begin{itemize}
\item $S' \forces | \sum_{k \in \dot B \cap D} \dot a_k - r | < 2 \epsilon$ and $ \sum_{k \in \dot C \cap D} \dot a_k < 2 \epsilon$.
\end{itemize}
\end{mainlem}

\begin{proof}
Let $\epsilon_\sigma$, $\sigma \in \omlom$, be such that $3 \sum \{ \epsilon_\sigma : \sigma \in \omlom \} \leq \epsilon$
and $\sum \{ \epsilon_\tau : \sigma \subsetneq \tau \} \leq \epsilon_\sigma$ for all $\sigma \in \omlom$.

We introduce a rank $\rk$ on $T$ as follows. 
\begin{itemize}
\item $\rk (\sigma) = 0$ if there is $S \leq T$ with $\stem (S) = \sigma$ such that $S$ decides $\max (\dot I)$. 
\item for $\alpha > 0$: $\rk (\sigma) = \alpha$ if $\neg \rk (\sigma) < \alpha$ and $\{ n \in \succ_T (\sigma) :
   \rk(\sigma \ha n) < \alpha \}$ is infinite.
\end{itemize}
A standard argument shows that every node has a rank and, by pruning $T$ appropriately, we may assume
that if $\sigma \subset \tau$ then
\begin{itemize}
\item $\rk (\sigma) > 0$ implies $\rk (\tau ) < \rk (\sigma)$,
\item $\rk (\sigma) = 0$ implies $\rk (\tau) = 0$.
\end{itemize}
Let $F$ be the set of all $\sigma$ with $\rk (\sigma) = 0$ and $\rk (\tau) > 0$ for all $\tau \subset \sigma$.
Then $F$ clearly is a front. Also note that by assumption we have $\rk (\stem (T)) > 0$.

Now fix $\sigma \in F$. By pure decision and by pruning $T_\sigma$, if necessary, we may assume that
there are $I^\sigma$, $B^\sigma$, $C^\sigma$, $(a^\sigma_k : k \in I^\sigma)$, $b^\sigma$, and $c^\sigma$
such that 
\begin{itemize}
\item $I^\sigma$ is finite, $B^\sigma \sub I^\sigma$, $C^\sigma = I^\sigma \sem B^\sigma$,
\item all $a^\sigma_k$ are reals between $0$ and $1$, $\sum_{k \in B^\sigma} a^\sigma_k = b^\sigma$, 
   $\sum_{k \in  C^\sigma}  a^\sigma_k = c^\sigma$,
\item $T_\sigma$ forces $\dot I = I^\sigma$, $\dot B = B^\sigma$, $\dot C = C^\sigma$, and 
   $|\dot a_k - a^\sigma_k | < {\epsilon_\sigma \over |I^\sigma| }$.
\end{itemize}
In particular, $T_\sigma$ forces $|\dot b - b^\sigma | , |\dot c - c^\sigma | < \epsilon_\sigma$. It follows that
$|b^\sigma - r | < \epsilon + \epsilon_\sigma$ and $|c^\sigma - s | < \epsilon + \epsilon_\sigma$.

By induction on rank and by pruning $T$ appropriately along the way,
we produce $\bar I_\sigma$, $\bar B_\sigma$, $\bar C_\sigma$, $\bar a_\sigma$, $r_\sigma$, and $s_\sigma$
(in case $\rk (\sigma) > 0$), as well as $I^\sigma$, $B^\sigma$, $C^\sigma$, $(a^\sigma_k : k \in I^\sigma)$, $b^\sigma$, and $c^\sigma$
such that
\begin{enumerate}
\item $I^\sigma$ is finite, $B^\sigma \sub I^\sigma$, $C^\sigma = I^\sigma \sem B^\sigma$,
\item all $a^\sigma_k$ are reals between $0$ and $1$, $\sum_{k \in B^\sigma} a^\sigma_k = b^\sigma$, 
   $\sum_{k \in  C^\sigma}  a^\sigma_k = c^\sigma$,
\item $b^\sigma < r + 2 \epsilon$, $c^\sigma < s + 2 \epsilon$,
\item $(\bar I_\sigma, \bar B_\sigma, \bar a_\sigma)$ is an $(r_\sigma,s_\sigma)$-sequence,
\item $\lim \{b^{\sigma \ha n} : n \in \succ_T (\sigma) \} \geq r_\sigma + b^\sigma > \lim \{  b^{\sigma \ha n} : n \in \succ_T (\sigma) \} - \epsilon_\sigma$,
   and similarly with $b,r$ replaced by $c,s$,
\item $|r_\sigma + b^\sigma - b^{\sigma \ha n} | < \epsilon_\sigma$ for all $n \in \succ_T (\sigma)$,
   and similarly with $b,r$ replaced by $c,s$,
\item $I_{\sigma,n}$ and $I^\sigma$ are disjoint and $ I^\sigma \cup I_{\sigma,n} \sub I^{\sigma \ha n}$,
\item $a_{\sigma, k} = a_k^{\sigma \ha n}$ for $k \in I_{\sigma,n}$, $a^\sigma_k = \lim \{ a^{\sigma \ha n}_k : n \in
   \succ_T (\sigma) \}$ for $k \in I^\sigma$,
\item $| a^\sigma_k - a^{\sigma \ha n}_k | < { \epsilon_\sigma \over | I^\sigma | }$ for $n \in \succ_T (\sigma)$ and $k \in I^\sigma$.
\end{enumerate}

In case $\rk (\sigma) = 0$, the necessary items have been produced above. So assume $\rk (\sigma) > 0$.
Then $\rk (\sigma \ha n) < \rk (\sigma)$ for all $n \in \succ_T (\sigma)$. In particular, we have $I^{\sigma \ha n}$,
$B^{\sigma \ha n}$, $C^{\sigma \ha n}$, $(a^{\sigma\ha n}_k : k \in I^{\sigma\ha n})$, $b^{\sigma\ha n}$, and $c^{\sigma \ha n}$
for $n \in \succ_T (\sigma)$. Using clause 3 and the fact that bounded sequences have convergent subsequences and pruning $\succ_T (\sigma)$,
if necessary, we may assume that $r^\sigma = \lim \{ b^{\sigma\ha n} : n \in \succ_T (\sigma) \}$ and 
$s^\sigma = \lim \{ c^{\sigma \ha n} : n \in \succ_T (\sigma) \}$ 
both exist. We may also assume that there are (possibly infinite and possibly empty) sets $\tilde I^\sigma$, $\tilde B^\sigma \sub \tilde I^\sigma$,
and $\tilde C^\sigma = \tilde I^\sigma \sem \tilde B^\sigma$ such that 
\begin{itemize}
\item if $k \in \tilde B^\sigma$, then $k \in B^{\sigma\ha n}$ for almost all $n \in \succ_T (\sigma)$,
\item if $k \in \tilde C^\sigma$, then $k \in C^{\sigma\ha n}$ for almost all $n \in \succ_T (\sigma)$, and
\item if $k \not\in \tilde I^\sigma$, then $k \not\in I^{\sigma\ha n}$ for almost all $n \in \succ_T (\sigma)$.
\end{itemize}
Next we may assume that for $k \in \tilde I^\sigma$, $a^\sigma_k = \lim \{ a^{\sigma\ha n}_k : n \in \succ_T (\sigma)$ and
$k \in I^{\sigma \ha n} \}$ exists. Let $\tilde b^\sigma = \sum \{ a^\sigma_k : k \in \tilde B^\sigma \}$ and
$\tilde c^\sigma = \sum \{  a^\sigma_k : k \in \tilde C^\sigma \}$. 

\begin{sclaim}
$r^\sigma \geq \tilde b^\sigma$ and $s^\sigma \geq \tilde c^\sigma$.
\end{sclaim}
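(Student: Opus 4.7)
The plan is to exploit the nonnegativity of the terms $a^{\sigma\ha n}_k$ to pass to a lower bound via finite subsets of $\tilde B^\sigma$ (respectively $\tilde C^\sigma$), and then take a supremum. I will prove $r^\sigma \geq \tilde b^\sigma$; the argument for $s^\sigma \geq \tilde c^\sigma$ is identical, with $B$ replaced by $C$, $b$ by $c$, and $r$ by $s$.

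First I would observe that since $\tilde b^\sigma = \sum\{a^\sigma_k : k \in \tilde B^\sigma\}$ is a sum of nonnegative reals, we have
\[
\tilde b^\sigma = \sup\left\{ \sum_{k \in F} a^\sigma_k : F \sub \tilde B^\sigma \text{ finite}\right\}.
\]
Thus it suffices to show that $r^\sigma \geq \sum_{k \in F} a^\sigma_k$ for every finite $F \sub \tilde B^\sigma$.

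Fix such an $F$. By definition of $\tilde B^\sigma$, for every $k \in F$ we have $k \in B^{\sigma \ha n}$ for almost all $n \in \succ_T(\sigma)$; since $F$ is finite, there is some $n_0$ such that $F \sub B^{\sigma\ha n}$ for all $n \in \succ_T(\sigma)$ with $n \geq n_0$. Since all the $a^{\sigma \ha n}_k$ are nonnegative and $B^{\sigma \ha n} \sub I^{\sigma \ha n}$, for every such $n$ we have
\[
b^{\sigma \ha n} = \sum_{k \in B^{\sigma \ha n}} a^{\sigma \ha n}_k \;\geq\; \sum_{k \in F} a^{\sigma \ha n}_k.
\]
Also, for each $k \in F \sub \tilde I^\sigma$, we have $a^\sigma_k = \lim\{ a^{\sigma \ha n}_k : n \in \succ_T(\sigma), k \in I^{\sigma \ha n}\}$, and $F$ is finite, so $\sum_{k \in F} a^{\sigma \ha n}_k \to \sum_{k \in F} a^\sigma_k$ as $n \to \infty$ along $\succ_T(\sigma)$. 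Taking $n \to \infty$ in the displayed inequality and using $r^\sigma = \lim\{b^{\sigma\ha n} : n \in \succ_T(\sigma)\}$, we obtain $r^\sigma \geq \sum_{k \in F} a^\sigma_k$, as desired.

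Taking the supremum over all finite $F \sub \tilde B^\sigma$ yields $r^\sigma \geq \tilde b^\sigma$. The analogous argument using $C^{\sigma\ha n}$, $c^{\sigma \ha n}$, $\tilde C^\sigma$, and $s^\sigma$ gives $s^\sigma \geq \tilde c^\sigma$. No serious obstacle arises here; the only subtlety is that $\tilde B^\sigma$ and $\tilde C^\sigma$ may be infinite, which is handled precisely by the reduction to finite subsets above, and this reduction is legitimate because the terms $a^\sigma_k$ are all nonnegative.
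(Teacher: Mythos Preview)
Your proof is correct and follows essentially the same approach as the paper's: both reduce to finite subsets of $\tilde B^\sigma$, use that each such subset is eventually contained in $B^{\sigma\ha n}$, bound $b^{\sigma\ha n}$ below by the corresponding finite partial sum via nonnegativity, and pass to the limit. The only cosmetic difference is that the paper frames this as a proof by contradiction (assuming $\tilde b^\sigma > r^\sigma$ and deriving $b^{\sigma\ha n} > b^{\sigma\ha n}$), whereas you argue directly via the supremum characterization of $\tilde b^\sigma$; your presentation is slightly cleaner but the content is the same.
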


\begin{proof}
Suppose this is false, and let $\delta > 0$ and $k_0$ be such that $\sum \{ a^\sigma_k : k < k_0$ and
$k \in \tilde B^\sigma \} > r^\sigma + \delta$. Let $n \in \succ_T (\sigma)$ be such that $r^\sigma > b^{\sigma\ha n} - {\delta \over 2}$
and $k \in B^{\sigma\ha n}$ and $| a^{\sigma \ha n}_k - a^\sigma_k | < {\delta \over 2 k_0}$ for all
$k \in \tilde B^\sigma$ with $k < k_0$. Then 
\begin{align*}
b^{\sigma \ha n} & \geq \sum \{ a^{\sigma\ha n}_k : k < k_0 \mbox{ and }
k \in \tilde B^\sigma \} \\ & \geq \sum \{ a^\sigma_k : k < k_0 \mbox{ and }k \in \tilde B^\sigma \} - {\delta \over 2} \\
& > r^\sigma + {\delta \over 2} > b^{\sigma \ha n},
\end{align*}
a contradiction. The proof of $s^\sigma \geq \tilde c^\sigma$ is analogous.
\end{proof}

Now let $r_\sigma = r^\sigma - \tilde b^\sigma$ and $s_\sigma = s^\sigma - \tilde c^\sigma$. 
Also let $I^\sigma$ be a finite initial segment of $\tilde I^\sigma$ such that if $B^\sigma = I^\sigma \cap \tilde B^\sigma$
and $C^\sigma = I^\sigma \cap \tilde C^\sigma$, then $b^\sigma := \sum \{  a_k^\sigma : k \in B^\sigma\}
> \tilde b^\sigma - \epsilon_\sigma$ and $c^\sigma := \sum \{  a_k^\sigma : k \in C^\sigma\}
> \tilde c^\sigma - \epsilon_\sigma$. If $\tilde I^\sigma$ is finite we may simply let $I^\sigma = \tilde I^\sigma$.
By pruning $T$ if necessary, we may also assume that $I^\sigma \sub I^{\sigma \ha n}$ for all
$n \in \succ_T (\sigma)$.

\begin{sclaim}
For every $\delta > 0$ and every large enough $k_0$ there is $n_0$ such that if $n \geq n_0$ belongs to
$\succ_T (\sigma)$, then $|\sum \{ a_k^{\sigma \ha n} : k \in B^{\sigma \ha n} $ and $k \geq k_0 \} - r_\sigma| < \delta$
and $|\sum \{ a_k^{\sigma \ha n} : k \in C^{\sigma \ha n} $ and $k \geq k_0 \} - s_\sigma| < \delta$.
\end{sclaim}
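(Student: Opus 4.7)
The plan is to write
$$\sum \{a_k^{\sigma \ha n} : k \in B^{\sigma \ha n} \text{ and } k \geq k_0\} \;=\; b^{\sigma \ha n} \;-\; \sum \{a_k^{\sigma \ha n} : k \in B^{\sigma \ha n} \text{ and } k < k_0\},$$
and argue that, for suitable $k_0$ and $n_0$, the right-hand side lies within $\delta$ of $r^\sigma - \tilde b^\sigma = r_\sigma$. The argument for $C^{\sigma \ha n}$ and $s_\sigma$ will be entirely parallel.

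First I would fix $k_0$ using only information about $\sigma$. By the previous sub-claim, $\tilde b^\sigma \leq r^\sigma < \infty$, so the series $\sum_{k \in \tilde B^\sigma} a_k^\sigma$ converges; similarly $\sum_{k \in \tilde C^\sigma} a_k^\sigma$ converges. Choose $k_0$ large enough that the tail $\sum \{a_k^\sigma : k \in \tilde B^\sigma, \, k \geq k_0\}$ is below $\delta / 3$, and likewise for $\tilde C^\sigma$. This determines $k_0$ independently of $n$.

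Next I would pick $n_0$ by exploiting the three properties we set up during the inductive construction: (i) for each $k \in \tilde I^\sigma$ we have $a_k^{\sigma \ha n} \to a_k^\sigma$ along $n \in \succ_T(\sigma)$; (ii) for $k \in \tilde B^\sigma$, eventually $k \in B^{\sigma \ha n}$, and similarly for $\tilde C^\sigma$; (iii) for $k \notin \tilde I^\sigma$, eventually $k \notin I^{\sigma \ha n}$. Since there are only finitely many $k < k_0$ to worry about, there is a single $n_0$ such that, for every $n \geq n_0$ in $\succ_T(\sigma)$, all these eventual statements hold on $\{k : k < k_0\}$, each $|a_k^{\sigma \ha n} - a_k^\sigma| < \delta/(3 k_0)$, and moreover $|b^{\sigma \ha n} - r^\sigma| < \delta/3$ and $|c^{\sigma \ha n} - s^\sigma| < \delta/3$ by the definitions of $r^\sigma$ and $s^\sigma$.

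Combining these three ingredients, for any $n \geq n_0$ in $\succ_T(\sigma)$ we get
$$\sum \{a_k^{\sigma \ha n} : k \in B^{\sigma \ha n}, \, k < k_0\} = \sum \{a_k^{\sigma \ha n} : k \in \tilde B^\sigma, \, k < k_0\},$$
which is within $\delta/3$ of $\sum \{a_k^\sigma : k \in \tilde B^\sigma, \, k < k_0\}$, which is within $\delta/3$ of $\tilde b^\sigma$. Subtracting this from $b^{\sigma \ha n}$ (which is within $\delta/3$ of $r^\sigma$) gives the bound $|\sum \{a_k^{\sigma \ha n} : k \in B^{\sigma \ha n}, \, k \geq k_0\} - r_\sigma| < \delta$, as required; the computation for $C^{\sigma \ha n}$ is identical. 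I do not anticipate any genuine obstacle: the content is just a careful bookkeeping of three $\delta/3$ errors, and the only subtlety is ensuring $k_0$ is chosen before $n_0$, which is automatic because its choice depends only on the already-fixed quantities $a_k^\sigma, \tilde B^\sigma, \tilde C^\sigma$.
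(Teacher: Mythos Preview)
Your proposal is correct and essentially identical to the paper's own argument: both split off the partial sum below $k_0$, use convergence of $\sum_{k\in\tilde B^\sigma} a_k^\sigma$ to pick $k_0$ with tail below $\delta/3$, then choose $n_0$ so that $B^{\sigma\ha n}\cap k_0=\tilde B^\sigma\cap k_0$, $|a_k^{\sigma\ha n}-a_k^\sigma|<\delta/(3k_0)$ for $k<k_0$, and $|b^{\sigma\ha n}-r^\sigma|<\delta/3$, and finish with the same three-term triangle inequality.
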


\begin{proof} 
Let $k_0$ be such that $| \sum \{ a_k^\sigma : k \in \tilde B^\sigma \cap k_0 \} - \tilde b^\sigma | < {\delta \over 3}$.
Note that for large enough $n \in \succ_T (\sigma)$, we have $B^{\sigma\ha n} \cap k_0 = \tilde B^\sigma \cap k_0$
and $C^{\sigma \ha n} \cap k_0 = \tilde C^\sigma \cap k_0$. Fix $n_0$ such that for all $n \geq n_0$ in $\succ_T (\sigma)$,
we have $|b^{\sigma \ha n} - r^\sigma | < {\delta \over 3 }$ and for all $k < k_0$ in $\tilde B^\sigma$, 
$|a^{\sigma \ha n}_k - a^\sigma_k | < {\delta \over 3 k_0}$.
Then
\begin{align*} 
\left| \sum \{  a_k^{\sigma \ha n} : \right. &  \left. k \in B^{\sigma \ha n}  \mbox{ and }k \geq k_0 \} - r_\sigma \right| \\ &= \left|\left(b^{\sigma \ha n} - 
\sum \{ a_k^{\sigma \ha n} : k \in B^{\sigma \ha n} \cap k_0 \} \right)- (r^\sigma - \tilde b^\sigma )\right|\\ & < {\delta \over 3} + 
\left|\sum \{ a_k^{\sigma \ha n} : k \in B^{\sigma \ha n} \cap k_0 \} - \sum \{ a_k^{\sigma} : k \in B^{\sigma \ha n} \cap k_0 \}\right|
\\ & +  \left|\sum \{ a_k^{\sigma} : k \in B^{\sigma \ha n} \cap k_0 \} - \tilde b^\sigma\right| <  3 {\delta \over 3} = \delta 
\end{align*}
as required. Similarly for $C^{\sigma \ha n}$ and $s_\sigma$.
\end{proof}

Thus, by pruning $\succ_T (\sigma)$ if necessary, we may find $I_{\sigma, n} \sub I^{\sigma \ha n} \sem I^\sigma$,
$B_{\sigma,n} = I_{\sigma,n} \cap B^{\sigma\ha n}$, and $C_{\sigma,n} = I_{\sigma,n} \cap C^{\sigma\ha n}$ such that
$\max (I_{\sigma,n} ) < \min (I_{\sigma,m})$ for $n < m$ in $\succ_T (\sigma)$ and, letting $b_{\sigma,n} = \sum
\{ a_k^{\sigma \ha n} : k \in B_{\sigma,n} \}$ and $c_{\sigma,n} = \sum\{ a_k^{\sigma \ha n} : k \in C_{\sigma,n} \}$
for $n \in \succ_T (\sigma)$, we have $\lim \{ b_{\sigma,n} : n \in \succ_T (\sigma) \} = r_\sigma$ and 
$\lim \{ c_{\sigma,n} : n \in \succ_T (\sigma) \} = s_\sigma$. In particular, clause 4 is satisfied. Let $a_{\sigma,k} = a_k^{\sigma \ha n}$
for $k \in I_{\sigma,n}$. Note that \[r^\sigma = \tilde b^\sigma + r_\sigma \geq b^\sigma + r_\sigma > \tilde b^\sigma - \epsilon_\sigma
+ r_\sigma = r^\sigma - \epsilon_\sigma,\] and similarly with $b$ and $r$ replaced by $c$ and $s$, 
so that clause 5 holds. Pruning $\succ_T (\sigma)$ if necessary,
clause 6 follows, and clause 9 can be guaranteed for the same reason. 
Clauses 1, 2, 3, 7, and 8 are obvious by definition. This completes the recursive construction.

Note that if $\sigma = \stem (T)$ (the final step of the recursion), then, by the assumption that no $S \leq_0 T$ decides $\min (\dot I)$,
we necessarily must have $I^\sigma = B^\sigma = C^\sigma = \emptyset$ and $b^\sigma = c^\sigma = 0$.

Now let $S$ be the tree obtained from $T$ by the various pruning operations described above.

Applying repeatedly clause 7, we see that for $\sigma \in F$ we have 
\[  \bigcup \{ I_{\sigma \re j , \sigma (j) } : | \stem (T)| \leq j < |\sigma | \} \sub I^\sigma, \]
with an analogous inclusion relation holding for $B^\sigma$ and $C^\sigma$. Similar considerations give us:

\begin{sclaim}   \label{claim3}
For $\sigma \in F$, \[\left| \sum \{ r_{\sigma \re j} : | \stem (T) | \leq j < |\sigma| \} - b^\sigma \right| < \sum \{ \epsilon_{\sigma \re j} :
| \stem (T) | \leq j < |\sigma| \}\] and similarly with $r$ and $b$ replaced by $s$ and $c$.
\end{sclaim}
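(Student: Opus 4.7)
The plan is to derive the claim by a straightforward telescoping of clause 6 of the recursive construction along the initial segments from $\stem (T)$ up to $\sigma$. Clause 6 asserts that for every $\sigma' \subsetneq \sigma$ with $|\sigma'| \geq |\stem (T)|$ and every $n \in \succ_T (\sigma')$, the quantity $b^{\sigma' \ha n}$ differs from $r_{\sigma'} + b^{\sigma'}$ by strictly less than $\epsilon_{\sigma'}$. In other words, moving one level up the tree shifts $b$ by approximately $r_{\sigma'}$, with error controlled by $\epsilon_{\sigma'}$.

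Setting $\ell = |\stem (T)|$ and $\delta_j = b^{\sigma \re (j+1)} - r_{\sigma \re j} - b^{\sigma \re j}$ for $\ell \leq j < |\sigma|$, clause 6 gives $|\delta_j| < \epsilon_{\sigma \re j}$ for each such $j$. The finite telescoping identity
\[
 b^\sigma - b^{\stem (T)} \;=\; \sum_{j = \ell}^{|\sigma| - 1} \bigl( b^{\sigma \re (j+1)} - b^{\sigma \re j} \bigr) \;=\; \sum_{j = \ell}^{|\sigma| - 1} r_{\sigma \re j} \;+\; \sum_{j = \ell}^{|\sigma|-1} \delta_j,
\]
combined with the observation immediately preceding the claim that $b^{\stem (T)} = 0$ (because no $S \leq_0 T$ decides $\min (\dot I)$, forcing $I^{\stem (T)} = \emptyset$), yields directly
\[
 \Bigl| b^\sigma - \sum_{j = \ell}^{|\sigma| - 1} r_{\sigma \re j} \Bigr| \;\leq\; \sum_{j = \ell}^{|\sigma| - 1} |\delta_j| \;<\; \sum_{j = \ell}^{|\sigma| - 1} \epsilon_{\sigma \re j}.
\]
The final strict inequality requires the telescoping sum to be nonempty, which is automatic: since $\rk (\stem (T)) > 0$ we have $\stem (T) \notin F$, hence $\sigma \supsetneq \stem (T)$, so at least one index $j$ occurs. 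The analogous bound for $c^\sigma$ in terms of the $s_{\sigma \re j}$ is obtained by repeating the argument verbatim using the twin estimate in clause 6 for $c$ and $s$.

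No real obstacle is anticipated here. The proof is purely mechanical given the construction, and the only mildly delicate point is ensuring the final inequality is strict rather than weak, which is handled by the observation that $\stem (T) \notin F$.
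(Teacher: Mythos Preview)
Your proof is correct and is essentially the same telescoping argument as the paper's: both use $b^{\stem(T)}=0$, rewrite $b^\sigma$ as a telescoping sum of the increments $b^{\sigma\re(j+1)}-b^{\sigma\re j}$, and apply clause~6 termwise to bound each increment by $r_{\sigma\re j}$ up to an error of $\epsilon_{\sigma\re j}$. The only cosmetic difference is that the paper adds and subtracts $\sum b^{\sigma\re j}$ rather than introducing your $\delta_j$ explicitly; your remark on strictness (that $\stem(T)\notin F$ forces the sum to be nonempty) is a nice touch the paper leaves implicit.
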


\begin{proof}
Using that $b^{\stem (T)} = 0$, we see
\begin{align*}
\left| \sum r_{\sigma \re j} - b^\sigma \right| &= \left| \sum \left( r_{\sigma \re j} + b^{\sigma \re j} \right) - \sum b^{\sigma \re j} - b^\sigma \right| \\
&= \left| \sum \left( r_{\sigma \re j} + b^{\sigma \re j} \right) - \sum b^{\sigma \re (j+1)} \right| \\ & \leq \sum \left| r_{\sigma \re j} + b^{\sigma \re j}
- b^{\sigma \re (j+1)} \right|  < \sum \epsilon_{\sigma \re j} 
\end{align*}
where all sums are taken over $j$ with $| \stem (T) | \leq j < |\sigma|$ and where the last inequality holds by repeatedly applying clause 6.
\end{proof}

We also obtain:

\begin{sclaim}   \label{claim4}
For $\sigma \in F$, $j$ with $|\stem (T)| \leq j < |\sigma|$ and $k \in I_{\sigma \re j, \sigma (j) }$, 
\[ \left| a^\sigma_k - a_{\sigma \re j ,k} \right|  < { \epsilon_{\sigma \re j }  \over | I^{\sigma \re (j+1)} | }. \] 
\end{sclaim}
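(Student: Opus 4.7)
The plan is to reduce the claim to a telescoping estimate along the branch from $\sigma \re (j+1)$ up to $\sigma$, with the one-step errors controlled by clause 9 and summed using the pre-chosen summable weights $\epsilon_\tau$.

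First I would apply clause 8 at the node $\sigma \re j$: since $k \in I_{\sigma \re j, \sigma(j)}$, the first half of that clause (with $\sigma$ there replaced by $\sigma \re j$ and $n$ by $\sigma(j)$) gives $a_{\sigma \re j, k} = a^{\sigma \re (j+1)}_k$. So it suffices to bound $\big|a^\sigma_k - a^{\sigma \re (j+1)}_k\big|$ by $\epsilon_{\sigma \re j}/|I^{\sigma \re (j+1)}|$.

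For the telescoping, I would invoke clause 7, which in particular gives $I^\tau \sub I^{\tau \ha n}$ for every relevant $\tau$ and $n$; consequently the sets $I^{\sigma \re i}$ are nondecreasing in $i$, so $k$ lies in each of $I^{\sigma \re (j+1)}, \dots, I^\sigma$. This legitimizes applying clause 9 at each node $\sigma \re i$ with $j+1 \leq i < |\sigma|$ to get $\big|a^{\sigma \re i}_k - a^{\sigma \re (i+1)}_k\big| < \epsilon_{\sigma \re i}/|I^{\sigma \re i}|$. The triangle inequality together with $|I^{\sigma \re i}| \geq |I^{\sigma \re (j+1)}|$ for these $i$ then yields
$$\big|a^\sigma_k - a^{\sigma \re (j+1)}_k\big| \;<\; \sum_{i=j+1}^{|\sigma|-1} \frac{\epsilon_{\sigma \re i}}{|I^{\sigma \re i}|} \;\leq\; \frac{1}{|I^{\sigma \re (j+1)}|} \sum_{i=j+1}^{|\sigma|-1} \epsilon_{\sigma \re i},$$
the first strict inequality coming from any one telescoping step (the degenerate case $|\sigma| = j+1$ is immediate with left-hand side $0$).

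Finally, I would close the estimate using the summability condition on the $\epsilon_\tau$'s set up at the start of the proof, namely $\sum\{\epsilon_\tau : \sigma \re j \subsetneq \tau\} \leq \epsilon_{\sigma \re j}$, which bounds $\sum_{i=j+1}^{|\sigma|-1} \epsilon_{\sigma \re i} \leq \epsilon_{\sigma \re j}$ and delivers the desired strict inequality $\epsilon_{\sigma \re j}/|I^{\sigma \re (j+1)}|$. I do not anticipate any real obstacle here: this is a purely bookkeeping verification that the uniform-approximation errors deliberately built into the recursion accumulate as advertised, and no ideas beyond clauses 7, 8, and 9 together with the summability of the $\epsilon_\tau$'s are required.
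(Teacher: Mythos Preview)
Your proposal is correct and follows essentially the same argument as the paper: identify $a_{\sigma\re j,k}=a^{\sigma\re(j+1)}_k$ via clause~8, telescope along the branch using clause~9, replace each $|I^{\sigma\re i}|$ by $|I^{\sigma\re(j+1)}|$ via clause~7, and bound the resulting sum of $\epsilon$'s by the summability hypothesis. Your explicit treatment of the degenerate case $|\sigma|=j+1$ is a small bonus.
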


\begin{proof}
Since $a_{\sigma \re j ,k} = a_k^{\sigma \re ( j + 1)}$, we see
\begin{align*}
\left| a^\sigma_k - a_{\sigma \re j ,k} \right| &  \leq \sum \left\{ \left| a^{\sigma \re (i+1)}_k - a^{\sigma \re i}_k \right|  :  j < i < |\sigma | \right\} \\
& < \sum \left\{ { \epsilon_{\sigma \re i} \over | I^{\sigma \re i} | }: j < i < |\sigma | \right\} \\
& \leq 
 { \sum \left\{  \epsilon_{\sigma \re i}  : j < i < |\sigma | \right\}   \over | I^{\sigma \re (j+1)}| } < { \epsilon_{\sigma \re j }\over |I^{\sigma \re (j+1)} |}
\end{align*}
by clause 9, because $I^{\sigma \re (j+1)} \sub I^{\sigma \re i}$ for $j < i < |\sigma|$, and because $\sum \{  \epsilon_{\sigma \re i}  : 
j < i < |\sigma | \} < \epsilon_{\sigma \re j}$.
\end{proof}

Now assume $D \in \omoms$ almost splits all $(\bar I_\sigma, \bar B_\sigma, \bar a_\sigma)$ with $\stem (T) \sub \sigma \subsetneq \tau$
for some $\tau \in F$. This means in particular that for each such $\sigma$ there are infinitely many
$n \in \succ_T (\sigma)$ such that 
\[ \left| \sum \{ a_{\sigma , k}  : k \in B_{\sigma , n} \cap D \} - r_\sigma \right| < \epsilon_\sigma \mbox{ and } 
   \sum \{ a_{\sigma , k}  : k \in C_{\sigma , n} \cap D \} < \epsilon_\sigma. \]
Hence we can easily build $S' \leq_0 S$ such that $S'_\sigma = S_\sigma$ and
\[ \left| \sum \{ a_{\sigma \re j, k}  : k \in B_{\sigma \re j, \sigma (j)} \cap D \} - r_{\sigma \re j} \right| < \epsilon_{\sigma \re j} \qquad \mbox{ and } \]\[
   \sum \{ a_{\sigma \re j, k}  : k \in C_{\sigma \re j, \sigma (j)} \cap D \}  < \epsilon_{\sigma \re j} \]
hold for all $j$ with $|\stem (T) | \leq j < |\sigma|$ and all $\sigma \in F \cap S'$.
We need to prove that $S'$ is as required.

Fix $\sigma \in F \cap S'$. It clearly suffices to show that 
\[ S_\sigma \forces \left| \sum_{k \in \dot B \cap D} \dot a_k - \dot b \right| < \epsilon \mbox{ and }  \sum_{k \in \dot C \cap D} \dot a_k < \epsilon. \]
We only do the first part; the second part is similar and simpler.
Also note that $\sum_{k \in \dot B \cap D} \dot a_k$ is obviously forced to be less or equal than $\dot b$.
Hence the next claim completes the proof of the main lemma.

\begin{sclaim}
$S_\sigma$ forces $\sum_{k \in \dot B \cap D} \dot a_k > \dot b - \epsilon$.
\end{sclaim}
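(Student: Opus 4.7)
The plan is to bound the sum $\sum_{k \in \dot B \cap D} \dot a_k$ from below by a telescoping chain of approximations, each contributing only a small error. Because $S_\sigma \leq T_\sigma$, the condition $S_\sigma$ forces $\dot B = B^\sigma$ and $|\dot a_k - a^\sigma_k| < \epsilon_\sigma/|I^\sigma|$ for every $k \in I^\sigma$, so in any extension by $S_\sigma$ the quantity in question equals $\sum_{k \in B^\sigma \cap D} \dot a_k$, which differs from the deterministic $\sum_{k \in B^\sigma \cap D} a^\sigma_k$ by at most $|B^\sigma| \cdot \epsilon_\sigma / |I^\sigma| \leq \epsilon_\sigma$.

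Next I would use clause 7, applied repeatedly, to observe that $B^\sigma$ contains the pairwise disjoint union of the sets $B_{\sigma \re j, \sigma(j)}$ for $|\stem(T)| \leq j < |\sigma|$, together with the fact that all $a^\sigma_k$ are non-negative, to obtain the lower bound
\[
\sum_{k \in B^\sigma \cap D} a^\sigma_k \,\geq\, \sum_{j} \,\sum_{k \in B_{\sigma \re j, \sigma(j)} \cap D} a^\sigma_k.
\]
Claim 4 then lets me replace each $a^\sigma_k$ by $a_{\sigma \re j, k}$ at a cost of at most $\epsilon_{\sigma \re j} / |I^{\sigma \re (j+1)}|$ per term, hence at most $\epsilon_{\sigma \re j}$ per value of $j$ (since $B_{\sigma \re j, \sigma(j)} \sub I^{\sigma \re (j+1)}$). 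The defining property of the chosen $S'$ then gives
\[
\sum_{k \in B_{\sigma \re j, \sigma(j)} \cap D} a_{\sigma \re j, k} \,\geq\, r_{\sigma \re j} - \epsilon_{\sigma \re j}
\]
for each $j$, and Claim 3 consolidates $\sum_j r_{\sigma \re j}$ into $b^\sigma$ with an additional error bounded by $\sum_j \epsilon_{\sigma \re j}$. Finally, $|\dot b - b^\sigma| < \epsilon_\sigma$ gives $b^\sigma \geq \dot b - \epsilon_\sigma$.

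Collecting all of the accumulated errors yields a total loss of at most
\[
2\epsilon_\sigma + 3 \sum_{|\stem(T)| \leq j < |\sigma|} \epsilon_{\sigma \re j} \,\leq\, 3 \sum_{\tau \in \omlom} \epsilon_\tau \,\leq\, \epsilon,
\]
by the initial choice of the $\epsilon_\tau$'s. This produces the desired inequality $\sum_{k \in B^\sigma \cap D} \dot a_k > \dot b - \epsilon$ in every extension by $S_\sigma$, as required.

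The argument is essentially bookkeeping: every step is an application of a previously established clause or claim, and the summability assumption on the $\epsilon_\tau$ was tailor-made to absorb the accumulated approximation errors. The only place any real care is needed is in keeping straight which $I^\tau$ an index $k$ lies in so that the right scaling factor is used when invoking Claim 4; everything else is a straightforward chain of inequalities.
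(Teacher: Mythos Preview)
Your proposal is correct and follows essentially the same approach as the paper: the same decomposition of $B^\sigma$ into the disjoint sets $B_{\sigma\re j,\sigma(j)}$, the same use of Claim~4 to pass from $a^\sigma_k$ to $a_{\sigma\re j,k}$, the same appeal to the choice of $S'$ and to Claim~3, and the same final tally $2\epsilon_\sigma + 3\sum_j \epsilon_{\sigma\re j} \leq \epsilon$. The only cosmetic difference is the order in which you introduce the two $\epsilon_\sigma$ errors (passing between $\dot a_k$ and $a^\sigma_k$, and between $\dot b$ and $b^\sigma$); the paper absorbs them at the end rather than at the beginning.
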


\begin{proof}
Let $N = |\stem (T)|$. Forgetting about names for the moment we compute
\begin{align*}
\sum_{k \in B^\sigma \cap D} a^\sigma_k & \geq \sum_{N \leq j < |\sigma|} \ \ \sum_{k \in B_{\sigma \re j, \sigma (j) }\cap D} a^\sigma_k \\
   & > \sum_{N \leq j < |\sigma|} \ \ \sum_{k \in B_{\sigma \re j, \sigma (j) } \cap D} a_{\sigma \re j, k} - \sum_{N \leq j < |\sigma|}
   \epsilon_{\sigma \re j} \\
   & > \sum_{N \leq j < |\sigma|}  r_{\sigma \re j} - 2 \sum_{N \leq j < |\sigma|}  \epsilon_{\sigma \re j} \ 
   > \ b^\sigma - 3 \sum_{N\leq j < |\sigma|}  \epsilon_{\sigma \re j} 
\end{align*}   
where the first inequality holds by $\bigcup \{ B_{\sigma \re j , \sigma (j) } : N \leq j < |\sigma | \} \sub B^\sigma$,
the second by Claim~\ref{claim4} and because $B_{\sigma \re j, \sigma (j)} \sub I_{\sigma \re j, \sigma (j) } \sub I^{\sigma \re (j+1)}$, 
the third by the choice of $S'$, and the forth by Claim~\ref{claim3}.
Hence
\begin{align*}
S_\sigma \forces \sum_{k \in \dot B \cap D} \dot a_k & > \sum_{k \in B^\sigma \cap D} a^\sigma_k - \epsilon_\sigma 
\ > \ b^\sigma - 3 \sum_{N \leq j < |\sigma|}  \epsilon_{\sigma \re j} - \epsilon_\sigma \\ & > \ \dot b 
- 3 \sum_{N \leq j < |\sigma|}  \epsilon_{\sigma \re j} - 2  \epsilon_\sigma  \ > \ \dot b - 
3 \sum_{N \leq j \leq |\sigma|}  \epsilon_{\sigma \re j} \ > \ \dot b - \epsilon
\end{align*}   
as required.
\end{proof}
\end{proof}

\begin{la}
Let $( \dot{\bar I}^m , \dot{\bar B}^m, \dot{\bar a}^m )$, $m \in \omega$, be $\LL$-names for $(\dot r^m, \dot s^m)$-sequences. Let
$T \in \LL$. Then there are $S \leq_0 T$ and $(r^j , s^j)$-sequences $(\bar I^j, \bar B^j, \bar a^j)$, $j \in\omega$,
such that whenever $D \in \omoms$ almost splits all $(\bar I^j , \bar B^j , \bar a^j)$, then there is $S' \leq_0 S$ such that
\[ S' \forces D \mbox{ almost splits all } (\dot{\bar I}^m , \dot{\bar B}^m, \dot{\bar a}^m). \]
\end{la}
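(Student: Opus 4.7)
The plan is a double fusion in $\LL$ with the Main Lemma as the single-step engine. First, I would enumerate $\omega \times \omega$ as $\{(m_n, \ell_n) : n \in \omega\}$ so that each $m$ appears as a first coordinate infinitely often, and fix positive reals $\epsilon_n$ tending to $0$.

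For the construction, I build a fusion sequence $T = T_0 \geq_0 T_1 \geq_1 T_2 \geq_2 \cdots$ together with fronts $F_0, F_1, F_2, \ldots$ (with $F_{n+1}$ lying strictly above $F_n$) and a growing list of $(r^j, s^j)$-sequences. At stage $n$, for each $\sigma \in F_n$ I apply the Main Lemma to $(T_n)_\sigma$ and to the name $(\dot I^{m_n}_k, \dot B^{m_n}_k, \dot a^{m_n}_{k,\cdot})$, where $k = k(n,\sigma)$ is chosen large enough that $(T_n)_\sigma$ forces the sums over $\dot B^{m_n}_k$ and over $\dot I^{m_n}_k \sem \dot B^{m_n}_k$ to lie within $\epsilon_n$ of ground-model reals $r_{n,\sigma}, s_{n,\sigma}$, and no stem-preserving subtree of $(T_n)_\sigma$ decides $\min \dot I^{m_n}_k$. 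Both conditions can be arranged (with some care, using that $\min \dot I^{m_n}_k \to \infty$ is forced and that the partial sums are forced to converge to $\dot r^{m_n}, \dot s^{m_n}$), possibly after a preliminary $\leq_0$-strengthening of $(T_n)_\sigma$. The Main Lemma then returns a $\leq_0$-refinement of $(T_n)_\sigma$, a new front $F_{n,\sigma}$ inside it, and finitely many ground-model $(r_\tau, s_\tau)$-sequences for $\tau$ between $\stem((T_n)_\sigma)$ and $F_{n,\sigma}$. Pasting these yields $T_{n+1} \leq_n T_n$ with $F_{n+1} = \bigcup_{\sigma \in F_n} F_{n,\sigma}$; I append the new $(r_\tau, s_\tau)$-sequences to the running list. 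Set $S = \bigcap_n T_n$, the Laver fusion limit, and let $(\bar I^j, \bar B^j, \bar a^j)_{j \in \omega}$ enumerate the list.

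For the verification, given $D \in \omoms$ almost splitting every $(\bar I^j, \bar B^j, \bar a^j)$, I run a parallel fusion to produce $S' \leq_0 S$. At stage $n$, for each $\sigma \in F_n \cap S'$ already fixed, the Main Lemma's conclusion provides a stem-preserving refinement of $(T_n)_\sigma$ agreeing with $S$ below $F_{n,\sigma}$ and forcing $|\sum_{k \in \dot B^{m_n}_{k(n,\sigma)} \cap D} \dot a^{m_n}_{k(n,\sigma),k} - r_{n,\sigma}| < 2 \epsilon_n$ together with the matching bound on the complementary sum. The ``same tree below the front'' clause $S'_\sigma = S_\sigma$ is precisely what allows these stage-$n$ refinements to combine into a fusion sequence with Laver limit $S' \leq_0 S$. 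Under $S'$, for each $m$ the index set $E_m = \{k(n,\sigma) : m_n = m,\ \sigma \in F_n \cap S'\}$ is infinite, and the approximations (with $\epsilon_n \to 0$ and $|r_{n,\sigma} - \dot r^{m_n}| < \epsilon_n$) force the $\dot B^m_k \cap D$-sums along $k \in E_m$ to converge to $\dot r^m$ and the complementary sums to $0$; hence $D$ almost splits each $(\dot{\bar I}^m, \dot{\bar B}^m, \dot{\bar a}^m)$.

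The main technical obstacle is the bookkeeping of this double fusion: choosing $k(n,\sigma)$ so that both the Main Lemma's non-decision hypothesis and the ground-model approximations of $\dot r^{m_n}, \dot s^{m_n}$ hold, and aligning the Main Lemma's fronts $F_{n,\sigma}$ with the fusion levels of the outer construction so that the compatibility clause in the Main Lemma's conclusion translates into a valid next step of the Laver fusion.
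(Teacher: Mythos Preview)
Your proposal is correct and follows essentially the same route as the paper: a front-based fusion in $\LL$ in which the Main Lemma is applied at each node of the current front to the name $(\dot I^{m_n}_k,\dot B^{m_n}_k,\dots)$ for a suitably large index $k$, followed by a second fusion that feeds the ``same tree below the front'' clause of the Main Lemma back in once $D$ is given. The paper organizes the fusion directly via the fronts $F_\ell$ (requiring only $T_{\ell+1}\leq_0 T_\ell$ together with preservation of each earlier front) rather than via the standard $\leq_n$ ordering you invoke; this is exactly the alignment issue you flag at the end, and the paper's formulation is the clean way to resolve it.
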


\begin{proof}
By pruning the names for the sequences, if necessary, we may assume that for each $m \in \omega$,
the function $n \mapsto \min (\dot I^m_n)$ dominates the Laver generic. This implies in particular that
for each $\sigma \in T$ below the stem and each $T' \leq_0 T_\sigma$, $T'$ decides $\min (\dot I^m_n)$ for
only finitely many $n$. For the same reason, we may assume that for every $m$, 
\[ T \forces |\dot r^m - \dot b_n^m | < {1 \over 2^n} \mbox{ and }  |\dot s^m - \dot c_n^m | < {1 \over 2^n}. \]
We will build the tree $S$ as the fusion of a sequence $(T_\ell : \ell \in \omega)$ by recursively
specifying fronts $F_\ell$ for $\ell \in \omega$. The $F_\ell$ and $T_\ell$ will satisfy
\begin{itemize}
\item $T_0 = T$, $F_0 = \{ \stem (T) \}$, 
\item $T_{\ell + 1} \leq_0 T_\ell$ and $F_\ell$ is a front in $T_{\ell '}$ for all $\ell ' \geq \ell$,
\item for every $\sigma \in F_{\ell + 1}$ there is a unique $\tau \in F_\ell$ such that $\tau \subsetneq \sigma$.
\end{itemize}
In the end we shall put $S = \{ \sigma \in T: \exists \ell \; \exists \tau \in F_\ell \; (\sigma \sub \tau) \} = \bigcap_\ell T_\ell$, the tree
{\em generated by} the fronts $F_\ell$. Once $F_\ell$ has been defined (at stage $\ell$) it will not be changed 
anymore and all the later pruning of the original $T$ will occur below the nodes of $F_\ell$.

Let $e : \omega \times \omega \to \omega $ be a bijection. At stage $\ell$ of the construction, that is,
when $F_\ell$ and $T_\ell$ are given, we will basically apply the main lemma to the names $\dot I^m_n$, $\dot B^m_n$,
$\dot C^m_n$, $(\dot a_k^m : k \in \dot I^m_n)$, $\dot b^m_n = \sum_{k \in \dot B^m_n} \dot a_k^m$, and
$\dot  c^m_n = \sum_{k \in \dot C^m_n} \dot a_k^m$ where $\ell = e (m,n)$ and obtain $F_{\ell + 1}$. 
More explicitly, do the following. Fix $\sigma \in F_\ell$. We may assume no subtree of $(T_\ell)_\sigma$ 
with the same stem decides the value of $\min (\dot I^m_n)$; otherwise replace $\dot I^m_n$ by some
$\dot I^m_{n'}$ for an appropriate $n' > n$, see the discussion at the beginning of the preceding paragraph.
Furthermore we may assume that for some real numbers $r^m_\sigma$ and $s^m_\sigma$,
\[ (T_\ell)_\sigma \forces | \dot r^m -  r^m_\sigma | < {1 \over 2^n} \mbox{ and } | \dot s^m - s^m_\sigma | < {1 \over 2^n}. \]
The point is that by pruning $T$ below $\sigma$ appropriately, we can find a front $F$ below $\sigma$
such that all $\tau \in F$ have this property with $\sigma$ replaced by $\tau$. We can then remove $\sigma$ from $F_\ell$
and replace it by $F$. So assume without loss of generality that $\sigma$ already has this property.
As a consequence we obtain
\[ (T_\ell)_\sigma \forces | \dot b_n^m -  r^m_\sigma | < {1 \over 2^{n-1}} \mbox{ and } | \dot c_n^m - s^m_\sigma | < {1 \over 2^{n-1}}. \]

Thus we may apply the main lemma with $\epsilon  = {1 \over 2^{n-1}}$ and obtain a tree $T^\sigma \leq_0
(T_\ell)_\sigma$ with a front $F^\sigma$ and sequences $\bar I_\tau = (I_{\tau,n} : n \in \succ_{T^\sigma} (\tau))$,
$\bar B_\tau = (B_{\tau,n} : n \in \succ_{T^\sigma} (\tau))$, and $\bar a_\tau = (a_{\tau,k} : k \in \omega)$, and
reals $r_\tau$ and $s_\tau$ for all $\tau \in T^\sigma$ with $\sigma \sub \tau$ and $\tau \subsetneq \rho$ for some
$\rho \in F^\sigma$ satisfying the conclusion of the main lemma. Now unfix $\sigma \in F_\ell$ and let $T_{\ell + 1} 
= \bigcup \{ T^\sigma : \sigma \in F_\ell \}$ and $F_{\ell + 1} = \bigcup \{ F^\sigma : \sigma \in F_\ell \}$. 
Clearly $T_{\ell + 1}$ and $F_{\ell + 1}$ satisfy the requirements. This completes the recursive step of the construction
and, as mentioned, we put $S = \{ \sigma \in T: \exists \ell \; \exists \tau \in F_\ell \; (\sigma \sub \tau) \} = \bigcap_\ell T_\ell$.
Clearly $S \leq_0 T$. Also let $((\bar I^j, \bar B^j, \bar a^j) : j \in \omega )$ list all sequences 
$(\bar I_\tau , \bar B_\tau , \bar a_\tau)$, $\tau \in S$, $\stem (S) \sub \tau$, produced along the way. 

Now assume that $D \in \omoms$ almost splits all $(\bar I^j, \bar B^j, \bar a^j)$. By recursion on $\ell$ we produce
$S_\ell$, $\ell \in \omega$, such that $S'$ will be the fusion of the $S_\ell$ and the following hold:
\begin{itemize}
\item $S_0 = S$, 
\item $S_{\ell +1} \leq_0 S_\ell$ and $F_\ell \cap S_{\ell + 1} = F_\ell \cap S_\ell$,
\item for all $\sigma \in F_\ell$, $(S_\ell)_\sigma = S_\sigma$,
\item $S_{\ell +1} \forces | \sum_{k \in \dot B^m_n \cap D} \dot a^m_k - \dot r^m | < {1 \over 2^{n-3}}$ and 
   $\sum_{k \in \dot C^m_n \cap D} \dot a^m_k  < {1 \over 2^{n-3}}$ where $\ell = e (m,n)$.
\end{itemize}
Suppose $S_\ell$ has been produced and we construct $S_{\ell + 1}$. Fix $\sigma \in F_\ell \cap S_\ell$. 
By the main lemma, we know that there is $S^\sigma \leq_0 (S_\ell)_\sigma$ such that $S^\sigma_\tau
= (S_\ell)_\tau = S_\tau$ for all $\tau \in F_{\ell + 1} \cap S^\sigma$ and 
\[ S^\sigma \forces \left| \sum_{k \in \dot B^m_n \cap D} \dot a^m_k - r^m_\sigma \right| < {1 \over 2^{n-2}} \mbox{ and }
   \sum_{k \in \dot C^m_n \cap D} \dot a^m_k  < {1 \over 2^{n-2}}. \]
Unfix $\sigma \in F_\ell \cap S_\ell$ and let $S_{\ell + 1}  = \bigcup \{ S^\sigma : \sigma \in F_\ell \cap S_\ell\}$.
It is now easy to see that $S_{\ell + 1}$ forces the required statements. 

Let $S' = \bigcap_\ell S_\ell$. We now see
\[ S' \forces \lim_n \left( \sum_{k \in \dot B^m_n \cap D} \dot a^m_k \right) = \dot r^m \mbox{ and } 
   \lim_n \left( \sum_{k \in \dot C^m_n \cap D} \dot a^m_k \right) = 0 \]
for all $m \in \omega$. This completes the proof of the lemma.
\end{proof}

By this lemma we know that adding one Laver real preserves any almost splitting family of the ground model.

We now need to deal with the iteration. This is clearly a case for $G_\delta$ preservation. It would be natural
to apply the second preservation theorem in the Bartoszy\'nski-Judah book~\cite[Theorem 6.1.18]{bj}, but it
is not clear whether condition 2 of~\cite[Definition 6.1.17]{bj} is satisfied. We therefore use the more natural
version of $G_\delta$ preservation due to Eisworth~\cite{Ei94}. 

Let $\PP$ be a proper forcing, let $\lambda$ be a sufficiently large cardinal, and assume $N \prec H(\lambda)$ is countable 
with $\PP \in N$. Let $\B$ be a Borel set with $N \cap \omom \sub \B$. Say $\PP$ {\em preserves} $(N,\B)$ if for each
$p \in N \cap \PP$ there is an $(N,\PP)$-generic condition $q \leq p$ such that $q \forces N[\dot G] \cap \omom
\sub \B$~\cite[Definition 3.2.1]{Ei94}. 

Let $\P_0$ be the collection of all quintuples $(\bar I,\bar B, \bar a, r,s)$. Identifying real numbers with their
binary expansions, we may construe $\P_0$ as a closed subset of $\omom$ and thus as a Polish space itself. In fact,
under this identification, $\P_0$ is homeomorphic to $\omom$. Next, let $\P \sub \P_0$ be the collection of
all $(\bar I, \bar B, \bar a, r,s)$ such that $|b_n - r| < {1 \over n}$ and $|c_n - s | < {1 \over n}$ for all $n$.
If $(\bar I, \bar B, \bar a, r,s) \in \P$, then $(\bar I, \bar B , \bar a )$ is an $(r,s)$-sequence. Conversely,
if $(\bar I, \bar B , \bar a )$ is an $(r,s)$-sequence, then for some subsequence $(\bar I ', \bar B ' , \bar a )$,
$(\bar I ', \bar B ', \bar a, r,s)$ belongs to $\P$. The reason for considering $\P$ is that it is closed in $\P_0$ and thus
again Polish and also homeomorphic to $\omom$. On the other hand, as remarked earlier, for the phenomenon
of almost splitting, it suffices to consider appropriate subsequences.

Let $D \in \omoms$. Let $\B_D$ be the collection of all $(\bar I, \bar B, \bar a, r,s) \in \P$ such that $D$ splits
the $(r,s)$-sequence $(\bar I, \bar B, \bar a)$. Letting $\B_D^n$ be the set of $(\bar I, \bar B, \bar a, r,s) \in \P$ such that
for some $n' \geq n$ we have $|\sum_{k \in B_{n'} \cap D} a_k - r | < {1 \over n}$ and $\sum_{k \in C_{n'} \cap D} 
a_k < {1 \over n}$, we see that each $\B_D^n$ is open and $\B_D = \bigcap_n \B_D^n$. A fortiori, $\B_D$ is a 
$G_\delta$ set. 

\begin{la}   \label{models-Laver}
Let $N \prec H(\lambda)$ be countable with $\LL \in N$. Also assume $D \in \omom$ is such that $N \cap \P \sub \B_D$.
Then $\LL$ preserves $(N,\B_D)$.
\end{la}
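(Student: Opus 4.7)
The plan is to interleave the fusion from the preceding lemma with the standard $(N,\LL)$-genericity fusion for Laver forcing, producing a single condition $q \leq p$ that is both $(N,\LL)$-generic and forces $D$ to almost split every $(r,s)$-sequence named in $N$. Fix $p \in N \cap \LL$. Since $N$ is countable, I enumerate as $(\dot X_m : m \in \omega)$ all $\LL$-names in $N$ for members of $\P$, with $\dot X_m = (\dot{\bar I}^m, \dot{\bar B}^m, \dot{\bar a}^m, \dot r^m, \dot s^m)$, and enumerate as $(A_m : m \in \omega)$ all maximal antichains of $\LL$ in $N$. Fix a bijection $e \colon \omega \times \omega \to \omega$.

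Next I will build a fusion $p = T_0 \geq_0 T_1 \geq_0 T_2 \geq_0 \cdots$ together with fronts $F_\ell \sub T_\ell$, starting from $F_0 = \{\stem(p)\}$, so that each $F_{\ell+1}$ properly refines $F_\ell$. At stage $\ell$, for each $\sigma \in F_\ell$, I perform two $\leq_0$-prunings of $(T_\ell)_\sigma$: first I apply the main lemma to the name $\dot X_m$ (where $\ell = e(m,n)$) to obtain a subtree $T^\sigma$, a front $F^\sigma$, and $(r,s)$-sequences $(\bar I_\tau, \bar B_\tau, \bar a_\tau)$ attached to nodes $\tau$ between $\sigma$ and $F^\sigma$; then I shrink below each $\tau \in F^\sigma$ so that every node on the resulting front lies below some condition of $A_\ell \cap N$. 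I let $F_{\ell+1}$ be the union of these new fronts and $T_{\ell+1}$ the tree they generate. Because $T_\ell$, $\dot X_m$ and $A_\ell$ all belong to $N$, and both prunings can be carried out inside $N$ by elementarity (the main lemma's construction is definable from its input once convergent subsequences are chosen, and those choices can be made in $N$), each sequence $(\bar I_\tau, \bar B_\tau, \bar a_\tau)$ produced along the way will lie in $N \cap \P$.

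Let $S = \bigcap_\ell T_\ell$. The standard Laver-properness argument shows $S \leq_0 p$ is $(N,\LL)$-generic, and the construction supplies a countable family $(\bar I^j, \bar B^j, \bar a^j)_{j \in \omega} \sub N \cap \P$ to which the conclusion of the preceding lemma applies. By hypothesis $N \cap \P \sub \B_D$, so $D$ almost splits every $(\bar I^j, \bar B^j, \bar a^j)$; the preceding lemma therefore yields $q \leq_0 S$ such that $q \forces$ ``$D$ almost splits $\dot X_m$'' for every $m \in \omega$. Since $q \leq S$, it inherits $(N,\LL)$-genericity, and since every member of $N[\dot G] \cap \P$ is the evaluation of some $\dot X_m \in N$, one has $q \forces N[\dot G] \cap \P \sub \B_D$, which is precisely the statement that $\LL$ preserves $(N,\B_D)$.

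The hard part will be the bookkeeping: verifying that the added genericity pruning is compatible with the already delicate main lemma fusion, and that every choice in the construction can be made inside $N$. This should be routine, since both prunings are of $\leq_0$ type and the genericity step shrinks only strictly below the relevant front $F^\sigma$, leaving the already-fixed $(r,s)$-sequences attached to nodes at or above $F^\sigma$ untouched; but it will require careful tracking of fronts and ranks through the elaborate recursion of the preceding lemma.
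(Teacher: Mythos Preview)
Your proposal is correct and follows essentially the same approach as the paper's proof: interleave the fusion from the preceding lemma with the standard $(N,\LL)$-genericity fusion, observe by elementarity that the ground-model $(r,s)$-sequences produced at each stage lie in $N$, and then use the hypothesis $N \cap \P \sub \B_D$ to conclude that $D$ almost splits them, so that the second half of the preceding lemma yields the required condition. The paper's write-up is terser---it simply remarks that finite initial segments of the construction lie in $N$ and that one may interleave with the usual genericity argument---but the content is the same; your additional remarks about why the genericity pruning (which occurs strictly below the fronts $F^\sigma$) does not disturb the already-fixed sequences are a helpful clarification the paper leaves implicit.
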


\begin{proof}
This is what the previous lemma gives us in this new context. More explicitly let $T \in N \cap \LL$. Also let
$(\dot{\bar I}^m , \dot{\bar B}^m, \dot{\bar a}^m, \dot r^m, \dot s^m)$ enumerate the $\LL$-names of members of $\P$ 
belonging to $N$. In a fusion argument we constructed $S \leq_0 T$ and $(\bar I^j, \bar B^j , \bar a^j, r^j , s^j) \in \P$
in the previous proof. While the whole construction takes place outside $N$, any finite initial segment is in $N$.
In particular all the $(\bar I^j, \bar B^j , \bar a^j, r^j , s^j)$ belong to $N$. Furthermore, by interleaving this construction
with the usual construction guaranteeing genericity, we may assume that $S$ is actually $(N,\LL)$-generic.
Since $D$ almost splits $N \cap \P$ it almost splits in particular all $(\bar I^j, \bar B^j , \bar a^j)$. 
Therefore, if $S' \leq_0 S$ is as in the conclusion of the lemma, it forces that $D$ almost splits all 
$(\dot{\bar I}^m , \dot{\bar B}^m, \dot{\bar a}^m)$. That is, $S' \forces N[\dot G] \cap \P \sub \B_D$,
as required.
\end{proof}

We now apply~\cite[Corollary 3.2.4]{Ei94}.

\begin{thm}[Eisworth]   \label{models-limit}
Suppose $\PP = ( \PP_\alpha, \dot \QQ_\alpha : \alpha < \kappa )$ is a countable support iteration of proper 
forcings. Let $\lambda$ be sufficiently large and let $N \prec H(\lambda)$ be countable with $\PP \in N$.
Let $\B$ be a $G_\delta$ set such that $N \cap \omom \sub \B$, and assume that for each $\alpha < \kappa$,
$\forces_\alpha`` \dot \QQ_\alpha$ preserves $(N [\dot G_\alpha], \B )"$. Then $\PP_\kappa$ preserves
$(N, \B)$.
\end{thm}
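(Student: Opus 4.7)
The plan is to prove the preservation statement by induction on the length $\kappa$ of the iteration, splitting into successor stages, limits of uncountable cofinality, and the delicate case of limits of countable cofinality. The inductive hypothesis, applied to each $\alpha < \kappa$, asserts that $\PP_\alpha$ preserves $(N, \B)$ in the sense recalled just before Lemma~\ref{models-Laver}. In each case, given $p \in N \cap \PP_\kappa$, one must produce an $(N, \PP_\kappa)$-generic condition $q \le p$ forcing $N[\dot G_\kappa] \cap \omom \sub \B$.

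For a successor $\kappa = \alpha + 1$, the inductive hypothesis supplies an $(N, \PP_\alpha)$-generic $q \le p \re \alpha$ forcing $N[\dot G_\alpha] \cap \omom \sub \B$. Working in the $\PP_\alpha$-extension, the hypothesis on $\dot \QQ_\alpha$, applied to the countable model $N[\dot G_\alpha]$ and to $\B$ (which remains $G_\delta$), yields a $\dot \QQ_\alpha$-condition $\dot r \le p(\alpha)$ that is $(N[\dot G_\alpha], \dot \QQ_\alpha)$-generic and forces preservation one step up; then $q \ha \dot r$ is the required extension. For a limit $\kappa$ of uncountable cofinality, every real in $V^{\PP_\kappa}$ already lies in some $V^{\PP_\alpha}$ with $\alpha < \kappa$ in $N$, so the conclusion reduces to the inductive hypothesis at such $\alpha$, combined with standard properness of countable support iterations at such limits.

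The main case, and the main obstacle, is a limit $\kappa$ of cofinality $\omega$. Fix a cofinal sequence $\alpha_n \uparrow \kappa$ in $N$, write $\B = \bigcap_n \B_n$ with each $\B_n$ open, and enumerate in $N$ both the $\PP_\kappa$-names $(\dot x_n)$ for reals and the dense open subsets $(D_n)$ of $\PP_\kappa$. I would recursively build conditions $p_n \in N \cap \PP_{\alpha_n}$ with $p_0 = p$ so that: $p_{n+1} \re \alpha_n \le p_n$; each $p_n$ is $(N, \PP_{\alpha_n})$-generic, obtained by composing the inductive hypothesis at $\alpha_n$ with the successor argument; $p_{n+1}$ forces some finite initial segment of $\dot x_m$ witnessing membership in a basic open subset of $\B_m$ for every $m \le n$; and $p_{n+1}$ decides enough to meet $D_n$ inside $N$. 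Openness of the $\B_n$ is precisely what makes the third requirement enforceable with only finitely much information at each stage, and this is exactly where the $G_\delta$ hypothesis enters. The natural amalgamation $q$ defined by $q \re \alpha_n = p_n$ has countable support, so by the countable-support structure of proper iterations it is a legitimate condition in $\PP_\kappa$; by construction it is $(N, \PP_\kappa)$-generic, and it forces $\dot x_n \in \bigcap_m \B_m = \B$ for every $n$, hence $q \forces N[\dot G_\kappa] \cap \omom \sub \B$.

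The delicate point is verifying, at the countable-cofinality limit, that the fusion-style sequence $(p_n)$ really amalgamates to a legitimate $\PP_\kappa$-condition that is $(N, \PP_\kappa)$-generic; this rests on the standard countable-support fusion machinery for proper iterations, interleaved with the bookkeeping that progressively places the names $\dot x_n$ into open approximations of $\B$. All the genuinely new content of the theorem is concentrated at this step: once the fusion framework is in place, the $G_\delta$ preservation propagates essentially by a pigeonhole over the enumeration of names in $N$.
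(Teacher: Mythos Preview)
The paper does not prove this theorem at all: it is quoted verbatim as Eisworth's result, with a citation to \cite[Corollary 3.2.4]{Ei94}, and no argument is given. So there is no ``paper's own proof'' to compare against; your sketch is an attempt to reconstruct a proof of a black-boxed theorem.

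That said, your outline has the right architecture (induction on $\kappa$, with the real work at limits of countable cofinality via a fusion interleaving genericity with progressive placement of names into the open sets $\B_n$), but the countable-cofinality step as written contains a genuine error. You ask that the $p_n$ lie in $N \cap \PP_{\alpha_n}$ \emph{and} that each $p_n$ be $(N,\PP_{\alpha_n})$-generic. These requirements are incompatible for nontrivial iterands: an $(N,\PP_{\alpha_n})$-generic condition is built outside $N$ and will not generally belong to $N$. The correct fusion builds conditions $q_n \in \PP_\kappa$ (not in $N$) with $q_{n+1} \re \alpha_n = q_n \re \alpha_n$ (equality, not merely $\leq$), where $q_n \re \alpha_n$ is $(N,\PP_{\alpha_n})$-generic, while names in $N$ are used as bookkeeping targets to be decided into basic open subsets of the $\B_m$. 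Your amalgamation ``$q \re \alpha_n = p_n$'' only makes sense once this equality of restrictions is enforced; with $p_{n+1} \re \alpha_n \leq p_n$ alone there is no coherent limit condition. Finally, your uncountable-cofinality case is too quick: what makes it routine is that $\sup(N \cap \kappa) < \kappa$ since $N$ is countable, so one really reduces to the inductive hypothesis at $\alpha = \sup(N\cap\kappa)$ together with the standard tail argument; simply saying ``reals appear earlier'' does not by itself produce an $(N,\PP_\kappa)$-generic condition.
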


\begin{thm}
$\salmost = \aleph_1$ in the Laver model. In particular, $\salmost < \ger b$ is consistent.
\end{thm}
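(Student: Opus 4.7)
The plan is to take an almost splitting family $\mathcal D \in V$ of size $\aleph_1$ in a ground model $V$ where GCH holds (such $\mathcal D$ exists since $\salmost \leq \stotal \leq \non M \leq \ger c = \aleph_1$ in $V$) and show it remains an almost splitting family in the Laver extension $V[G_{\omega_2}]$. Combined with the uncountability of $\salmost$ (immediate from $\aleph_1 \leq \ss \leq \sso \leq \salmost$ established earlier), this yields $\salmost = \aleph_1$ in the Laver model; the consistency of $\salmost < \ger b$ then follows because $\ger b = \aleph_2$ there.

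To show $\mathcal D$ stays almost splitting, I would suppose toward a contradiction that some $p \in \PP_{\omega_2}$ forces a countable family of names $(\dot{\bar I}^m, \dot{\bar B}^m, \dot{\bar a}^m)_{m \in \omega}$ for $(\dot r^m, \dot s^m)$-sequences that no $D \in \mathcal D$ almost splits. Fix a countable $N \prec H(\lambda)$ with $\PP_{\omega_2}, \mathcal D, p$, and all these names in $N$. The set $N \cap \P$ lies in $V$ and is countable, so the almost splitting property of $\mathcal D$ in $V$ produces some $D \in \mathcal D$ almost splitting every element of $N \cap \P$; equivalently, $N \cap \P \sub \B_D$. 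Now apply Eisworth's preservation theorem (Theorem~\ref{models-limit}) with the $G_\delta$ set $\B = \B_D$: its stage-$\alpha$ hypothesis, that $\dot{\LL}$ preserves $(N[\dot G_\alpha], \B_D)$, is exactly what Lemma~\ref{models-Laver} gives inside $V[G_\alpha]$, with the inductive condition $N[G_\alpha] \cap \P \sub \B_D$ propagated by the preservation along the iteration. The conclusion yields an $(N, \PP_{\omega_2})$-generic $q \leq p$ with $q \forces N[\dot G] \cap \omom \sub \B_D$; since appropriate subsequences of the interpretations of the $(\dot{\bar I}^m, \dot{\bar B}^m, \dot{\bar a}^m)$ are genuine members of $\P$ lying in $N[\dot G]$, they are forced into $\B_D$, so $q$ forces $D$ to almost split each of them, contradicting our choice of $p$.

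The main obstacle has already been dispatched in the Main Lemma and Lemma~\ref{models-Laver}, which handle the combinatorial heart of preservation by a single Laver step. What remains is bookkeeping: assembling these through Eisworth's iteration preservation theorem, and translating between the combinatorial definition of almost splitting and its Polish-space encoding via the $G_\delta$ set $\B_D$ (in particular, the technical maneuver of passing to subsequences to land inside $\P$).
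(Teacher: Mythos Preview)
Your proposal is correct and follows essentially the same route as the paper: choose a countable $N \prec H(\lambda)$ containing the relevant names, pick $D$ with $N \cap \P \sub \B_D$, and invoke Lemma~\ref{models-Laver} at each iterand together with Eisworth's preservation theorem (Theorem~\ref{models-limit}) to push $N[\dot G] \cap \P \sub \B_D$ through the iteration. The only cosmetic difference is that you fix a ground-model almost splitting family $\mathcal D$ of size $\aleph_1$ in advance and argue by contradiction, whereas the paper simply takes any $D \in V$ with $N \cap \P \sub \B_D$ and proceeds directly (implicitly using $[\omega]^\omega \cap V$, of size $\aleph_1$ by CH, as the witnessing family).
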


\begin{proof}
Let $(\dot{\bar I}, \dot{\bar B}, \dot{\bar a}, \dot r, \dot s)$ be an $\LL_{\omega_2}$-name for a 
member of $\P$. Also let $p \in \LL_{\omega_2}$.
Let $N \prec H(\lambda)$ be countable with $\PP, p, (\dot{\bar I}, \dot{\bar B}, \dot{\bar a}, \dot r, \dot s) \in N$.
Let $D \in \omoms$ be such that $N \cap \P \sub \B_D$. By induction, using Lemma~\ref{models-Laver}
for the iterands and Theorem~\ref{models-limit} for the iteration we see that all $\PP_\alpha$
preserve $(N,\B_D)$. Hence we may find an $(N, \LL_{\omega_2})$-generic condition $q \leq p$ such that
$q \forces N[\dot G_{\omega_2}] \cap \P \sub \B_D$. In particular $q$ forces that $D$ almost splits
$(\dot{\bar I}, \dot{\bar B}, \dot{\bar a})$, as required.
\end{proof}

\section{The subrearrangement number: a characterization of $\min \{ \ss,\rr \}$}

In this section we will consider another cardinal invariant related to the subseries numbers and the rearrangement numbers, which we call the \emph{subrearrangement} number. One may think of this number as combining the idea behind the subseries numbers and the rearrangement numbers: first one chooses a subseries, and then one permutes the terms of this subseries, in order to test whether a series converges conditionally. This definition was suggested by Rahman Mohammadpour on MathOverflow \cite{Rahman}.

\begin{df}      \label{df-sr}
 $\sr$ is the smallest cardinality of any family $\mathcal F$ of
      injective functions $\N \to \N$ such that, for every conditionally
      convergent series $\sum_{n \in \N}a_n$ of real numbers, there is some
      $f \in \mathcal F$ such that the series $\sum_{n \in \N}a_{f(n)}$ diverges.
\end{df}

If $f: \N \to \N$ is an injective function, then the series $\sum_{n \in \N}a_{f(n)}$ is a rearrangement of the subseries $\sum_{n \in f[\N]}a_n$. Conversely, if $A$ is an infinite subset of $\N$ and $p: A \rightarrow A$ is a permutation of $A$, then there is an injective function $f: \N \to \N$ such that
$$\sum_{n \in \N}a_{f(n)} = \sum_{n \in A}a_{p(n)},$$
namely $f(n) = p(\text{the }n^{\mathrm{th}}\text{ element of }A)$. This is why injective functions are used in the definition of $\sr$: they are merely a convenient way of modeling the idea of first taking a subseries and then rearranging it. Alternatively, one may consider injective functions as modeling the process of first taking a rearrangement, and then taking a subseries of that rearrangement. The order in which one does these things is irrelevant.

Of course, one could also consider different types of subrearrangement numbers, defining $\sri$ and $\sro$ in analogy with $\ssi$ and $\sso$ according to the manner in which the series defined by our injective functions diverges. We will not deal with these variants here, but will confine our attention to $\sr$ only.

The main theorem of this section, the last theorem of this paper, states that $\sr = \min \{\ss,\rr\}$. Thus $\sr$ does not really constitute a new cardinal characteristic, but merely an interesting alternative description for $\min \{\ss,\rr\}$. Let us begin with the easy direction:

\begin{thm}\label{thm:sr1}
$\sr \leq \rr$ and $\sr \leq \ss$.
\end{thm}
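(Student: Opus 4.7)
The plan is to handle the two inequalities separately, as each follows from a different idea; both are straightforward enough to be regarded as the ``easy direction'' of the characterization $\sr = \min\{\ss,\rr\}$ promised by the main theorem of the section.

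For $\sr \leq \ss$, my plan is to convert each subset in a witness family for $\ss$ into its canonical enumeration function. Let $\mathcal A$ witness $\ss$. Without loss of generality every $A \in \mathcal A$ is infinite, because a finite subset of $\N$ produces only a (trivially) convergent subseries and can therefore be discarded from $\mathcal A$ without damaging the witness property. For each $A \in \mathcal A$, let $e_A : \N \to \N$ denote the strictly increasing enumeration of $A$; this is an injection whose range is $A$, and $\sum_n a_{e_A(n)} = \sum_{n \in A} a_n$ term by term. The family $\mathcal F = \{e_A : A \in \mathcal A\}$ therefore witnesses $\sr$ and has cardinality at most $|\mathcal A|$, yielding $\sr \leq \ss$.

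For $\sr \leq \rr$, my plan is to invoke the identity $\rr = \rro$ proved in \cite{rearrangement} and already recalled in Section~\ref{sec:defs}. Recall that $\rro$ is defined like $\rr$ but requires the rearranged series to diverge by oscillation. Since oscillatory divergence is in particular a form of divergence, any witness family $\mathcal C$ for $\rro$---viewed merely as a family of injections $\N \to \N$---is automatically a witness family for $\sr$. Taking $\mathcal C$ of cardinality $\rro = \rr$ then gives $\sr \leq \rr$.

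The one point I would flag as the main obstacle is that one \emph{cannot} appeal directly to the definition of $\rr$: that definition only requires the rearranged series $\sum_n a_{p(n)}$ to fail to converge to the same limit, which permits convergence to some other finite value $L' \neq L$. In that case the permutation $p$, viewed as an injection, does not witness divergence in the $\sr$-sense, and a naive attempt to iterate by composing with further members of the $\rr$-witness could, a priori, produce an unending sequence of rearrangements each convergent to a different finite value, without ever yielding genuine divergence. The equality $\rr = \rro$ from \cite{rearrangement} sidesteps precisely this obstacle, and making use of it is essentially the only content of the argument for the second inequality.
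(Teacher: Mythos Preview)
Your argument for $\sr \leq \ss$ is identical to the paper's.

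For $\sr \leq \rr$ the paper argues more directly: it takes a family $\mathcal C$ witnessing $\rr$, observes that permutations are injections, and concludes that $\mathcal C$ already witnesses $\sr$. In doing so the paper paraphrases the defining property of $\rr$ as ``every conditionally convergent series has a divergent rearrangement,'' which is not literally what Definition~\ref{df-rr} says---that definition permits the rearranged series to converge to a different finite value, exactly the obstacle you flag. Your detour through $\rr = \rro$ from \cite{rearrangement} closes this gap explicitly: a witness for $\rro$ genuinely produces divergent (indeed oscillating) rearrangements, so viewed as injections it witnesses $\sr$. The paper's one-line proof is best read as implicitly invoking that same equality via its ``i.e.''\ clause; your version makes the needed input visible. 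Either way the content is the same, and your proof is correct.
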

\begin{pf}
Suppose $\mathcal C$ satisfies the definition of $\rr$; i.e., $\mathcal C$ is a family of permutations of $\N$ witnessing that every conditionally convergent series has a divergent rearrangement. Because every permutation is injective, $\mathcal C$ also satisfies the definition of $\sr$. This shows $\sr \leq \rr$.

Suppose $\mathcal A$ is a family of subsets of $\N$ witnessing that every conditionally convergent series has a divergent subseries. Because finite sets are useless for this purpose, we may assume without loss of generality that every $A \in \mathcal A$ is infinite. Let $\mathcal F = \{e_A : A \in \mathcal A\}$, where $e_A$ denotes the unique increasing enumeration of $A$. Then $\mathcal F$ will satisfy the definition of the subrearrangement number, and this shows $\sr \leq \ss$.
\end{pf}

The proof that $\min\{\rr,\ss\} \leq \sr$ breaks into two cases, according to whether or not $\sr < \ger b$. Note that $\sr < \ger b$ is consistent: combining the previous theorem with the results from Section~\ref{sec:Laver}, we see that this inequality holds in the Laver model. Because the proofs of these two cases do not overlap, we will break them up into two separate theorems below.

Before tackling the first of these two cases, we will need a lemma providing an alternative characterization of $\ger b$. Let us say that a set $A \subseteq \N$ is \emph{preserved} by an injective function $f: \N \to \N$ if $f$ does not change the relative order of members of $A$ except for finitely many elements; that is, for all but finitely many $x,y \in A$, we have $x < y$ if and only if $f(x) < f(y)$. If $A$ is not preserved by $f$, we say that $A$ is \emph{jumbled} by $f$.

\begin{la}\label{lem:jumbling}
The unbounding number $\ger b$ is the smallest cardinality of a family $\mathcal F$ of injective functions $\N \to \N$ with the property that every infinite $A \subseteq \N$ is jumbled by some $f \in \mathcal F$.
\end{la}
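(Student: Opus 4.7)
Write $\ger j$ for the cardinal defined by the lemma (the least size of a family of injections jumbling every infinite $A \subseteq \N$). My plan is to prove $\ger b = \ger j$ by establishing both inequalities, using the set-theoretic characterization of $\ger b$ from Lemma~\ref{lem:b&d} for one direction, and a direct argument with the $\leq^*$-definition for the other.

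For $\ger j \leq \ger b$, I would begin with a family $\mathcal B$ witnessing the first clause of Lemma~\ref{lem:b&d}, so that no single subset of $\N$ is sparser than every member of $\mathcal B$. To each $B = \{b_0 < b_1 < b_2 < \dots\} \in \mathcal B$ I attach a bijection $f_B: \N \to \N$ that reverses the order of each block $[b_i,b_{i+1})$ (and acts somehow on the finite initial segment $[0,b_0)$). If $A$ is any infinite subset of $\N$, the choice of $\mathcal B$ gives some $B \in \mathcal B$ to which $A$ is not sparser; that is, infinitely often two distinct elements $x<y$ of $A$ lie in a common block $[b_i,b_{i+1})$, and for each such pair we have $f_B(x) > f_B(y)$. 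Hence $f_B$ jumbles $A$, and $\{f_B:B\in\mathcal B\}$ witnesses $\ger j \leq |\mathcal B|$.

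For the reverse inequality $\ger b \leq \ger j$, the key construction is, for each injection $f:\N\to\N$, the function
\[
h_f(n) \;=\; \max\{k \in \N : f(k) \leq f(n)\},
\]
which is well-defined and finite because injectivity of $f$ ensures $f^{-1}([0,f(n)])$ has at most $f(n)+1$ elements. Given a family $\mathcal F$ of injections that jumbles every infinite set, I claim $\{h_f:f\in\mathcal F\}$ is unbounded under $\leq^*$, which by the usual characterization of $\ger b$ yields $\ger b \leq |\mathcal F|$. Suppose for contradiction some single $h:\N\to\N$ satisfies $h_f \leq^* h$ for every $f\in\mathcal F$. Recursively build an infinite set $A = \{a_0<a_1<a_2<\dots\}$ with $a_{i+1} > h(a_i)$ for all $i$. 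Then for each $f \in \mathcal F$, for all sufficiently large $i$ we have $a_{i+1} > h(a_i) \geq h_f(a_i)$, so $a_{i+1}$ lies outside $f^{-1}([0,f(a_i)])$, meaning $f(a_{i+1}) > f(a_i)$. Thus $f$ eventually preserves the order of $A$, i.e., $f$ does not jumble $A$ — contradicting the assumption on $\mathcal F$.

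The routine parts are the two verifications (that $f_B$ jumbles $A$ when $A$ is not sparser than $B$, and that injectivity makes $h_f$ finite). The step I expect to require the most care is the definition of $h_f$: one must ensure it both captures the obstruction to $f$ preserving order on fast-growing sets and is well-defined without surjectivity of $f$, and here the use of injectivity (forcing $f^{-1}([0,m])$ to be finite) is essential.
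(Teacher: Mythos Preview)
Your proof is correct. Both directions go through as you describe: the block-reversing permutations $f_B$ do jumble any $A$ that is not sparser than $B$ (since infinitely many disjoint pairs get swapped, no cofinite subset of $A$ can have its order preserved), and the function $h_f(n)=\max\{k:f(k)\leq f(n)\}$ is well-defined by injectivity and yields, via transitivity of the order on the tail $\{a_i:i\geq N\}$, that $A$ is preserved by every $f\in\mathcal F$ once $h$ dominates all $h_f$.

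The paper does not actually give a proof here: it simply observes that Theorem~16 of \cite{rearrangement} proves the bijection case, and that the argument there never uses surjectivity, so the word ``bijection'' can be replaced by ``injection'' throughout. Your write-up therefore supplies more than the paper does. The techniques you use (block-reversal and the domination function $h_f$) are the standard ones and almost certainly coincide with what appears in the cited reference, so there is no substantive divergence in method---you have just made the argument self-contained rather than deferring to \cite{rearrangement}.
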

\begin{pf}
A variant of this lemma was proved as Theorem 16 in \cite{rearrangement}. The variant there dealt only with bijections $\N \to \N$ rather than injections. However, the proof given there does not use the surjectivity of these functions at any point, so substituting the word ``injection'' for every instance of the word ``bijection'' in that proof provides a proof of the present lemma.
\end{pf}

\begin{thm}\label{thm:sr2}
If $\sr < \ger b$, then $\sr = \ss$.
\end{thm}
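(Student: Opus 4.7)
By Theorem~\ref{thm:sr1} we already have $\sr \leq \ss$, so it suffices to prove $\ss \leq \sr$ under the hypothesis $\sr < \ger b$. Fix a family $\mathcal F$ of injections $\N \to \N$ of cardinality $\sr$ that witnesses $\sr$. The natural candidate for a witness of $\ss$ of the same cardinality is
\[ \mathcal A \,=\, \{ f[\N] : f \in \mathcal F \}, \]
so my aim is to verify that $\mathcal A$ actually witnesses $\ss$.

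Suppose for contradiction that some conditionally convergent series $\sum_{n \in \N} a_n$ has $\sum_{n \in f[\N]} a_n$ convergent for every $f \in \mathcal F$. Because $\mathcal F$ witnesses $\sr$, there is $f_0 \in \mathcal F$ with $\sum_n a_{f_0(n)}$ divergent. Factoring $f_0 = e_{f_0[\N]} \circ \pi_{f_0}$ with $\pi_{f_0}$ a uniquely determined permutation of $\N$, the divergent series $\sum_n a_{f_0(n)}$ is obtained by applying the rearrangement $\pi_{f_0}$ to the convergent subseries $\sum_{n \in f_0[\N]} a_n$, so this subseries is in fact conditionally convergent. In particular, each induced permutation $\pi_f$ must be held responsible for at least some of the divergence of $\sum_n a_{f(n)}$.

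To leverage $|\mathcal F| < \ger b$, I apply Lemma~\ref{lem:jumbling} to the family of induced permutations $\{ \pi_f : f \in \mathcal F \}$, which also has cardinality $< \ger b$: this produces an infinite $A_0 \subseteq \N$ that no $\pi_f$ jumbles, so each $\pi_f$ is order-preserving on $A_0$ away from a finite exceptional set. My plan is then to combine $A_0$ with the padding-with-zeros template of Section~\ref{pad1} to construct a conditionally convergent series $\sum a'_n$ whose nonzero terms are concentrated on positions engineered so that, for every $f \in \mathcal F$, the rearrangement $\sum_n a'_{f(n)}$ reduces up to finitely many terms to a natural-order subseries of a fixed conditionally convergent template $\sum c_k$ (e.g., the alternating harmonic series) and hence converges. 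This would contradict $\mathcal F$'s being a witness for $\sr$, completing the proof.

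The main obstacle I anticipate is the combinatorial bridge between the order preservation of $\pi_f$ on $A_0$, which controls the behavior of $f$ restricted to the set $A_0 \subseteq \N$, and what actually governs the rearrangement $\sum_n a'_{f(n)}$, namely $f$ restricted to the preimage $f^{-1}(\operatorname{supp} a')$. Choosing that support---plausibly a set of the form $f_0[A_0]$ or a coordinated variant tailored to all of $\mathcal F$ at once---so that every $f \in \mathcal F$ simultaneously maps its relevant preimage monotonically into the support is the delicate point where the hypothesis $\sr < \ger b$ does its essential work.
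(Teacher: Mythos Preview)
Your proposal is not a proof but a plan with an explicitly acknowledged gap, and the plan itself contains a fatal error. The core problem is your claim that $\sum_n a'_{f(n)}$ ``reduces up to finitely many terms to a natural-order subseries of a fixed conditionally convergent template $\sum c_k$ \emph{and hence converges}.'' Subseries of conditionally convergent series need not converge---that is precisely the phenomenon underlying $\ss$. So even if you succeeded in arranging that each $\sum_n a'_{f(n)}$ is, in natural order, a subseries of the alternating harmonic series, nothing would prevent some of these subseries from diverging. Worse, your construction of $\sum a'_n$ uses only $A_0$ (which depends on $\mathcal F$ alone) and never uses the hypothetical bad series $\sum a_n$; if the plan worked it would show that \emph{no} family $\mathcal F$ can witness $\sr$, which is absurd. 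The contradiction scheme is therefore incoherent, not merely incomplete.

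The paper avoids these difficulties by arguing directly rather than by contradiction, and by applying Lemma~\ref{lem:jumbling} to the injections $f\in\mathcal F$ themselves rather than to the induced permutations $\pi_f$. This yields a single infinite $B\subseteq\N$ on which every $f\in\mathcal F$ is order-preserving modulo a finite set. The witnessing family for $\ss$ is then
\[
\mathcal A \,=\, \bigl\{\, e_B^{-1}\bigl[B\cap f[\N]\bigr] : f\in\mathcal F \,\bigr\},
\]
which genuinely depends on $B$ and not merely on the ranges $f[\N]$. Given an arbitrary conditionally convergent $\sum_n a_n$, one pads it with zeros onto $B$ to obtain $\sum_n c_n$, chooses $f\in\mathcal F$ with $\sum_n c_{f(n)}$ divergent, and then uses the order preservation of $f$ on $B$ to identify $\sum_n c_{f(n)}$ (up to finitely many terms) with the subseries $\sum_{n\in A_f} a_n$. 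Note the direction of the inference: one passes from a \emph{divergent} rearranged series to a divergent subseries, never from convergence to convergence. The ``obstacle'' you anticipated---linking order preservation on $A_0$ to behaviour on the relevant preimage---is exactly what the paper sidesteps by padding the \emph{given} series onto the set furnished by the jumbling lemma and reading off the subseries from there.
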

\begin{pf}
Suppose $\sr < \ger b$. Let $\mathcal F$ be a family of injective functions ${\N \to \N}$ with the property that, for every conditionally convergent series $\sum_{n \in \N}a_n$, the series $\sum_{n \in \N}a_{f(n)}$ diverges for some $f \in \mathcal F$. Moreover, let us suppose $|\mathcal F| < \ger b$. We will find a family $\mathcal A$ of subsets of $\N$ such that $|\mathcal A| \leq |\mathcal F|$ and, for every conditionally convergent series $\sum_{n \in \N}a_n$, the subseries $\sum_{n \in A}a_n$ diverges for some $A \in \mathcal A$. This will prove $\ss \leq \sr$, and this suffices to prove the theorem because the reverse inequality is already proved.

By Lemma~\ref{lem:jumbling}, there is an infinite $B \subseteq \N$ such that, for all $f \in \mathcal F$, we have $x < y$ if and only if $f(x) < f(y)$ for all but finitely many members of $B$.

The idea of the proof is as follows. Given a conditionally convergent series $\sum_{n \in \N}a_n$, we may insert a large number of zeros, as in the proof of Theorem~\ref{aboves:thm}, to obtain a new series that is identical to the original except that its nonzero terms occur only on $B$. Some $f \in \mathcal F$ must make this new series diverge. However, by our choice of $B$, the function $f$ does not significantly rearrange the terms of the new series. If $f$ cannot make the new series diverge by rearranging its terms, then it must make the new series diverge by picking out a divergent subseries. Thus, by writing the terms of the series on $B$, we can use $f \in \mathcal F$ to find a divergent subseries.

Let $e_B$ denote the unique increasing enumeration of $B$. For every $f \in \mathcal F$, define
$$A_f = e_B^{-1}[B \cap f[\N]] = \{ n \in \N : e_B(n) \in f[\N] \}$$
and let
$$\mathcal A = \{ A_f : f \in \mathcal F \}.$$
Clearly $|\mathcal A| \leq |\mathcal F|$, and we claim that this family $\mathcal A$ is as required.

Let $\sum_{n \in \N}a_n$ be a conditionally convergent series. As in the proof of Theorem~\ref{aboves:thm}, define a new series $\sum_{n \in \N}c_n$ by setting
\begin{align*}
c_n =
\begin{cases}
a_k \ \  & \text{if } n = e_B(k) \\
0 \ \  & \text{if } n \notin B.
\end{cases}
\end{align*}

The series $\sum_{n \in \N}c_n$ has the same nonzero terms as $\sum_{n \in \N}a_n$, in the same order; the only difference is that many zeros have been inserted. In particular, $\sum_{n \in \N}c_n$ is conditionally convergent, so there is some $f \in \mathcal F$ such that the series $\sum_{n \in \N}c_{f(n)}$ diverges.

Consider the series $\sum_{n \in A_f}a_n$. By the definition of $A_f$ and of the $c_n$, we have
$$\sum_{n \in A_f}a_n = \sum_{n \in B \cap f[\N]}a_{e_B^{-1}(n)} = \sum_{n \in B \cap f[\N]}c_n.$$
Furthermore, because $c_n = 0$ whenever $n \notin B$ and because $f$ is order-preserving on $B$ with only finitely many exceptions, we have
$$\sum_{n \in B \cap f[\N]}c_n = \sum_{n \in f[\N]}c_n = \sum_{n \in \N}c_{f(n)}$$
so that $\sum_{n \in A_f}a_n$ diverges, as required.
\end{pf}

\begin{thm}\label{thm:sr3}
If $\sr \geq \ger b$, then $\sr = \rr$.
\end{thm}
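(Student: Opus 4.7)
The plan is to prove $\rr \leq \sr$ under the hypothesis $\sr \geq \ger b$, since the reverse inequality is Theorem~\ref{thm:sr1}. Fix a family $\mathcal F$ of injections witnessing $\sr$ and, using Lemma~\ref{lem:b&d}, a family $\mathcal B$ of infinite and coinfinite subsets of $\N$ of size $\ger b \leq \sr$ such that no single subset of $\N$ is sparser than every member of $\mathcal B$; by a standard closure argument we may also assume $\mathcal B$ is closed under the operation $\{b_1,b_2,\dots\} \mapsto \{b_1+1,b_2+2,\dots\}$ used in Theorem~\ref{ssvsrr:thm}(1). The goal is to assemble a family $\mathcal C$ of permutations of cardinality at most $\sr$ that witnesses $\rr$.

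For each pair $(f,B) \in \mathcal F \times \mathcal B$ I would put two permutations into $\mathcal C$. The first, call it $p^{(1)}_{f,B}$, sends the $k$-th element of $\N \setminus B$ to $f(k)$ and sends the $k$-th element of $B$ to the $k$-th element of $\N \setminus f[\N]$ (both enumerated increasingly). The second is the shuffle-based permutation $p_{f[\N],B}$ from the proof of Theorem~\ref{ssvsrr:thm}(1), applied to $A=f[\N]$ and $B$. The resulting family has cardinality at most $|\mathcal F|\cdot|\mathcal B| \leq \sr \cdot \ger b = \sr$.

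To verify $\mathcal C$ works, given a conditionally convergent series $\sum a_n$, pick $f \in \mathcal F$ with $\sum_n a_{f(n)}$ divergent and split on whether $\sum_{n \in f[\N]} a_n$ converges. In the convergent case, $\sum_{n \notin f[\N]} a_n$ also converges (as the difference of two convergent series), and the partial sums of $\sum_n a_{p^{(1)}_{f,B}(n)}$ decompose as $\sum_{k \leq K_N} a_{f(k)} + \sum_{k \leq L_N} a_{r_k}$, where $r_k$ increasingly enumerates $\N \setminus f[\N]$ and $K_N,L_N$ count $\N\setminus B$ and $B$ inside $[1,N]$. Since $B$ is coinfinite, $K_N$ traverses every natural number, so the first summand realises every partial sum of the divergent series $\sum_k a_{f(k)}$ and inherits the same $\limsup$ and $\liminf$; the second summand tends to the finite value $\sum_{n \notin f[\N]} a_n$. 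Hence the rearrangement diverges for any such $B$. In the divergent case, $f[\N]$ itself is a divergent subseries of $\sum a_n$, and applying the proof of Theorem~\ref{ssvsrr:thm}(1) with $A=f[\N]$ and our family $\mathcal B$ produces a specific $B \in \mathcal B$ for which $p_{f[\N],B}$ rearranges $\sum a_n$ to diverge.

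The main obstacle I foresee is recognising that a single natural permutation per pair $(f,B)$ will not do. The permutation $p^{(1)}_{f,B}$ is tailor-made for the case where $\sum_{n \in f[\N]} a_n$ converges, because the inserted complementary tail then makes only a bounded contribution and cannot destroy divergence; but if $\sum_{n \in f[\N]} a_n$ itself diverges, that inserted tail can in principle cancel the divergence of $\sum_n a_{f(n)}$ entirely. This second regime is exactly what Theorem~\ref{ssvsrr:thm}(1) handles by exploiting the $\ger b$-style sparseness of $\mathcal B$, so invoking it on $f[\N]$ dispatches the remaining case and completes the argument.
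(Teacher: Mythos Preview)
Your argument is essentially correct, with one small oversight: your permutations $p^{(1)}_{f,B}$ and $p_{f[\N],B}$ are only well-defined when $\N \setminus f[\N]$ is infinite, since $B$ is infinite and you need infinitely many targets for its elements. When $f$ has cofinite range neither map is a permutation of $\N$. This case is easy to dispatch (the paper handles it by listing the finitely many elements of $\N \setminus f[\N]$ first and then $f(1),f(2),\dots$), and your convergent-case analysis applies verbatim to that permutation, so the gap is purely technical. Note too that in your divergent case the coinfiniteness of $f[\N]$ is automatic, since $\sum_{n\in f[\N]}a_n$ diverges while $\sum_n a_n$ converges.

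Where the complement of the range is infinite, your route genuinely differs from the paper's. The paper uses a \emph{single} permutation scheme---precisely your $p^{(1)}_{f,B}$---and argues via the sparse-set machinery that for a well-chosen $B$ it always works, regardless of whether $\sum_{n \in f[\N]} a_n$ converges: one tracks the divergence of $\sum_n a_{f(n)}$ itself through long consecutive blocks that survive on $\N \setminus B$. You instead split on the convergence of $\sum_{n \in f[\N]} a_n$. Your convergent case is cleaner than anything in the paper's argument (no sparseness is needed; any infinite coinfinite $B$ works, because the inserted complementary tail is a convergent series and cannot spoil divergence), and your divergent case is a direct appeal to the proof of Theorem~\ref{ssvsrr:thm}(1). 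This makes the proof more modular, at the cost of carrying two permutation types per pair; it also makes explicit the underlying dichotomy between divergence coming from the subseries aspect of $f$ versus the rearrangement aspect.
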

\begin{pf}
Suppose $\sr \geq \ger b$. Let $\mathcal F$ be a family of injective functions ${\N \to \N}$ with the property that, for every conditionally convergent series $\sum_{n \in \N}a_n$, the series $\sum_{n \in \N}a_{f(n)}$ diverges for some $f \in \mathcal F$. 

We already know from Theorem~\ref{thm:sr1} that $\sr \leq \rr$, so it remains to prove the reverse inequality. The proof that $\rr \leq \sr$ is similar to that of Theorem~\ref{ssvsrr:thm}, part $(1)$, where we proved that $\rr \leq \max \{ \ss, \ger b \}$, but with some details different in this case.

Let $\mathcal B$ be a family of subsets of $\N$ with the property that no single subset of $\N$ is sparser than every $B \in \mathcal B$, and such that $|\mathcal B| \leq |\mathcal F|$. This is possible by Lemma~\ref{lem:b&d} and the assumption that $\sr \geq \ger b$. For reasons that become apparent later in the proof, let us assume that $\mathcal B$ is closed under the operation $\{b_1,b_2,b_3,\dots\} \mapsto \{b_1+1,b_2+2,b_3+3,\dots\}$, where $b_1 < b_2 < b_3 < \dots$.

We will find a family $\mathcal C$ of permutations of $\N$ such that $|\mathcal C| \leq {|\mathcal F| \cdot |\mathcal B|} \break = |\mathcal F|$ and, for every conditionally convergent series $\sum_{n \in \N}a_n$, the rearrangement $\sum_{n \in \N}a_{p(n)}$ diverges. This suffices to prove the theorem.

We will define the members of $\mathcal C$ from the members of $\mathcal F$ and $\mathcal B$, as one might expect, but the definition requires two cases. Specifically, let us partition $\mathcal F$ into two sets:
$$\mathcal F_0 = \{f \in \mathcal F : \N \setminus f[\N] \text{ is finite}\}$$
$$\mathcal F_1 = \{f \in \mathcal F : \N \setminus f[\N] \text{ is infinite}\}$$
and deal with each of these sets separately.

For each $f \in \mathcal F_0$, let $k_f = |\N \setminus f[\N]|$ and define $p_f: \N \to \N$ by
\begin{align*}
p_f(n) = 
\begin{cases}
\text{the }n^{\mathrm{th}}\text{ member of }\N \setminus f[\N] & \text{ if }n \leq k_f \\
f(n - k_f) & \text{ if }n > k_f
\end{cases}
\end{align*}
In other words, $p_f$ is the permutation of $\N$ that does exactly what $f$ does, but it takes the $k_f$ members of $\N \setminus f[\N]$ and sticks them at the beginning.

For $f \in \mathcal F_1$, observe that both $f[\N]$ and $\N \setminus f[\N]$ are infinite. For each $f \in \mathcal F_1$ and each $B \in \mathcal B$, define $p_{f,B}: \N \to \N$ by
\begin{align*}
p_{f,B}(n) = 
\begin{cases}
f(k) \\ \qquad \qquad \qquad \qquad \text{ if } n \text{ is the } k^{\mathrm{th}} \text{ member of }\N \setminus B \\
\text{the } k^{\mathrm{th}} \text{ member of } \N \setminus f[\N] \\ \qquad \qquad \qquad \qquad \text{ if } n \text{ is the } k^{\mathrm{th}} \text{ member of } B.
\end{cases}
\end{align*}
In other words, $p_{f,B}$ is the permutation that writes the image of $f$ out onto $\N \setminus B$, without changing the order of things as determined by $f$, and then stretches out the complement of $f[\N]$ onto the sparse set $B$.

Define
$$\mathcal C = \{ p_f : f \in \mathcal F_0 \} \cup \{ p_{f,B} : f \in \mathcal F_1 \text{ and } B \in \mathcal B \}.$$
It is clear that $|\mathcal C| \leq |\mathcal F|+|\mathcal F| \cdot |\mathcal B| = |\mathcal F|$, so it remains to show that, for every conditionally convergent series $\sum_{n \in \N}a_n$, the rearrangement $\sum_{n \in \N}a_{p(n)}$ diverges for some $p \in \mathcal C$.

Let $\sum_{n \in \N}a_n$ be a conditionally convergent series, and fix $f \in \mathcal F$ such that $\sum_{n \in \N}a_{f(n)}$ diverges. If $f \in \mathcal F_0$, then $\sum_{n \in \N}a_{p_f(n)}$ diverges also, because this series is the same as $\sum_{n \in \N}a_{f(n)}$, except that it may include some finitely many extra terms at the beginning. As $p_f \in \mathcal C$, we have reached the desired conclusion in this case.

It remains to consider the case $f \in \mathcal F_1$. For this case, we will now, just as in the proof of Theorem~\ref{ssvsrr:thm},  define a sparse subset of $\N$ that is meant to capture the rate at which the series $\sum_{n \in \N}a_{f(n)}$ diverges. We consider three cases:
\begin{itemize}
\item If $\sum_{n \in \N}a_{f(n)} = \infty$, then the partial sums $\sum_{n \leq m}a_{f(n)}$ increase without bound. We may therefore find an increasing sequence $m_1,m_2,m_3,\dots$ of natural numbers such that
$$ \sum_{m_k < n \leq m_{k+1}}a_{f(n)} > 1$$
for all $k \in \N$.
\item If $\sum_{n \in \N}a_{f(n)} = -\infty$, then the partial sums $\sum_{n \leq m}a_{f(n)}$ decrease without bound. We may therefore find an increasing sequence $m_1,m_2,m_3,\dots$ of natural numbers such that
$$ \sum_{m_k < n \leq m_{k+1}}a_{f(n)} < -1$$
for all $k \in \N$.
\item If $\sum_{n \in \N}a_{f(n)}$ diverges by oscillation, then we may find some $c > 0$ such that the partial sums $\sum_{n \leq m}a_{f(n)}$ undergo infinitely many oscillations of size at least $c$. More precisely, we may find an increasing sequence 
$m_1, m_1', m_2, m_2', m_3, m_3', \dots$
of natural numbers such that, for every $k \in \N$,
$$ \sum_{m_k < n \leq m_k'}a_{f(n)} > c,$$
$$ \sum_{m_k' < n \leq m_{k+1}}a_{f(n)} < -c.$$
\end{itemize}

Let $M = \{ m_n : n \in \N \}$. By our choice of $\mathcal B$, we may find some $B_0 \in \mathcal B$ such that $M$ is not sparser than $B_0$. If $B_0 = \{b_1,b_2,b_3,\dots\}$, then let $B = \{b_1+1, b_2+2, b_3+3, \dots\}$, and recall that, by our assumptions on the family $\mathcal B$, $B \in \mathcal B$. We claim that the rearranged series $\sum_{n \in \N}a_{p_{f,B}(n)}$ diverges. As $p_{f,B} \in \mathcal C$, this will suffice to complete the proof.

By the definition of ``sparser than'' there are infinitely many values of $k$ such that the interval $(m_k,m_{k+1}]$ does not contain any members of $B_0$. By our choice of $p_{f,B}$, for each such interval, the terms
$$a_{f(m_k+1)}, a_{f(m_k+2)}, a_{f(m_k+3)}, \dots, a_{f(m_{k+1})}$$
will appear, in the order shown, in the rearranged series $\sum_{n \in \N}a_{p_{f,B}(n)}$. The proof of this is nearly identical to the proof of Claim 2 within the proof of Theorem~\ref{ssvsrr:thm}.

If $\sum_{n \in \N}a_{f(n)} = \infty$, then this observation, together with our choice of the $m_k$, guarantees that the partial sums of the rearranged series $\sum_{n \in \N}a_{p_{f,B}(n)}$ will infinitely often increase by $1$, so that $\sum_{n \in \N}a_{p_{f,B}(n)}$ diverges. Similarly, if $\sum_{n \in A}a_n = -\infty$ then the partial sums of the rearranged series $\sum_{n \in \N}a_{p_{f,B}(n)}$ will infinitely often decrease by $1$, again implying that $\sum_{n \in \N}a_{p_{f,B}(n)}$ diverges. Lastly, if $\sum_{n \in \N}a_{f(n)}$ diverges by oscillation, then there is some $c > 0$ such that the partial sums of the rearranged series $\sum_{n \in \N}a_{p_{f,B}(n)}$ will infinitely often oscillate by $c$, once again implying that $\sum_{n \in \N}a_{p_{f,B}(n)}$ diverges.
\end{pf}

\begin{thm}
$\sr = \min \{\ss,\rr\}$. More specifically,
\begin{align*}
\sr = 
\begin{cases}
\ss < \rr & \text{ if } \ss < \ger b \\
\rr \leq \ss & \text{ if } \ss \geq \ger b.
\end{cases}
\end{align*}
\end{thm}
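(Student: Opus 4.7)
The proof is essentially bookkeeping: the three preceding theorems and the known inequality $\ger b \leq \rr$ (from \cite{rearrangement}) together pin down $\sr$ in each of the possible configurations of $\ss$ versus $\ger b$. My plan is to first establish the unconditional equality $\sr = \min\{\ss,\rr\}$ and then read off the refined form.

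First I would observe that $\sr \leq \min\{\ss,\rr\}$ is immediate from Theorem~\ref{thm:sr1}. For the reverse inequality, I would split into two cases according to how $\sr$ compares with $\ger b$. If $\sr < \ger b$, then Theorem~\ref{thm:sr2} yields $\sr = \ss$, so certainly $\min\{\ss,\rr\} \leq \ss = \sr$. If instead $\sr \geq \ger b$, then Theorem~\ref{thm:sr3} yields $\sr = \rr$, so $\min\{\ss,\rr\} \leq \rr = \sr$. Either way, $\min\{\ss,\rr\} \leq \sr$, proving the first equality.

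For the refined form, I would case-split on the location of $\ger b$ relative to $\ss$ rather than $\sr$. Suppose first that $\ss < \ger b$. Then $\sr \leq \ss < \ger b$ by Theorem~\ref{thm:sr1}, so the hypothesis of Theorem~\ref{thm:sr2} is satisfied and $\sr = \ss$. Moreover, the cited fact $\ger b \leq \rr$ combined with $\ss < \ger b$ gives $\ss < \rr$, as required. Suppose instead that $\ss \geq \ger b$. I would argue by contradiction that $\sr \geq \ger b$: if $\sr < \ger b$ held, Theorem~\ref{thm:sr2} would force $\sr = \ss \geq \ger b > \sr$, an absurdity. Hence $\sr \geq \ger b$, and Theorem~\ref{thm:sr3} supplies $\sr = \rr$; the inequality $\rr = \sr \leq \ss$ then comes directly from Theorem~\ref{thm:sr1}.

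There is no genuine obstacle here beyond making sure the case analysis is exhaustive and that the strict inequality $\ss < \rr$ in the first branch is explicitly justified by $\ger b \leq \rr$ (since the preceding theorems alone only deliver weak inequalities). All the work has already been done in Theorems~\ref{thm:sr1}, \ref{thm:sr2}, and \ref{thm:sr3}; the present theorem is merely their clean packaging.
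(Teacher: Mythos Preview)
Your proposal is correct and follows the same approach as the paper, which simply says the result follows immediately from Theorems~\ref{thm:sr1}, \ref{thm:sr2}, and \ref{thm:sr3}; you have merely made the case analysis explicit. Your observation that the strict inequality $\ss < \rr$ in the first branch requires the additional input $\ger b \leq \rr$ from \cite{rearrangement} is a good one, as the paper's one-line proof does not cite this separately even though it is needed.
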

\begin{pf}
This follows immediately from Theorems \ref{thm:sr1}, \ref{thm:sr2}, and \ref{thm:sr3}.
\end{pf}

As mentioned at the beginning of this section, the inequality $\ss < \ger b$ is consistent by the results in Section~\ref{sec:Laver}. We do not know whether the inequality $\rr < \ss$ is consistent. Thus we know that $\sr < \rr$ is consistent, but we do not know whether $\sr < \ss$ is consistent also.

\end{document}